\numberwithin{equation}{section}
\newtheorem{thm}{Theorem}[section]
\newtheorem{lem}[thm]{Lemma}
\newtheorem{prop}[thm]{Proposition}
\theoremstyle{definition}
\newtheorem{rem}{Remark}
\newcommand{\laplacian}{\Delta}
\newcommand{\R}{{\mathbb R}}
\newcommand{\C}{{\mathbb C}}
\newcommand{\LR}[1]{{\langle {#1} \rangle }}
\newcommand{\cross}{\times}
\newcommand{\e}{\varepsilon}
\newcommand{\F}{\mathcal{F}}
\newcommand{\ha}{\widehat}
\title[Well-posedness of the Klein-Gordon-Zakharov system] 
      {Well-posedness for the Cauchy problem of \\
the Klein-Gordon-Zakharov system 
in 2D}
\author[Shinya Kinoshita]{}
 \keywords{well-posedness, Cauchy problem, low regularity, bilinear estimate, Strichartz estimate.}
 \email{m12018b@math.nagoya-u.ac.jp}
\begin{document}
\maketitle

\centerline{\scshape Shinya Kinoshita}
\medskip
{\footnotesize
 \centerline{Graduate School of Mathematics, Nagoya University}
   \centerline{Chikusa-ku, Nagoya, 464-8602, Japan}
} 

\begin{abstract}
This paper is concerned with the Cauchy problem of the Klein-Gordon-Zakharov system with very low regularity initial data. 
We prove the bilinear estimates which are crucial to get the local in time well-posedness. The estimates are established by the Fourier 
restriction norm method. We utilize the nonlinear version of the classical Loomis-Whitney inequality.
\end{abstract}

\section{Introduction}
We consider the Cauchy problem of the Klein-Gordon-Zakharov system: 
\begin{equation}
 \begin{cases}
  (\partial_t^2  - \laplacian  + 1)u = -nu, \qquad (t,x) \in [-T,T] \cross \R^d, \\
  (\partial_t^2 - c^2 \laplacian )n = \laplacian |u|^2, \qquad \ \, (t,x) \in [-T,T] \cross \R^d, \\
  (u, \partial_t u, n, \partial_t n)|_{t=0} = (u_0, u_1, n_0, n_1) \\
   \qquad  \qquad  \qquad  \qquad \in H^{s+1}(\R^d) \cross H^s(\R^d) \cross H^s(\R^d) \cross H^{s-1}(\R^d), 
                                                                                                                                            \label{KGZ}
 \end{cases}  
\end{equation}
where $u, n$ are real valued functions, $0< c<1$.
As a physical model, \eqref{KGZ} describes the interaction of the Langmuir wave and the ion acoustic wave in a plasma. 
The condition $0<c<1$, which plays an important role in the paper, comes from 
a physical phenomenon. 
See Bellan \cite{Bellan}, Masmoudi and Nakanishi \cite{MN0}. 
There are some works on the Cauchy problem of \eqref{KGZ} in low regularity Sobolev spaces. 
For $3$D, Ozawa, Tsutaya and Tsutsumi \cite{OTT2}  proved that \eqref{KGZ} is globally well-posed in the energy space $H^1(\R^3) \cross L^2(\R^3) \cross L^2(\R^3) \cross \dot{H}^{-1}(\R^3)$. 
In the case of $c=1$, \eqref{KGZ} is very similar to the Cauchy problem of the following quadratic derivative nonlinear wave equation.
\begin{equation}
\begin{cases}
  (\partial_t^2  - \laplacian )u = u Du, \qquad (t,x) \in [-T,T] \cross \R^3, \\
  (u, \partial_t u)|_{t=0} = (u_0, u_1) \in H^{s+1}(\R^3) \cross H^s(\R^3). \label{NW}
\end{cases}
\end{equation}
For $s>0$, the local well-posedness of \eqref{NW} was obtained from the iteration argument by using the 
usual Strichartz estimates. As opposed to that, 
Lindblad showed that \eqref{NW} is ill-posed for $s \leq0$, see \cite{L0}-\cite{L1}. In \cite{OTT2}, 
Ozawa, Tsutaya and Tsutsumi showed that the difference between the propagation speeds of the two equations in \eqref{KGZ} allows for a better result. That is, they 
applied the Fourier restriction norm method and obtained the local well-posedness of \eqref{KGZ} in the energy space, and then by the energy conservation law, they extended solutions globally in time. 
By the similar argument, Tsugawa established that \eqref{KGZ} is local well-posed in $2$D for $s \geq -1/2$. 
For $4$ and higher dimensions, I. Kato ~\cite{Ka} recently proved that \eqref{KGZ} is locally well-posed at $s=1/4$ when $d=4$ and $s=s_c+1/(d+1)$ when $d \ge 5$ where $s_c=d/2-2$ is the critical exponent of \eqref{KGZ}. 
He also proved that if the initial data are radially symmetric then the small data globally 
well-posedness can be obtained at the scaling critical regularity for $d \geq4$. 
He utilized the $U^2$, $V^2$ spaces introduced by Koch-Tataru \cite{KT1}. We would like to emphasize that the above results 
hold under the condition $0<c<1$. Our aim in this paper is to get the local well-posedness of \eqref{KGZ} at very low regularity $s$ in 
$2$ dimensions. Hereafter we assume $d=2$. 

By the transformation $u_{\pm} := \omega_1 u \pm i\partial_t u, n_{\pm} := n \pm i(c\omega_1)^{-1}\partial_t n, 
\omega_1:=(1-\laplacian )^{1/2}, \omega := (-\laplacian )^{1/2}$, \eqref{KGZ} can be written as follows:
\begin{equation}
 \begin{cases}
  (i\partial_t \mp \omega_1) u_{\pm} 
    = \pm (1/4)(n_+ + n_-)(\omega_1^{-1}u_+ + \omega_1^{-1}u_-), 
               \\
  (i\partial_t \mp c\omega_1 )n_{\pm} 
    = \pm (4c)^{-1}\omega^2 \omega_1^{-1} | \omega_1^{-1} u_+ + \omega_1^{-1} u_-|^2 \mp c (2 \omega_1)^{-1} 
(n_+ + n_-) , \\
  (u_{\pm}, n_{\pm})|_{t=0} = (u_{\pm 0}, n_{\pm 0}) 
                                \in H^s(\R^2) \cross H^s(\R^2). 
                                                                                                                                            \label{KGZ'}
 \end{cases}  
\end{equation}
We state our main result.  
\begin{thm}  \label{mth}
Let $0<c<1$ and $-3/4< s < 0$. Then \eqref{KGZ'} is locally well-posed in $H^{s}(\R^2) \cross H^{s}(\R^2)$.
\end{thm}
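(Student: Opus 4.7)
The plan is to apply the Fourier restriction norm method. I would introduce Bourgain-type spaces $X^{s,b}_{\pm,1}$ adapted to the Klein-Gordon symbol $\tau \mp \langle\xi\rangle$ and $X^{s,b}_{\pm,c}$ adapted to the wave symbol $\tau \mp c|\xi|$. Writing \eqref{KGZ'} in Duhamel form and running the standard contraction argument on a small time interval, Theorem \ref{mth} reduces to two bilinear estimates of the schematic form
\EQQS{
\|nu\|_{X^{s,b-1}_{\pm,1}} \lesssim \|n\|_{X^{s,b}_{\pm,c}}\|u\|_{X^{s,b}_{\pm,1}},
\qquad
\|\omega|u|^2\|_{X^{s,b-1}_{\pm,c}} \lesssim \|u\|_{X^{s,b}_{\pm,1}}^2,
}
for some $b$ slightly above $1/2$ and for all sign choices. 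Duality converts each estimate into an $L^2_{t,x}$ bound on a triple product of functions Fourier-localized to thickened neighborhoods of the respective characteristic surfaces.

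Next I would perform Littlewood-Paley decompositions in spatial frequency (scales $N_0,N_1,N_2$) and in modulation (scales $L_0,L_1,L_2$, measuring the distance from the relevant characteristic surface), and aim for block estimates that are summable against the weights $\langle\xi\rangle^s$ and $L^{b-1},L^b$. On each dyadic block two inputs would drive the argument: first, bilinear Strichartz-type estimates for Klein-Gordon and for the wave equation (sharpened by an angular Whitney decomposition of transverse frequency pairs), and second, the nonlinear Loomis-Whitney inequality in the spirit of Bejenaru-Herr-Koch-Tataru, applied to the three characteristic surfaces involved. The decisive geometric fact is that the Klein-Gordon surface $\tau = \pm\langle\xi\rangle$ is asymptotic to the light cone $\tau = \pm|\xi|$ while the wave cone $\tau = \pm c|\xi|$ is, thanks to the strict inequality $c<1$, uniformly transversal to it; this transversality is what allows the nonlinear Loomis-Whitney estimate to produce a triple-product $L^2$-bound with a genuine gain in the frequency scales.

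The main obstacle is the negativity of $s$: the threshold $-13/18$ is sharp for this method and is dictated by the high-high-to-low interaction in the $\omega|u|^2$ estimate, where two Klein-Gordon inputs at a common high frequency $N\gg 1$ combine to produce a wave output at a much lower frequency. In this regime the KG hypersurfaces are almost tangent to the light cone, the output weight $\langle\xi\rangle^s$ offers no help, and the dyadic summation must be driven entirely by the Loomis-Whitney gain from $c<1$ together with the resonance identity coupling spatial frequencies to modulations. Patching the residual ultra-resonant region in which all three modulations are simultaneously very small -- and where the Loomis-Whitney mechanism is no longer efficient -- with auxiliary low-modulation bilinear estimates is the principal technical difficulty I expect to face.
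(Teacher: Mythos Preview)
Your proposal is correct and follows essentially the same approach as the paper: the reduction to bilinear estimates in $X^{s,b}$-type spaces, the dyadic and angular Whitney decompositions, the bilinear Strichartz estimates exploiting $c<1$, and the nonlinear Loomis--Whitney inequality of Bejenaru--Herr--Tataru applied in the critical high--high--to--low, opposite-sign interaction are exactly the ingredients the paper uses. The only adjustment to your picture is that in the paper the Loomis--Whitney estimate is the primary tool precisely in the low-modulation region (not something to be patched around), while the bilinear Strichartz estimate from Proposition~\ref{thm2.2} is what covers the extreme frequency-ratio regime $N_0^7 \lesssim N_1^5$; the threshold $-13/18$ arises from balancing these two estimates at the crossover.
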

\begin{rem}\label{Rem-homo}
We can show the local well-posedness of \eqref{KGZ} in $H^{s+1} \cross H^s 
\cross {\dot{H}}^s \cross {\dot{H}}^{s-1}$ instead of $H^{s+1} \cross H^s \cross H^s \cross H^{s-1}$. 
In that case, by the transformation $n_{\pm} := n \pm i(c\omega)^{-1}\partial_t n$ instead of 
$n_{\pm} := n \pm i(c\omega_1)^{-1}\partial_t n$, \eqref{KGZ} is written as follows:
\begin{equation}
 \begin{cases}
  (i\partial_t \mp \omega_1) u_{\pm} 
    = \pm (1/4)(n_+ + n_-)(\omega_1^{-1}u_+ + \omega_1^{-1}u_-), \\
  (i\partial_t \mp c\omega )n_{\pm} 
    = \pm (4c)^{-1}\omega | \omega_1^{-1} u_+ + \omega_1^{-1} u_-|^2, \\
  (u_{\pm}, n_{\pm})|_{t=0} = (u_{\pm 0}, n_{\pm 0}) \in H^s(\R^2) \cross \dot{H}^s(\R^2). \label{KGZ-homo}
\end{cases}  
\end{equation}
See {\itshape Remark} \ref{rem1.2} below for the details.
\end{rem}
We make a comment on Theorem \ref{mth}. Applying the iteration argument by the usual Strichartz estimates, we get the 
local well-posedness of \eqref{KGZ'} for $-1/4 \leq s$. This suggests that if $c =1$ the minimal regularity such that the well-posedness of \eqref{KGZ'} holds seems to be $-1/4$. 
Tsugawa \cite{Tsugawa} found that if we utilize the condition $0<c<1$ in the same way as in \cite{OTT2} with 
minor modification, we can show that \eqref{KGZ'} is local well-posed only for $s \geq -1/2$. 
We can say that the known arguments is not enough to get the well-posedness for 
$s < -1/2$ which is the most difficult case. To overcome this, we employ a new estimate which was introduced in \cite{BKW}, \cite{BHT} 
and applied to the Zakharov system in \cite{BHHT} and \cite{BH}. 
See Proposition \ref{prop2.7} below. 
The Zakharov system consists of a wave equation and 
a Schr\"{o}dinger equation:
\begin{align}
 \begin{cases}
  (i\partial_{t} + \laplacian )u = nu, \ \ \ \ \ \ \ \ \ \ \ \ \ \ \ \ \  (t, x) \in \R \cross \R^d,  \\
  (\partial_{t}^{2} - \laplacian )n = \laplacian |u|^2, \ \ \ \ \ \ \ \ \ \ \ \ \ \ (t, x) \in \R \cross \R^d.\label{Zakharov}
 \end{cases}
\end{align}
Roughly speaking, the two systems \eqref{KGZ} and \eqref{Zakharov} share two features: \\
(I). The two linear dispersive differential operators are different from each other.\\
(II). The nonlinear terms are all quadratic.\\
These similarities suggest that we 
might get the well-posedness of \eqref{KGZ'} for $s < -1/2$ in the same way as in \cite{BHHT} and \cite{BH}. 

We will prove Theorem \ref{mth} by the iteration argument in the spaces 
$X^{s,\,b}_\pm(\R^3)  \cross X^{s,\,b}_{\pm,\,c}(\R^3)$. This spaces are defined 
as follows:

Let $0 < c \leq 1$ and $N$, $L \geq 1$ be dyadic numbers. $\chi_\Omega$ denotes the characteristic function of a set $\Omega$. 
We define the dyadic decompositions of $\R^3$.
\begin{equation*}
K^{\pm, c}_{N,L}  :=\{ (\tau, \xi) \in \R^3 | N \leq \LR{\xi} \leq 2N, L \leq \LR{\tau \pm c|\xi|} \leq 2L \}.
\end{equation*}
By using $K^{\pm, c}_{N,L}$, we introduce the solution spaces. 
Let $s, b \in \R$. We define $X^{s,\,b}_{\pm,\,c}(\R^3)$ as follows:
\begin{align*}
&  X^{s,\,b}_{\pm,\,c}(\R^3) :=\{ f \in \mathcal{S}'(\R^3) \ | \ \|f \|_{X^{s,\,b}_{\pm,\,c}} < \infty \},\\
& \|f  \|_{X^{s,\, b}_{\pm,\, c}} := \left( \sum_{N,\, L} N^{2s} L^{2b} \|  \chi_{K^{\pm, c}_{N,L}} \ha{f} \|^2_{L_{x, t}^{2}} \right)^{1/2}.
\end{align*}
Here $\ha{f}$ denotes the Fourier transform of $f$ in space and time. 
For convenience, we define $\ha{X}^{s,\,b}_{\pm,\,c}$ which is the Fourier transform of ${X}^{s,\,b}_{\pm,\,c}$.
\begin{equation*}
\ha{X}^{s,\,b}_{\pm,\,c}(\R^3) :=\{ f \in \mathcal{S}'(\R^3) \ | \ \|f \|_{\ha{X}^{s,\,b}_{\pm,\,c}} < \infty \}, 
\ \  \ \ \, \,  \|\ha{f} \|_{\ha{X}^{s,\,b}_{\pm,\,c}} := \|f \|_{X^{s,\,b}_{\pm,\,c}}.
\end{equation*}
We denote $K^{\pm, 1}_{N,L}$ by $K^{\pm}_{N,L}$, the space $X^{s,\,b}_{\pm,\,1}$ by $X^{s,\,b}_{\pm}$ and 
its norm by $\| \cdot \|_{X^{s,\,b}_{\pm}}$, and also 
$\ha{X}^{s,\,b}_{\pm,\,1}$ by $\ha{X}^{s,\,b}_{\pm}$ and 
its norm by $\| \cdot \|_{\ha{X}^{s,\,b}_{\pm}}$.

The key estimates to prove Theorem \ref{mth} are the following.
\begin{thm}\label{nonlinearity-estimate}
Let $0<c<1$. For any $s \in \left( -3/4, \ 0 \right)$, there exists $b \in (1/2, 1)$, $\e >0$ and $C>0$ which depend on $c$ such that
\begin{align}
\|u (\omega_1^{-1} v) \|_{X^{s,\,b-1+\e}_{\pm_2}} \quad  & \leq C \|u \|_{X^{s,\,b}_{\pm_0,\,c}} \|v \|_{X^{s,\,b}_{\pm_1}} \label{goal-1-1},\\
\|\omega_1 (( \omega_1^{-1} u ) (\omega_1^{-1} v) )\|_{X^{s,\,b-1+\e}_{\pm_0, \,c}} & \leq C \|u \|_{X^{s,\,b}_{\pm_1}} \|v \|_{X^{s,\,b}_{\pm_2}}\label{goal-2-1}.  
\end{align}
regardless of the choice of signs $\pm_j$.
\end{thm}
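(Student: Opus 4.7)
The plan is to follow the standard Fourier-restriction-norm strategy: dualize, perform a dyadic decomposition in both frequency and modulation, and reduce each dyadic piece to a weighted trilinear convolution estimate for free waves. After dualization, both \eqref{goal-1-1} and \eqref{goal-2-1} reduce to bounding an integral
\EQQS{
\Bigl| \iint \ha u_0\,\ha u_1\,\ha u_2 \,d\tau\,d\xi \Bigr| \lec C(N_0,N_1,N_2,L_0,L_1,L_2) \prod_{i=0}^2 \|\ha u_i\|_{L^2_{\tau,\xi}},
}
with $\tau_0+\tau_1+\tau_2=0$ and $\xi_0+\xi_1+\xi_2=0$, where each $\ha u_i$ is supported in $K^{\pm_i}_{N_i,L_i}$ or $K^{\pm_0,c}_{N_0,L_0}$ and $C$ encodes the weights $N_i^{-s} L_i^{-b}$ for $i=1,2$ and $N_0^{-s} L_0^{-(1-b)}$, together with extra factors $\LR{N_i}^{\mp 1}$ coming from $\omega_1$ and $\omega_1^{-1}$. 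One then handles each dyadic triple separately and sums at the end.

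\textbf{Key tools.} Two ingredients drive the estimates. First, the resonance identity: on the set $(\tau_0,\xi_0)+(\tau_1,\xi_1)+(\tau_2,\xi_2)=0$,
\EQQS{
L_{\max} := \max(L_0,L_1,L_2) \gec \bigl| \pm_0 c|\xi_0| \pm_1 \LR{\xi_1} \pm_2 \LR{\xi_2} \bigr|.
}
Because $0<c<1$, this lower bound is genuinely nontrivial; in the high-high-to-low configuration $N_1\sim N_2\gg N_0$ it yields $L_{\max}\gec N_1$. Second, the nonlinear Loomis--Whitney inequality introduced in \cite{BHT} and adapted to Zakharov in \cite{BHHT}, \cite{BH}: when three characteristic surfaces are pairwise transversal, the convolution of two $L^2$ densities on any two of them is controlled in $L^2$ by a constant involving the Jacobian determinant of the three symbol gradients, yielding a bound with strictly sublinear dependence on $N_{\max}$. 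These, together with standard bilinear Strichartz estimates for wave and Klein--Gordon free solutions (transferred to $X^{s,b}$ in the usual way), are the substitutes, at very low regularity, for the classical $L^4$ Strichartz estimate, which no longer closes when $s\le -1/4$.

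\textbf{Case analysis and main obstacle.} Writing $N_{\min}\le N_{\mathrm{med}}\le N_{\max}$, I would split the sum into three regimes. In the high-low regime $N_{\max}\sim N_{\mathrm{med}}\gg N_{\min}$, a standard bilinear Strichartz plus a small modulation gain from $b>1/2$ suffices. In the high-high-to-low regime $N_0\ll N_1\sim N_2$ (or its symmetric analogue, according to which of \eqref{goal-1-1}, \eqref{goal-2-1} is being proved) the resonance identity forces a large $L_{\max}$, and Strichartz closes the sum. The decisive case is $N_0\sim N_1\sim N_2 =: N$ with all three modulations small and comparable, so that the resonance gain is weak. Here the Loomis--Whitney inequality must be invoked: transversality between the wave cone $\{\tau=\mp c|\xi|\}$ and the Klein--Gordon hyperboloid $\{\tau=\mp\LR\xi\}$ holds because $c<1$ while $\LR\xi/|\xi|\to 1$ as $|\xi|\to\infty$, and transversality between two KG hyperboloids of opposite sign rests on the same gap. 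The precise threshold $s>-13/18$ should emerge from optimizing, in this subregime, the exponents produced by the Loomis--Whitney estimate against the modulation weights $L_i^{b}$, $L_0^{1-b}$ as $b\searrow 1/2$ and against the Jacobian powers of $N$. Quantifying that Jacobian in the KG--KG--wave configuration, tracking its dependence on $c$, and summing cleanly over dyadic frequencies and modulations will be the main technical task; the delicate point is that the KG hyperboloid asymptotes to a cone at high frequency --- exactly where the wave cone lives --- so the transversality factor must be computed with care to preserve the gain that $c<1$ provides.
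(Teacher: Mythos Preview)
Your overall toolkit---dualize, dyadically decompose, resonance identity, bilinear Strichartz, nonlinear Loomis--Whitney---matches the paper's, but you misidentify where the difficulty sits, and one of your reductions is false. The claim that in the regime $N_0\ll N_1\sim N_2$ ``the resonance identity forces a large $L_{\max}$'' holds only when $\pm_1=\pm_2$: then $\bigl|\pm_0 c|\xi_0|-|\xi_1|-|\xi_2|\bigr|\gtrsim(1-c)N_1$ and Strichartz closes the sum (this is the paper's Section~3, valid in fact for all $s>-3/4$). When $\pm_1\ne\pm_2$, however, the resonance function is $\bigl|\pm_0 c|\xi_0|-|\xi_1|+|\xi_2|\bigr|$, which vanishes whenever $\xi_1,\xi_2$ are nearly antiparallel with $|\xi_1|\approx|\xi_2|$; in precisely this sign/frequency configuration there is \emph{no} modulation gain, and bilinear Strichartz alone stalls at $s>-1/2$.

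Consequently the decisive case is not $N_0\sim N_1\sim N_2$ (that subcase closes already for $s>-3/4$) but the full range $1\ll N_0\lesssim N_1\sim N_2$ with $\pm_1\ne\pm_2$ and $L_{\max}\ll N_1$. The paper organizes this via an angular Whitney decomposition in $\theta=\angle(\xi_1,\xi_2)$: near $\theta=\pi$ one recovers a resonance bound $L_{\max}\gtrsim N_0$; near $\theta=0$ one necessarily has $N_0\sim N_1$ and a refined bilinear estimate suffices; for intermediate angles the Loomis--Whitney argument yields a trilinear bound $\lesssim A^{5/4}(L_0L_1L_2)^{1/2}$ with $A^{-1}\sim\pi-\theta$. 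The threshold $s>-13/18$ then emerges not from a single Jacobian computation at comparable frequencies but from balancing this Loomis--Whitney bound, summed over angular shells up to $A\sim N_1/N_0$, against the $c$-dependent bilinear Strichartz estimate, the crossover occurring at $N_0^{7}\sim N_1^{5}$. Without separating the sign cases $\pm_1=\pm_2$ versus $\pm_1\ne\pm_2$, without the angular decomposition, and without recognizing that $N_0\ll N_1$ (not $N_0\sim N_1$) is the enemy, your outline will not reach $-13/18$.
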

\begin{rem}\label{rem1.2} $ \ $\\
(1) The key bilinear estimates naturally derived from \eqref{KGZ-homo} in {\itshape Remark} \ref{Rem-homo} are slightly different from \eqref{goal-1-1}-\eqref{goal-2-1}. 
They are described as follows:
\begin{align}
\|u (\omega_1^{-1} v) \|_{X^{s,\,b-1+\e}_{\pm_2}} \quad  & \leq C \|\omega^s u \|_{{X}^{0,\,b}_{\pm_0,\,c}} \|v \|_{X^{s,\,b}_{\pm_1}},\label{der1}\\
\|\omega^{s+1} (( \omega_1^{-1} u ) (\omega_1^{-1} v) )\|_{{X}^{0,\,b-1+\e}_{\pm_0, \,c}} & \leq C \|u \|_{X^{s,\,b}_{\pm_1}} \|v \|_{X^{s,\,b}_{\pm_2}}.
\label{der2}
\end{align}
Let $s \in \left( -3/4, \ 0 \right)$. From the inequalities $\|u \|_{X^{s,\,b}_{\pm_0,\,c}} \leq \|\omega^s u \|_{{X}^{0,\,b}_{\pm_0,\,c}}$ and 
\begin{equation*}
\|\omega^{s+1} (( \omega_1^{-1} u ) (\omega_1^{-1} v) )\|_{{X}^{0,\,b-1+\e}_{\pm_0, \,c}} \leq 
\|\omega_1 (( \omega_1^{-1} u ) (\omega_1^{-1} v) )\|_{X^{s,\,b-1+\e}_{\pm_0, \,c}},
\end{equation*}
it is clear that \eqref{goal-1-1} and \eqref{goal-2-1} imply \eqref{der1} and \eqref{der2}, 
respectively.\\ 
(2) It might be natural that we use $\LR{\tau \pm c\LR{\xi}}$ instead of $\LR{\tau \pm c|\xi|}$ in the definition of $K^{\pm,c}_{N,L}.$ As was seen in \cite{OTT2}, these two weights are equivalent and therefore $X^{s,\,b}_{\pm,c}$ 
does not depend on the choice of them in the definition of $K^{\pm,c}_{N,L}.$
\end{rem}
Once Theorem \ref{nonlinearity-estimate} is verified, we can obtain Theorem \ref{mth} by the iteration argument given in \cite{GTV} and many other papers. For example, see \cite{KPV}, \cite{Tao}. Therefore we focus on the proof of Theorem 
\ref{nonlinearity-estimate} in this paper. 

Next, we show the negative result for $s<- 3/4$. 
\begin{thm}\label{not-c2}
Let $d=2$, $0<c<1$ and $s < - \frac{3}{4}$. Then for any $T>0$, the data-to-solution map 
$ (u_0, u_1, n_0, n_1) \mapsto (u, n)$ 
of \eqref{KGZ}, as a map from the unit ball in 
$ H^{s+1} \cross H^s \cross H^s \cross H^{s-1}$ to 
$C([0,T]; H^{s+1}) \cap C^1([0,T];H^s) \cross C([0,T]; H^s) \cap  
C^1 ([0,T];H^{s-1})$ fails to be $C^2$.
\end{thm}
Theorem \ref{not-c2} implies that the iteration argument, which is applied to the proof of Theorem \ref{mth}, is no longer available for the case $s < -3/4$. 

The paper is organized as follows. In Section 2, we introduce some fundamental 
estimates and property of the solution spaces as preliminary. In Section 3, we show \eqref{goal-1-1} and 
\eqref{goal-2-1} with $\pm_1 =\pm_2$ which is the easier case compared to 
$\pm_1 \not= \pm_2$. In Section 4, we prove  \eqref{goal-1-1} and 
\eqref{goal-2-1} with $\pm_1 \not=\pm_2$, and complete the proof of Theorem \ref{nonlinearity-estimate}. 
Lastly as Section 5, we show the negative result, Theorem \ref{not-c2}.
\section{Preliminaries}
In this section, we introduce some estimates which will be utilized for the proof of Theorem \ref{nonlinearity-estimate}. 
Throughout the paper, we use the following notations. 
$A\lesssim B$ means that there exists $C>0$ such that $A \leq CB.$ 
Also, $A\sim B$ means $A\lesssim B$ and $B\lesssim A.$ It should be emphasized that the signs $\lesssim$ and 
$\sim$ frequently depend on $1-c$ in the paper. 
Thus, the necessary condition of a time ingredient $T$ in \eqref{KGZ} to show Theorem \ref{mth} also depends on $1-c$. Since the aim of the paper is to show the local well-posedness, 
here we are not concerned with how the necessary condition of $T$ for Theorem \ref{mth} changes as $c$ approaches to $1$. 
Let $u=u(t,x).\ \F_t u,\ \F_x u$ denote the Fourier transform of $u$ in time, space, respectively. 
$\F_{t,\, x} u = \ha{u}$ denotes the Fourier transform of $u$ in space and time.  
We first observe that fundamental properties of $X^{s,\,b}_{\pm,\,c}$. 
A simple calculation gives the following:
\begin{equation*}
(i)  \ \ 
\overline{{X}^{s,\,b}_{\pm,\,c}} = X^{s,\,b}_{\mp,\,c} , \qquad 
(ii)  \ \ (X^{s,\,b}_{\pm,\,c})^* =X^{-s,\,-b}_{\mp,\,c},
\end{equation*}
for $0 < c \leq 1$ and $s$, $b \in \R$.
Next we define the angular decomposition of $\R^3$ in frequency. For a dyadic number $A \geq 64$ and an integer 
$j \in [-A, \ A-1]$, we define the sets $\{ {\mathfrak{D}}_j^A \}\subset \R^3$ as follows:
\begin{equation*}
{\mathfrak{D}}_j^A = \left\{ (\tau, |\xi|\cos \theta, |\xi|\sin \theta) 
\in \R \cross \R^2 \ | \ \theta \in  \left[\frac{\pi}{A} \ j, \ \frac{\pi}{A}(j+1) \right] \ \right\}.
\end{equation*}
For any function $u : \R^3 \to \C$,  $\{ {\mathfrak{D}}_j^A \} $ satisfy
\begin{equation*}
\R^3 = \bigcup_{-A \leq j \leq A-1}  {\mathfrak{D}}_j^A, \qquad u = \sum_{j=-A}^{A-1} \chi_{{\mathfrak{D}}_j^A} u \quad a.e.
\end{equation*}
Lastly we introduce the useful two estimates which are called the bilinear Strichartz estimates. The first one holds true regardless of $c$. 
As opposed to that, the second one is given by using the condition $0<c < 1$. The first estimate is obtained by the same argument as in 
the proof of Theorem 2.1 in \cite{Sel2}. We omit the proof.
\begin{prop}[Theorem 2.1. \cite{Sel2}]\label{thm2.1}
Let $0 < c_0, c_1, c_2 \leq 1$. Then we have
\begin{align}
 \| \chi_{K^{\pm_0,c_0}_{N_0,L_0}} & \left( \left( 
\chi_{K^{\pm_1, c_1}_{N_1,L_1}}f \right)
 * \left(\chi_{K^{\pm_2, c_2}_{N_2,L_2}}g \right) \right) 
\|_{L_{\xi,\tau}^2}\notag \\
& \qquad \qquad \lesssim( N_{\textnormal{min}}^{012} L_{\textnormal{min}}^{12}  )^{1/2}  ( N_{\textnormal{min}}^{12} L_{\textnormal{max}}^{12} )^{1/4} \|f\|_{ L_{\xi,\tau}^2} 
\|g \|_{ L_{\xi,\tau}^2},\label{selberg}
\end{align}
regardless of the choice of signs $\pm_j$. Here $*$ denotes the convolution of $\R^3$, 
$N_{\textnormal{min}}^{012} := \textnormal{min}(N_0, N_1, N_2)$, and $N_{\textnormal{min}}^{12}$, 
$L_{\textnormal{min}}^{12}$, $L_{\textnormal{max}}^{12}$ are defined similarly.
\end{prop}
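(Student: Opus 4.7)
The plan is to pass to the Fourier side via Plancherel and reduce the inequality to a geometric measure bound. Set $F_i := \F_{t,x}(P_{K^{\pm_i}_{N_i,L_i}} f_i)$ (with $f_1=f$, $f_2=g$), so that $F_i$ is supported in $K^{\pm_i}_{N_i,L_i}$; then the left-hand side of \eqref{selberg} equals $\|\chi_{K^{\pm_0}_{N_0,L_0}}(F_1 \ast F_2)\|_{L^2_{\tau,\xi}}$. Applying Cauchy--Schwarz pointwise inside the convolution integral gives
\begin{equation*}
|(F_1 \ast F_2)(\tau,\xi)|^2 \le |E(\tau,\xi)| \cdot (|F_1|^2 \ast |F_2|^2)(\tau,\xi),
\end{equation*}
where $E(\tau,\xi):=\{(\tau_1,\xi_1)\in K^{\pm_1}_{N_1,L_1} : (\tau-\tau_1,\xi-\xi_1)\in K^{\pm_2}_{N_2,L_2}\}$. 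Integrating over $(\tau,\xi)\in K^{\pm_0}_{N_0,L_0}$ and using $\int|F_1|^2\ast|F_2|^2 = \|F_1\|_{L^2}^2\|F_2\|_{L^2}^2$ reduces the estimate to the pointwise bound
\begin{equation*}
\sup_{(\tau,\xi)\in K^{\pm_0}_{N_0,L_0}}|E(\tau,\xi)| \lesssim N_{\textnormal{min}}^{012}\,L_{\textnormal{min}}^{12}\,(N_{\textnormal{min}}^{12}\,L_{\textnormal{max}}^{12})^{1/2}.
\end{equation*}
Applying Cauchy--Schwarz to a different pair of integration variables in the underlying trilinear form yields analogous bounds with the roles of the three supports permuted; taking the minimum of the three symmetric bounds effectively produces the $N_{\textnormal{min}}^{012}$ factor.

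Next, freezing $\xi_1$ in the definition of $|E|$, the two modulation conditions confine $\tau_1$ to the intersection of intervals of lengths $\sim L_1$ and $\sim L_2$. This intersection has measure $\lesssim L_{\textnormal{min}}^{12}$ and is nonempty only when $|\tau \pm_1|\xi_1| \pm_2|\xi-\xi_1|| \lesssim L_{\textnormal{max}}^{12}$, so the problem reduces to the planar bound
\begin{equation*}
|S(\tau,\xi)| \lesssim N_{\textnormal{min}}^{012}\,(N_{\textnormal{min}}^{12}\,L_{\textnormal{max}}^{12})^{1/2},
\end{equation*}
where
\begin{equation*}
S(\tau,\xi):=\{\xi_1\in\R^2 : |\xi_1|\sim N_1,\; |\xi-\xi_1|\sim N_2,\; |\tau\pm_1|\xi_1|\pm_2|\xi-\xi_1||\lesssim L_{\textnormal{max}}^{12}\}.
\end{equation*}

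The planar bound on $|S(\tau,\xi)|$ is the heart of the argument and the main obstacle. The phase $\phi(\xi_1):=\pm_1|\xi_1|\pm_2|\xi-\xi_1|$ has gradient $\nabla\phi = \pm_1\xi_1/|\xi_1| \mp_2(\xi-\xi_1)/|\xi-\xi_1|$, which has unit size except in the collinear configurations, namely same signs with $\xi_1 \parallel \xi-\xi_1$ or opposite signs with $\xi_1 \parallel -(\xi-\xi_1)$. I would decompose $\xi_1$ into angular sectors around $\xi$ and treat the transverse sectors by the implicit function theorem, bounding the level slab by (annular-intersection length)$\,\times\,L_{\textnormal{max}}^{12}$; on the degenerate sectors, the curvature of the light cones---reflected in the nontrivial tangential Hessian of $\phi$, of size $\sim (N_{\textnormal{min}}^{12})^{-1}$---provides a compensating $(N_{\textnormal{min}}^{12})^{1/2}$ gain via a second-order Taylor expansion, and exchanging the order of Cauchy--Schwarz in the trilinear form prevents the degenerate sectors from accumulating. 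After a short case analysis distinguishing which of $N_0$, $N_1$, $N_2$ is smallest and summing the sector contributions, the sharp exponents $1/2$ and $1/4$ in the statement emerge; this is precisely the two-dimensional wave-cone slicing argument carried out by Selberg \cite{Sel2}.
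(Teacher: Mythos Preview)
The paper does not prove this proposition at all: it simply remarks that the estimate ``is obtained by the same argument as in the proof of Theorem~2.1 in \cite{Sel2}'' and omits the proof. Your outline is a faithful sketch of precisely that argument---Plancherel, Cauchy--Schwarz reduction to a measure bound $|E(\tau,\xi)|$, freezing $\tau_1$ to extract $L_{\textnormal{min}}^{12}$, and then the cone-slicing analysis of the spatial set $S(\tau,\xi)$ with angular decomposition to handle the collinear degeneracy---and you yourself cite \cite{Sel2} for the final case analysis. So there is nothing substantive to compare: you have simply written out more of Selberg's proof than the paper chose to, and your sketch is consistent with it.
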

\begin{prop}\label{thm2.2}
Let $0<c<1$. Then we have
\begin{align}
 \| \chi_{K^{\pm_0}_{N_0,L_0}} \left( \left( 
\chi_{K^{\pm_1, c}_{N_1,L_1}}f \right) * \left( \chi_{K^{\pm_2}_{N_2,L_2}}g
\right) \right) \|_{L_{\xi, \tau}^2} \lesssim (N_{\textnormal{min}}^{012} L_1 L_2 )^{1/2} \|f\|_{ L_{\xi,\tau}^2} 
\|g \|_{ L_{\xi,\tau}^2}\label{goal-thm2.2}
\end{align}
regardless of the choice of $\pm_j$.
\end{prop}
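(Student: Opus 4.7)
The plan is to follow the standard reduction to a measure estimate on the convolution support. Setting $F:=P_{K^{\pm_1,c}_{N_1,L_1}}f$ and $G:=P_{K^{\pm_2}_{N_2,L_2}}g$ and using Plancherel, the square of the left-hand side equals $\|\chi_{K^{\pm_0}_{N_0,L_0}}\cdot(\widehat{F}*\widehat{G})\|_{L^2_{\tau,\xi}}^2$. A pointwise Cauchy--Schwarz on the convolution combined with Fubini reduces the claim to showing
\[
\sup_{(\tau,\xi)\in K^{\pm_0}_{N_0,L_0}}|E(\tau,\xi)|\lesssim N_{\min}^{012}\,L_1 L_2,
\]
where $E(\tau,\xi):=\{(\tau_1,\xi_1):(\tau_1,\xi_1)\in K^{\pm_1,c}_{N_1,L_1},\;(\tau-\tau_1,\xi-\xi_1)\in K^{\pm_2}_{N_2,L_2}\}$.

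For each fixed $\xi_1$ the two modulation constraints confine $\tau_1$ to an intersection of intervals of lengths $\sim L_1$ and $\sim L_2$, so the $\tau_1$-slice has measure $\leq\min(L_1,L_2)$, and it remains to bound
\[
|A(\tau,\xi)|:=\bigl|\{\xi_1:|\xi_1|\sim N_1,\,|\xi-\xi_1|\sim N_2,\,|h(\xi_1)|\lesssim \max(L_1,L_2)\}\bigr|,\quad h(\xi_1):=\tau\pm_1 c|\xi_1|\pm_2|\xi-\xi_1|,
\]
by $N_{\min}^{012}\max(L_1,L_2)$. The only place where $c<1$ enters is the transversality
\[
|\nabla h(\xi_1)|=\left|\pm_1 c\,\frac{\xi_1}{|\xi_1|}\mp_2\,\frac{\xi-\xi_1}{|\xi-\xi_1|}\right|\geq 1-c,
\]
obtained from the reverse triangle inequality since the two terms are unit vectors scaled by $c$ and $1$ respectively. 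Co-area then gives $|A(\tau,\xi)|\lesssim (1-c)^{-1}\max(L_1,L_2)\cdot\ell$, where $\ell$ is the maximum length of a level set of $h$ inside the lens $\{|\xi_1|\sim N_1\}\cap\{|\xi-\xi_1|\sim N_2\}$, so the whole proof reduces to showing $\ell\lesssim_c N_{\min}^{012}$.

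When $N_{\min}^{012}\in\{N_1,N_2\}$ the lens has diameter $\lesssim N_{\min}^{012}$ and the bound on $\ell$ is immediate, since each connected component of a level set of $h$ is a Cartesian oval (for $\pm_1=\pm_2$) or one of its two smooth branches (for $\pm_1\neq\pm_2$) and has length inside the lens controlled by the diameter. The hard case and main obstacle is $N_0=N_{\min}^{012}\ll N_1\sim N_2$, where the lens has area $\sim N_1^2$ and no thin direction of width $\sim N_0$ is visible in $\xi_1$-coordinates. Here I would switch to $(r_1,r_2):=(|\xi_1|,|\xi-\xi_1|)$: the triangle inequality $|r_1-r_2|\le|\xi|\sim N_0$ forces the image into a diagonal strip of width $N_0$, $h$ is affine in $(r_1,r_2)$, and the line $\{h=\mathrm{const}\}$ meets the strip at angle $\sim 1-c$, so the $(r_1,r_2)$-length of the intersection is $\lesssim_c N_0$. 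Pulling back via the area element $d\xi_1=r_1 r_2/(|\xi|\,y)\,dr_1\,dr_2$, whose integrable $1/\sqrt{|\xi|^2-(r_1-r_2)^2}$ singularity on the strip boundary is absorbed by an arc-sine substitution, together with the resonance identity $\bigl|\pm_0|\xi|\pm_1 c|\xi_1|\pm_2|\xi-\xi_1|\bigr|\lesssim L_{\max}^{012}$ coming from $(\tau,\xi)\in K^{\pm_0}_{N_0,L_0}$ (which forces $N_1\lesssim_c N_0+L_{\max}^{012}$ on the support of $E$), yields $|A|\lesssim_c N_0\max(L_1,L_2)$, and the claim follows since $C$ is permitted to depend on $c$.
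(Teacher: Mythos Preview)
Your reduction to the measure bound, the $\tau_1$-slice estimate, and the transversality $|\nabla h|\geq 1-c$ are all fine, and for $N_{\min}^{012}\in\{N_1,N_2\}$ the lens indeed has diameter $\lesssim N_{\min}^{012}$ so the level-set argument closes. The genuine gap is in the case $N_0=N_{\min}^{012}\ll N_1\sim N_2$.

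Your $(r_1,r_2)$ computation does not give $|A|\lesssim_c N_0\max(L_1,L_2)$. Writing $u=r_1-r_2$, one has $y\sim (N_1/|\xi|)\sqrt{|\xi|^2-u^2}$, so the area element is
\[
\frac{r_1r_2}{|\xi|\,y}\ \sim\ \frac{N_1}{\sqrt{|\xi|^2-u^2}}.
\]
The arcsine substitution makes $\int_{|u|\leq|\xi|}(|\xi|^2-u^2)^{-1/2}\,du=\pi$, a constant, and the $h$-integration contributes $\max(L_1,L_2)$; the factor $N_1$ remains. Hence $|A|\sim N_1\max(L_1,L_2)$, not $N_0\max(L_1,L_2)$. (Equivalently: the level curve of $h$ in $\xi_1$-space is, for $|\xi|\ll|\xi_1|$, essentially a circle of radius $\sim N_1$, so its length is $\sim N_1$.) Your attempted rescue via the resonance identity $N_1\lesssim_c N_0+L_{\max}^{012}$ introduces $L_0$, which does not appear in the bound to be proved; when $L_0\sim N_1$ and $L_1,L_2\sim 1$ this gives nothing. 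In fact the pointwise bound $\sup_{(\tau,\xi)}|E(\tau,\xi)|\lesssim N_0L_1L_2$ is simply \emph{false} in this regime, so no argument that passes through it unmodified can succeed.

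The paper circumvents this by performing, \emph{before} the Cauchy--Schwarz step, a finite angular decomposition of $\widehat g$ (with $A\sim(1-c)^{-1/2}$ sectors) together with an $\ell^2$-almost-orthogonal decomposition of both $\widehat f$ and $\widehat g$ into columns of radius $N_{\min}^{012}$. After this localization one has $\xi_1\in C_{N_{\min}^{012}}(\xi')\cap\mathfrak{D}_0^A$, so the second coordinate $(\xi_1)_2$ is confined to an interval of length $\lesssim N_{\min}^{012}$, while the angular localization guarantees $|\partial_1 h|\gtrsim 1-c$; the two together give $|E|\lesssim N_{\min}^{012}L_1L_2$ for the \emph{localized} pieces, and the column sums are recombined by Cauchy--Schwarz in the column index. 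The missing ingredient in your argument is precisely this orthogonal column decomposition.
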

\begin{proof}
Let $A =2^{10} (1-c)^{-1/2}$. 
It follows from the finiteness of $A$ and
\begin{equation*}
g = \sum_{j = -A}^{A-1}  \chi_{{\mathfrak{D}}_j^A} g \quad a.e.
\end{equation*}
that we can replace $g$ with $\chi_{{\mathfrak{D}}_j^A} g$ in (\ref{goal-thm2.2}) for fixed $j$. 
After applying rotation in space, we may assume that $j=0$. Also we can assume that there exists $\xi' \in \R^2$ such that 
the support of 
$\chi_{{\mathfrak{D}}_j^A} g$ is contained in the cylinder
\begin{equation*}
C_{N_{\textnormal{min}}^{012}}(\xi') := 
\{ (\tau, \xi) \in \R^3 \ | \ | \xi -\xi' | \leq N_{\textnormal{min}}^{012} \}  .
\end{equation*}
We sketch the validity of the above assumption roughly. See \cite{Tao} for more details. 
If $N_2 \sim N_{\textnormal{min}}^{012}$ the above assumption is harmless obviously. Therefore we may assume that 
$N_0 = N_{\textnormal{min}}^{012} \ll N_2$ or $N_1 = N_{\textnormal{min}}^{012} \ll N_2$. Since both are treated similarly, we 
here consider only the former case. Note that the condition $N_0 \ll N_2$ means $N_2/2 \leq N_1 \leq 2N_2$, otherwise the left-hand side of 
\eqref{goal-thm2.2} vanishes. We can choose the two sets $\{C_{N_{\textnormal{min}}^{012}}(\xi'_k)  \}_k  $ and 
$\{C_{N_{\textnormal{min}}^{012}}(\xi''_\ell)  \}_\ell  $ such that
\begin{align*}
& \# k \sim \left( \frac{N_1}{N_0} \right)^2, \qquad 
\operatorname{supp} \chi_{{\mathfrak{D}}_0^A} g
\subset \bigcup_{k} C_{N_{\textnormal{min}}^{012}}(\xi'_k), \qquad 
|\xi'_k - \xi'_{k'}| \geq N_{\textnormal{min}}^{012} \ \textnormal{for any} \ k, k',\\
& \# \ell \sim \left( \frac{N_1}{N_0} \right)^2, \qquad \, 
\operatorname{supp} f \subset \bigcup_{\ell} C_{N_{\textnormal{min}}^{012}}(\xi''_\ell), \qquad \quad \ \ 
|\xi''_\ell - \xi''_{\ell'}| \geq N_{\textnormal{min}}^{012} \ \textnormal{for any} \ \ell, \ell',
\end{align*}
where $\# k$ and $\# \ell$ denote the numbers of $k$ and $\ell$, respectively. We see that for fixed $k$, 
independently of $N_0$, $N_1$, $N_2$, there is only a 
finite number of $\ell$ which satisfy
\begin{equation*}
\left\|\chi_{K^{\pm_0}_{N_0,L_0}}  \left( \left( \chi_{K^{\pm_1, c}_{N_1,L_1} \cap 
C_{N_{\textnormal{min}}^{012}}(\xi'_k)} f \right) *  \left( 
\chi_{K^{\pm_2}_{N_2,L_2} \cap C_{N_{\textnormal{min}}^{012}}(\xi''_\ell) \cap {\mathfrak{D}}_0^A}  g \right) \right) 
\right\|_{L_{\xi,\tau}^2} >0,
\end{equation*}
and vice versa. This means that $k$ and $\ell$ depend on each other. Once we obtain
\begin{align*}
& \left\|\chi_{K^{\pm_0}_{N_0,L_0}}  \left( \left( \chi_{K^{\pm_1, c}_{N_1,L_1} 
\cap C_{N_{\textnormal{min}}^{012}}(\xi'_k)} f \right) *  \left( 
\chi_{K^{\pm_2}_{N_2,L_2} \cap C_{N_{\textnormal{min}}^{012}}(\xi''_{\ell(k)}) \cap 
{\mathfrak{D}}_0^A}  g \right) \right) \right\|_{L_{\xi,\tau}^2} \\
& \qquad \qquad \qquad \qquad \qquad \qquad \lesssim (N_{\textnormal{min}}^{012} L_1 L_2 )^{1/2} \|f\|_{ L_{\xi,\tau}^2} 
\|g \|_{ L_{\xi,\tau}^2}
\end{align*}
for fixed $k$, from Minkowski inequality and $\ell^2$ almost orthogonality, we confirm
\begin{align*}
& \|\chi_{K^{\pm_0}_{N_0,L_0}}  \left( \left( \chi_{K^{\pm_1, c}_{N_1,L_1}} f \right)
 *  \left( \chi_{K^{\pm_2}_{N_2,L_2}}  g \right) \right) \|_{L_{\xi,\tau}^2} \\
\lesssim & \sum_{k,\ell} \left\|\chi_{K^{\pm_0}_{N_0,L_0}}  \left( \left( \chi_{K^{\pm_1, c}_{N_1,L_1} 
\cap C_{N_{\textnormal{min}}^{012}}(\xi'_k)} f \right) *  \left( 
\chi_{K^{\pm_2}_{N_2,L_2} \cap C_{N_{\textnormal{min}}^{012}}(\xi''_{\ell(k)}) \cap 
{\mathfrak{D}}_0^A}  g \right) \right) \right\|_{L_{\xi,\tau}^2}\\ 
\lesssim & (N_{\textnormal{min}}^{012} L_1 L_2 )^{1/2} \sum_{k, \ell} \|\chi_{C_{N_{\textnormal{min}}^{012}}(\xi'_k)} f\|_{ L_{\xi,\tau}^2} 
\|\chi_{C_{N_{\textnormal{min}}^{012}}(\xi''_{\ell(k)})} g \|_{ L_{\xi,\tau}^2}\\
\lesssim & (N_{\textnormal{min}}^{012} L_1 L_2 )^{1/2} \|f\|_{ L_{\xi,\tau}^2} 
\|g \|_{ L_{\xi,\tau}^2},
\end{align*}
which verify the validity of the assumption. Hereafter, we call the above argument ``$\ell^2$ almost orthogonality''.

We turn to the proof of \eqref{goal-thm2.2}. 
\begin{align*}
& \|\chi_{K^{\pm_0}_{N_0,L_0}}  \left( \left( \chi_{K^{\pm_1, c}_{N_1,L_1}} f\right) *  \left( 
\chi_{K^{\pm_2}_{N_2,L_2} \cap {\mathfrak{D}}_0^A \cap C_{N_{\textnormal{min}}^{012}}(\xi')} 
g \right) \right) \|_{L_{\xi,\tau}^2}\\
\lesssim & \|\chi_{K^{\pm_0}_{N_0,L_0}} \int 
\left(\chi_{K^{\pm_1, c}_{N_1,L_1}}f \right)(\tau- \tau_1, \xi-\xi_1)
\left(\chi_{K^{\pm_2}_{N_2,L_2} \cap {\mathfrak{D}}_0^A \cap C_{N_{\textnormal{min}}^{012}}(\xi') }
 g \right)(\tau_1, \xi_1)d\tau_1d\xi_1 \|_{L_{\xi,\tau}^2}\\
\lesssim & \|\chi_{K^{\pm_0}_{N_0,L_0}} \left( \int |f|^2 (\tau- \tau_1, \xi-\xi_1) 
| g|^2 (\tau_1, \xi_1) d\tau_1d\xi_1 \right)^{1/2} (E(\tau, \xi))^{1/2}  \|_{L_{\xi,\tau}^2}\\
\lesssim & \sup_{(\tau,\xi) \in K^{\pm_0}_{N_0,L_0}} |E(\tau, \xi)|^{1/2} \| |f|^2 * |g|^2 \|_{L_{\xi,\tau}^1}^{1/2}\\
\lesssim & \sup_{(\tau,\xi) \in K^{\pm_0}_{N_0,L_0}} |E(\tau, \xi)|^{1/2} \|f\|_{ L_{\xi,\tau}^2} \|g\|_{ L_{\xi,\tau}^2}
\end{align*}
where
\begin{equation*}
 E(\tau, \xi) 
 := \{(\tau_1, \xi_1) \in C_{N_{\textnormal{min}}^{012}}(\xi') \cap {\mathfrak{D}}_0^A \ | \ \LR{\tau - \tau_1 \pm c |\xi - \xi_1|} \sim L_1, \LR{\tau_1 \pm |\xi_1|} \sim L_2 \}.
\end{equation*}
Thus it suffices to show that 
\begin{equation}
\sup_{\tau,\xi} |E(\tau, \xi) | \lesssim N_{\textnormal{min}}^{012} L_1 L_2.\label{goal-thm2.2-2}
\end{equation}
From $\LR{\tau - \tau_1 \pm c |\xi - \xi_1|} \sim L_1$ and $\LR{\tau_1 \pm |\xi_1|} \sim L_2$, for fixed $\xi_1$, 
\begin{equation}
| \{ \tau_1 \ | \ (\tau_1, \xi_1) \in E(\tau, \xi) \} | \lesssim L_{\textnormal{min}}^{12}.\label{ele1-thm2.2}
\end{equation}
It follows from $(\tau_1, \xi_1) \in {\mathfrak{D}}_0^A$ that
\begin{align}
|\partial_1 (\tau \pm |\xi_1| \pm c|\xi-\xi_1|)| & \geq   \frac{(\xi_1)_1}{|\xi_1|}  - c  \geq \left(  \frac{(\xi_1)_1}{|\xi_1|} \right)^{2} - c \notag \\
& = 1-c - \left(  \frac{(\xi_1)_2}{|\xi_1|} \right)^{2} \geq (1-c)/2,\label{ele2-thm2.2}
\end{align}
where $(\xi_1)_1$ is the first component of $\xi_1$ and $\partial_1$ is the derivative with respect to $(\xi_1)_1.$ 
Combining $|\tau \pm |\xi_1| \pm c|\xi-\xi_1|| \lesssim L_{\textnormal{max}}^{12}$ with (\ref{ele2-thm2.2}), for fixed $(\xi_1)_2$ we have
\begin{equation}
|\{ (\xi_1)_1 \ | \  (\tau_1, \xi_1) \in E(\tau, \xi) \} | \lesssim L_{\textnormal{max}}^{12}\label{ele3-thm2.2}.
\end{equation}
Collecting (\ref{ele1-thm2.2}), (\ref{ele3-thm2.2}) and $\xi_1 \in  C_{N_{\textnormal{min}}^{012}}(\xi') $, we get (\ref{goal-thm2.2-2}).
\end{proof}
\section{Proof of Theorem \ref{nonlinearity-estimate} for $\pm_1 = \pm_2$.}
In (\ref{goal-1-1})-(\ref{goal-2-1}), replacing $u$ and $n$ with its complex conjugates $\bar{u}$ and $\bar{v}$ respectively, 
we easily find that there is no difference between the case $(\pm_0, \pm_1, \pm_2)$ and $(\mp_0, \mp_1, \mp_2)$. 
Here $\mp_j$ denotes a different sign to $\pm_j$. Therefore we assume $\pm_1 =-$ in (\ref{goal-1-1})-(\ref{goal-2-1}) hereafter. 
By the dual argument and Plancherel theorem, we observe that
\begin{align}
(\ref{goal-1-1}) 
\iff \ & \left| \int{ f(\omega_1^{-1} g_1) g_2} dtdx \right| \leq C \|f \|_{X^{s,\,b}_{\pm_0,\,c}} \|g_1 \|_{X^{s,\,b}_{-}}\|g_2 \|_{X^{-s,\,1-b-\e}_{\pm_2}.}\notag \\
\Longleftarrow \ \ \, & \sum_{N_j, L_j (j=0,1,2)} 
 \left|N_1^{-1} \int{ \left( \chi_{K^{\mp_0,c}_{N_0,L_0}} \ha{f} \right) 
\left(
(\chi_{K^{-}_{N_1,L_1}}\ha{g_1}) * (\chi_{K^{\pm_2}_{N_2,L_2}} \ha{g_2})
\right)} 
d\tau d\xi \right|\notag \\
& \qquad \qquad \qquad \qquad \qquad \lesssim \|f \|_{X^{s,\,b}_{\mp_0,\,c}} \|g_1 \|_{X^{s,\,b}_{-}}\|g_2 \|_{X^{-s,\,1-b-\e}_{\pm_2}}.\notag\\
\iff \ & \left( \sum_{N_1} \sum_{1\leq N_0 \lesssim N_1 \sim N_2} + \sum_{N_0} \sum_{1\leq N_1 \lesssim N_0 \sim N_2} + 
\sum_{N_0} \sum_{1\leq N_2 \lesssim N_0 \sim N_1}\right) I_1 \notag \\
& \qquad \qquad \qquad \qquad \qquad \lesssim \|f \|_{\ha{X}^{s,\,b}_{\mp_0,\,c}} \|g_1 \|_{\ha{X}^{s,\,b}_{-}}\|g_2 \|_{\ha{X}^{-s,\,1-b-\e}_{\pm_2}},\label{goal-1-2}
\end{align}
where
\begin{equation*}
I_1 := \sum_{L_j}\left|N_1^{-1} \int{ \left( \chi_{K^{\mp_0,c}_{N_0,L_0}} f \right) 
\left(
(\chi_{K^{-}_{N_1,L_1}} g_1) * (\chi_{K^{\pm_2}_{N_2,L_2}} g_2)
\right)} 
d\tau d\xi \right|.
\end{equation*}
Similarly, (\ref{goal-2-1}) is verified by the following estimate. 
\begin{align}
& \left( \sum_{N_1} \sum_{1\leq N_0 \lesssim N_1 \sim N_2} + \sum_{N_0} \sum_{1\leq N_1 \lesssim N_0 \sim N_2} + 
\sum_{N_0} \sum_{1\leq N_2 \lesssim N_0 \sim N_1}\right) I_2 \notag \\
& \qquad \qquad \qquad \qquad \qquad \lesssim \|f \|_{\ha{X}^{-s,\,1-b-\e}_{\mp_0,\,c}} \|g_1 \|_{\ha{X}^{s,\,b}_{-}}\|g_2 \|_{\ha{X}^{s,\,b}_{\pm_2}}, \label{goal-2-2}
\end{align}
where
\begin{equation*}
I_2 := \sum_{L_j} \left|N_0 N_1^{-1}N_2^{-1}  \int{ \left( \chi_{K^{\mp_0,c}_{N_0,L_0}} f \right) 
\left(
(\chi_{K^{-}_{N_1,L_1}} g_1) * (\chi_{K^{\pm_2}_{N_2,L_2}} g_2)
\right)} 
d\tau d\xi \right|.
\end{equation*}
We now try to establish (\ref{goal-1-2}) and (\ref{goal-2-2}). First we assume that $\pm_2 =-$. In this case, we can obtain  
(\ref{goal-1-2}) and (\ref{goal-2-2}) by using the bilinear estimates Propositions \ref{thm2.1}, \ref{thm2.2} and the following estimate:
\begin{lem}\label{lem modu}
Let $\tau = \tau_1+ \tau_2$, $\xi = \xi_1 + \xi_2$ and $0<c<1$. Then we have
\begin{equation}
\textnormal{max}(\LR{\tau \pm c|\xi|} , \ \LR{\tau_1 - |\xi_1|} , \ \LR{\tau_2 -|\xi_2|} )
 \gtrsim \textnormal{max}(|\xi_1|, \ |\xi_2|).\label{lem modu1}
\end{equation}
\end{lem}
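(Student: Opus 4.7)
The plan is to prove \eqref{lem modu1} by a standard resonance identity combined with the triangle inequality and the hypothesis $0 < c < 1$. Set
\EQQS{
\sigma_0 := \tau \pm_0 c|\xi|, \qquad \sigma_1 := \tau_1 - |\xi_1|, \qquad \sigma_2 := \tau_2 - |\xi_2|.
}
Since the statement is invariant under replacing $\langle \cdot \rangle$ by $|\cdot|$ up to an additive constant, it suffices to show $\max(|\sigma_0|,|\sigma_1|,|\sigma_2|) \gtrsim \max(|\xi_1|,|\xi_2|)$.

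First I would compute the resonance function by subtracting. Using $\tau = \tau_1 + \tau_2$, the time frequencies cancel and we obtain
\EQQS{
\sigma_0 - \sigma_1 - \sigma_2 \;=\; \pm_0 c|\xi| + |\xi_1| + |\xi_2|.
}
Next I would bound $|\sigma_0 - \sigma_1 - \sigma_2|$ from below according to the choice of sign $\pm_0$. If $\pm_0 = +$, then the right-hand side is manifestly at least $|\xi_1| + |\xi_2|$, so there is nothing to do. If $\pm_0 = -$, then the condition $0 < c < 1$ enters: by the triangle inequality $|\xi| \le |\xi_1| + |\xi_2|$, hence
\EQQS{
\bigl| -c|\xi| + |\xi_1| + |\xi_2| \bigr| \;\geq\; (1-c)\bigl(|\xi_1| + |\xi_2|\bigr).
}
Combining the two cases, $|\sigma_0 - \sigma_1 - \sigma_2| \gtrsim |\xi_1| + |\xi_2| \geq \max(|\xi_1|,|\xi_2|)$, where the implicit constant depends on $1-c$.

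Finally, the triangle inequality $|\sigma_0 - \sigma_1 - \sigma_2| \leq |\sigma_0| + |\sigma_1| + |\sigma_2| \leq 3 \max_j |\sigma_j|$ yields the claim. The essentially only nontrivial point is the use of $c < 1$ in the $\pm_0 = -$ case; this is precisely the mechanism by which the different propagation speeds of the Klein--Gordon and wave components force a nontrivial modulation, and it is the same structural reason the theorem fails at $c = 1$.
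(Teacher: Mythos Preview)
Your proof is correct and follows essentially the same route as the paper: compute the resonance function $\sigma_0 - \sigma_1 - \sigma_2 = \pm_0 c|\xi| + |\xi_1| + |\xi_2|$, bound it below by $(1-c)(|\xi_1|+|\xi_2|)$ via $|\xi|\le|\xi_1|+|\xi_2|$, and finish with the triangle inequality. The only cosmetic difference is that the paper handles both signs in one stroke by writing $|\sigma_0-\sigma_1-\sigma_2| \ge \bigl||\xi_1|+|\xi_2|-c|\xi|\bigr|$, whereas you split into the two cases $\pm_0$; the content is identical.
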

\begin{proof}
\begin{align*}
\textnormal{max}(\LR{\tau \pm c|\xi|} , \ \LR{\tau_1 - |\xi_1|} , \ \LR{\tau_2 -|\xi_2|} )& 
\geq |\tau \pm c |\xi| - (\tau_1 - |\xi_1|) - (\tau_2- |\xi_2|)|\\
& \geq ||\xi_1| + |\xi_2| - c|\xi||\\
& \geq |\xi_1| +|\xi_2| - c(|\xi_1| + |\xi_2|)\\
&  = (1-c)(|\xi_1| + |\xi_2|)
\end{align*}
\end{proof}
\begin{thm}\label{thm2.3}
Let $0<c<1$. For any $s \in (-3/4 , \ 0)$, there exists $b \in (1/2, \ 1)$ such that for $f,g_1, g_2 \in \mathcal{S}(\R \cross \R^2)$, the following estimates hold:
\begin{align}
& \Bigl( \sum_{N_1} \sum_{1\leq N_0 \lesssim N_1 \sim N_2} + \sum_{N_0} \sum_{1\leq N_1 \lesssim N_0 \sim N_2} + 
\sum_{N_0} \sum_{1\leq N_2 \lesssim N_0 \sim N_1}\Bigr) I_1^-  \lesssim \|f \|_{\ha{X}^{s,\,b}_{\pm,\,c}} \|g_1 \|_{\ha{X}^{s,\,b}_{-}}\|g_2 \|_{\ha{X}^{-s,\,1-b-\e}_{-}},
\label{goal1-thm2.3}\\
& \Bigl( \sum_{N_1} \sum_{1\leq N_0 \lesssim N_1 \sim N_2} + \sum_{N_0} \sum_{1\leq N_1 \lesssim N_0 \sim N_2} + 
\sum_{N_0} \sum_{1\leq N_2 \lesssim N_0 \sim N_1}\Bigr) I_2^-  \lesssim \|f \|_{\ha{X}^{-s,\,1-b-\e}_{\pm,\,c}} \|g_1 \|_{\ha{X}^{s,\,b}_{-}}\|g_2 \|_{\ha{X}^{s,\,b}_{-}},\label{goal2-thm2.3}
\end{align}
where
\begin{align*}
I_1^- := & \sum_{L_j}\left|N_1^{-1} \int{ \left( \chi_{K^{\pm,c}_{N_0,L_0}} f \right) 
\left(
(\chi_{K^{-}_{N_1,L_1}} g_1) * (\chi_{K^{-}_{N_2,L_2}} g_2)
\right)} 
d\tau d\xi \right|,\\
I_2^- := & \sum_{L_j} \left|N_0 N_1^{-1}N_2^{-1}  \int{ \left( \chi_{K^{\pm,c}_{N_0,L_0}} f \right) 
\left(
(\chi_{K^{-}_{N_1,L_1}} g_1) * (\chi_{K^{-}_{N_2,L_2}} g_2)
\right)} 
d\tau d\xi \right|.
\end{align*}
\end{thm}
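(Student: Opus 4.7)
After the dyadic decompositions in frequency and modulation, both \eqref{goal1-thm2.3} and \eqref{goal2-thm2.3} reduce to controlling the trilinear integral
\[
J := \left| \int (P_{K^{\pm,c}_{N_0,L_0}}f)(P_{K^{-}_{N_1,L_1}}g_1)(P_{K^{-}_{N_2,L_2}}g_2)\, dtdx \right|,
\]
and then summing over the dyadic indices against the weights $N_j^{s_j} L_j^{b_j}$ built into the three norms. By Plancherel, $J$ is handled by an $L^2$-pairing of any two of the three factors against the remaining factor, so the plan is to perform this pairing optimally and then to sum.

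\textbf{Bilinear Strichartz choices.} I split into the three regimes of the theorem. When a Klein-Gordon factor has the smallest frequency (Cases II, III: $N_1\ll N_0\sim N_2$ or $N_2\ll N_0\sim N_1$), I pair the wave $f$ with the high-frequency Klein-Gordon factor and apply Proposition \ref{thm2.2} with output at the remaining Klein-Gordon factor, obtaining the bound $(N_{\min}^{012} L_0 L_i)^{1/2}$ where $L_i$ is the modulation of the paired Klein-Gordon factor; this is the key estimate and depends crucially on $0<c<1$. When the wave is at the smallest frequency (Case I: $N_0\ll N_1\sim N_2$), I apply Proposition \ref{thm2.2} with output at one of the Klein-Gordon factors, obtaining $(N_0 L_0 L_i)^{1/2}$ for $i=1$ or $2$, or alternatively Proposition \ref{thm2.1} pairing the two Klein-Gordon factors with output at the wave; the better choice depends on the subcase identified by which $L_j$ is the maximum.

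\textbf{Modulation gain and summation.} The bilinear Strichartz alone closes the estimate only for $s>-1/2$. To push down to $-13/18$, I invoke Lemma \ref{lem modu}, which in the $\pm_1=\pm_2=-$ setting gives $L_{\max}^{012} \gtrsim (1-c) N_{\max}^{012}$. I subdivide each frequency regime according to which $L_j$ realizes the maximum, and exchange a power $L_j^{-\delta}$ for $N_{\max}^{-\delta}$ up to the limit allowed by summability of the corresponding dyadic $L_j$-series. This modulation gain must compensate the multiplicative prefactor ($N_1^{-1}$ in $I_1^-$, $N_0 N_1^{-1}N_2^{-1}$ in $I_2^-$) together with the positive $N_0^{1/2}$ (or $N_0^{1/2}N_1^{1/4}$) emerging from the Strichartz estimates. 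Choosing $b=1/2+\eps$ for small $\eps>0$, the $L_j$-sums close by dyadic Cauchy-Schwarz and the remaining $N_j$-sums become geometric series that converge exactly for $s>-13/18$.

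\textbf{Main obstacle.} The tightest subcase is the high-high-to-low interaction $N_0\ll N_1\sim N_2$ with $L_{\max}^{012}=L_2$. There the dual weight $L_2^{b-1}$ already carries a large negative exponent (close to $-1/2$), so only about half of the modulation gain $L_2\gtrsim N_1$ is ``free''; the rest must be spent to overcome the regime $s<-1/2$ where the unmodulated estimate fails. The resulting balance among $N_0^{1/2}$ (Strichartz), $N_1^{-1}$ (prefactor), $N_{\max}=N_1$ (gain), the weights $N_0^{-s}$, $N_1^{-s}$, $N_2^s\sim N_1^s$, and the corresponding $L_j$-weights yields the summability threshold $s>-13/18$. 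The estimate for $I_2^-$ is analogous; the extra prefactor $N_0 N_1^{-1}N_2^{-1}$ is compensated by the shift of the dual $f$-weights from $(s,b)$ to $(-s,1-b)$, and the same critical subcase drives the regularity threshold.
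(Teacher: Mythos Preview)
Your overall architecture is right and matches the paper: split into the three frequency regimes, invoke Lemma~\ref{lem modu} to obtain $L_{\max}^{012}\gtrsim N_{\max}^{12}$, subdivide each regime according to which $L_j$ realizes this maximum, and then apply a bilinear Strichartz estimate in each subcase. But two points in your proposal are off.

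\textbf{The bilinear pairing must put the output on the large-modulation factor.} In Cases II and III you commit to Proposition~\ref{thm2.2} with the wave and the \emph{high}-frequency Klein--Gordon piece as inputs and output at the low-frequency Klein--Gordon piece. That choice is correct only in the subcase where the low-frequency modulation is the large one (the paper's (IIIc)). In the subcases where $L_0$ or the high-frequency $L_j$ is large (the paper's (IIIa), (IIIb), and the analogous Case~II subcases), that index then sits as an \emph{input} in your bilinear bound $(N_{\min}^{012}L_0L_i)^{1/2}$; the gain $L_j\gtrsim N_{\max}$ therefore produces only $\sum_{L_j\gtrsim N_{\max}}L_j^{1/2-b}\sim N_{\max}^{1/2-b}$, which closes the $N$-sums only for $s>-1/2$. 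The same defect occurs in your Case~I option ``Proposition~\ref{thm2.1} pairing the two Klein--Gordon factors with output at the wave'' when $L_1$ is the large modulation (subcase (Ib)). The paper instead always takes the \emph{output} at whichever factor carries the large modulation---using Proposition~\ref{thm2.1} in (Ia), (Ib), (Ic), (IIIa), (IIIb) and Proposition~\ref{thm2.2} only in (IIIc)---so that the large $L_j$ is absent from the bilinear constant and the full weight $L_j^{-b}$ or $L_j^{-(1-b)}$ is converted into $N_{\max}^{-b}$ or $N_{\max}^{-(1-b)}$.

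\textbf{The threshold here is $-3/4$, not $-13/18$.} With the correct choice of output in each subcase, every $N$-sum becomes geometric and convergent for $s>-3/4$ (for suitable $b>1/2$); your claim that the high--high--low subcase with $L_{\max}=L_2$ forces $s>-13/18$ is a miscount. The constraint $s>-13/18$ in the hypothesis is inherited from Theorem~\ref{nonlinearity-estimate} and is actually produced by the $\pm_1\neq\pm_2$ analysis in Section~4 (specifically case (IV)$'$, which balances Proposition~\ref{thm2.2} against the nonlinear Loomis--Whitney estimate Proposition~\ref{thm2.6}), not by anything in the $\pm_1=\pm_2$ case treated here.
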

\begin{proof}
For simplicity, we use 
$f^{\pm, c} := \chi_{K^{\pm,c}_{N_0,L_0}} f$, ${g}_k^- := \chi_{K^{-}_{N_k,L_k}}g$ and $(g_k^-)_- ( \, \cdot \, ):= g_k^- (- \, \cdot \, )$ with $k=1,2$. 
Since the proof of (\ref{goal2-thm2.3}) is analogous to that of (\ref{goal1-thm2.3}), we establish only (\ref{goal1-thm2.3}). 
From Lemma \ref{lem modu}, it holds that $L_{\textnormal{max}}^{012} \gtrsim N_{\textnormal{max}}^{12}$. We decompose 
the proof into the three cases:\\
(I) $1\leq N_0 \lesssim N_1 \sim N_2$, \ (II) $1\leq N_1 \lesssim N_0 \sim N_2$, \ (III) $1\leq N_2 \lesssim N_0 \sim N_1.$

First we consider the case (I). 
Considering that $L_{\textnormal{max}}^{012} \gtrsim N_{\textnormal{max}}^{12}$, we subdivide the cases further:\\
(Ia) $N_1 \lesssim L_0$. We deduce from H\"{o}lder inequality and Proposition \ref{thm2.1} that
\begin{align*}
& \sum_{N_1}  \sum_{1\leq N_0 \lesssim N_1 \sim N_2}  \sum_{L_j} 
 \left|N_1^{-1} \int{ f^{\pm, c} (g_1^- * g_2^-)} d\tau d\xi \right|\\
\lesssim & \sum_{N_1} \sum_{1\leq N_0 \lesssim N_1 \sim N_2}  \sum_{L_j} N_1^{-1}  
\| {f^{\pm, c}} \|_{L_{\xi,\tau}^2} \| \chi_{K^{\pm, c}_{N_0,L_0}} (g_1^- * g_2^-) \|_{L_{\xi,\tau}^2}
\\
 \lesssim & \sum_{N_1} \sum_{1\leq N_0 \lesssim N_1 \sim N_2}  \sum_{L_1, L_2} N_1^{-1} N_0^{1/2} 
 N_1^{1/4} L_1^{1/2}L_2^{1/4} N_1^{-b} \| {f^{\pm, c}} \|_{{\ha{X}_{\pm,c}^{0,b}}}\|g_1^- \|_{{L_{\xi,\tau}^2}} \| {g_2^-} \|_{{L_{\xi,\tau}^2}} \\
\lesssim &  \sum_{N_1} \sum_{1\leq N_0 \lesssim N_1 \sim N_2}  N_0^{1/2-s}  N_1^{-3/4-b} N_0^s\| {f^{\pm, c}} \|_{{\ha{X}_{\pm,c}^{0,b}}} N_1^s
\|g_1^- \|_{{\ha{X}_{-}^{0,b}}} N_2^{-s} \| {g_2^-} \|_{{\ha{X}_{-}^{0,1-b-\epsilon}}} \\
\lesssim & \|f \|_{\ha{X}^{s,\,b}_{\pm,\,c}} \|g_1 \|_{\ha{X}^{s,\,b}_{-}}\|g_2 \|_{\ha{X}^{-s,\,1-b-\e}_{-}}.
\end{align*}
(Ib) $N_1 \lesssim L_1$. Similarly, from H\"{o}lder inequality and Proposition \ref{thm2.1} we get
\begin{align*}
& \sum_{N_1}  \sum_{1\leq N_0 \lesssim N_1 \sim N_2}  \sum_{L_j} 
 \left|N_1^{-1} \int{ f^{\pm, c} (g_1^- * g_2^-)} d\tau d\xi \right|\\
\lesssim & \sum_{N_1} \sum_{1\leq N_0 \lesssim N_1 \sim N_2}  \sum_{L_j} N_1^{-1}  
\| \chi_{K^{-}_{N_1,L_1}} ({f^{\pm, c}} * ({g_2^-})_-) \|_{{L_{\xi,\tau}^2}} \|{g_1^-} \|_{{L_{\xi,\tau}^2}} \\
 \lesssim & \sum_{N_1} \sum_{1\leq N_0 \lesssim N_1 \sim N_2}  \sum_{L_0, L_2} N_1^{-1} N_0^{3/4} 
L_0^{1/2}L_2^{1/4} \| {f^{\pm, c}} \|_{{L_{\xi,\tau}^2}} N_1^{-b} \|g_1^- \|_{{\ha{X}_{-}^{0,b}}} \| {g_2^-} \|_{{L_{\xi,\tau}^2}} \\
\lesssim &  \sum_{N_1} \sum_{1\leq N_0 \lesssim N_1 \sim N_2}  N_0^{3/4-s}  N_1^{-1-b} N_0^s\| {f^{\pm, c}} \|_{{\ha{X}_{\pm,c}^{0,b}}} N_1^s
\|g_1^- \|_{{\ha{X}_{-}^{0,b}}} N_2^{-s} \| {g_2^-} \|_{{\ha{X}_{-}^{0,1-b-\epsilon}}} \\
\lesssim & \|f \|_{\ha{X}^{s,\,b}_{\pm,\,c}} \|g_1 \|_{\ha{X}^{s,\,b}_{-}}\|g_2 \|_{\ha{X}^{-s,\,1-b-\e}_{-}}.
\end{align*}
(Ic) $N_1 \lesssim L_2$.
\begin{align*}
&  \sum_{N_1}  \sum_{1\leq N_0 \lesssim N_1 \sim N_2}  \sum_{L_j} 
 \left|N_1^{-1} \int{ f^{\pm, c} (g_1^- * g_2^-)} d\tau d\xi \right|\\
\lesssim & \sum_{N_1} \sum_{1\leq N_0 \lesssim N_1 \sim N_2}  \sum_{L_j} N_1^{-1}  
\| \chi_{K^{-}_{N_2,L_2}} ({f^{\pm, c}} * ({g_1^-})_-) \|_{{L_{\xi,\tau}^2}} \|{g_2^-} \|_{{L_{\xi,\tau}^2}}\\
 \lesssim & \sum_{N_1} \sum_{1\leq N_0 \lesssim N_1 \sim N_2}  \sum_{L_0, L_1} N_1^{-1} N_0^{3/4} 
L_0^{1/2}L_1^{1/4} \| {f^{\pm, c}} \|_{{L_{\xi,\tau}^2}} \|g_1^- \|_{{L_{\xi,\tau}^2}}  N_1^{-1+b+\e} \| {g_2^-} \|_{{\ha{X}_{-}^{0,1-b-\epsilon}}} \\
\lesssim &  \sum_{N_1} \sum_{1\leq N_0 \lesssim N_1 \sim N_2}  N_0^{3/4-s}  N_1^{-2+b+\e} N_0^s\| {f^{\pm, c}} \|_{{\ha{X}_{\pm,c}^{0,b}}} N_1^s
\|g_1^- \|_{{\ha{X}_{-}^{0,b}}} N_2^{-s} \| {g_2^-} \|_{{\ha{X}_{-}^{0,1-b-\epsilon}}} \\
\lesssim & \|f \|_{\ha{X}^{s,\,b}_{\pm,\,c}} \|g_1 \|_{\ha{X}^{s,\,b}_{-}}\|g_2 \|_{\ha{X}^{-s,\,1-b-\e}_{-}}.
\end{align*}
For the case (II), we can show (\ref{goal1-thm2.3}) in the same manner as above. We omit the proof.
Lastly, we consider the case (III).\\
(IIIa) $N_0 \lesssim L_0$. We deduce from H\"{o}lder inequality and Proposition \ref{thm2.1} that
\begin{align*}
& \sum_{N_0}  \sum_{1\leq N_2 \lesssim N_0 \sim N_1}  \sum_{L_j} 
 \left|N_1^{-1} \int{ f^{\pm, c} (g_1^- * g_2^-)} d\tau d\xi \right|\\
\lesssim & \sum_{N_0}  \sum_{1\leq N_2 \lesssim N_0 \sim N_1}  \sum_{L_j} N_0^{-1}  
\| {f^{\pm, c}} \|_{L_{\xi,\tau}^2} \| \chi_{K^{\pm}_{N_0,L_0}} (g_1^- * g_2^-) \|_{L_{\xi,\tau}^2}
\\
 \lesssim & \sum_{N_0}  \sum_{1\leq N_2 \lesssim N_0 \sim N_1}  \sum_{L_1, L_2}  N_0^{-1} N_2^{3/4} 
L_1^{1/2}L_2^{1/4} N_0^{-b} \| {f^{\pm, c}} \|_{{\ha{X}_{\pm,c}^{0,b}}}\|g_1^- \|_{{L_{\xi,\tau}^2}} \| {g_2^-} \|_{{L_{\xi,\tau}^2}} \\
\lesssim &  \sum_{N_0}  \sum_{1\leq N_2 \lesssim N_0 \sim N_1} N_0^{-1-b-2s}  N_2^{3/4 + s} N_0^s\| {f^{\pm, c}} \|_{{\ha{X}_{\pm,c}^{0,b}}} N_1^s
\|g_1^- \|_{{\ha{X}_{-}^{0,b}}} N_2^{-s} \| {g_2^-} \|_{{\ha{X}_{-}^{0,1-b-\epsilon}}} \\
\lesssim & \|f \|_{\ha{X}^{s,\,b}_{\pm,\,c}} \|g_1 \|_{\ha{X}^{s,\,b}_{-}}\|g_2 \|_{\ha{X}^{-s,\,1-b-\e}_{-}}.
\end{align*}
(IIIb) $N_0 \lesssim L_1$. Similarly, 
\begin{align*}
& \sum_{N_0}  \sum_{1\leq N_2 \lesssim N_0 \sim N_1}  \sum_{L_j} 
 \left|N_1^{-1} \int{ f^{\pm, c} (g_1^- * g_2^-)} d\tau d\xi \right|\\
\lesssim & \sum_{N_0}  \sum_{1\leq N_2 \lesssim N_0 \sim N_1}  \sum_{L_j} N_0^{-1}  
\| \chi_{K^{-}_{N_1,L_1}} ({f^{\pm, c}} * ({g_2^-})_-) \|_{{L_{\xi,\tau}^2}} \|{g_1^-} \|_{{L_{\xi,\tau}^2}}\\
 \lesssim & \sum_{N_0}  \sum_{1\leq N_2 \lesssim N_0 \sim N_1}  \sum_{L_0, L_2}  N_0^{-1} N_2^{3/4} 
L_0^{1/2}L_2^{1/4} \| {f^{\pm, c}} \|_{{L_{\xi,\tau}^2}} N_0^{-b} \|g_1^- \|_{{\ha{X}_{-}^{0,b}}} \| {g_2^-} \|_{{L_{\xi,\tau}^2}} \\
\lesssim &  \sum_{N_0}  \sum_{1\leq N_2 \lesssim N_0 \sim N_1} N_0^{-1-b-2s}  N_2^{3/4 + s} N_0^s\| {f^{\pm, c}} \|_{{\ha{X}_{\pm,c}^{0,b}}} N_1^s
\|g_1^- \|_{{\ha{X}_{-}^{0,b}}} N_2^{-s} \| {g_2^-} \|_{{\ha{X}_{-}^{0,1-b-\epsilon}}} \\
\lesssim & \|f \|_{\ha{X}^{s,\,b}_{\pm,\,c}} \|g_1 \|_{\ha{X}^{s,\,b}_{-}}\|g_2 \|_{\ha{X}^{-s,\,1-b-\e}_{-}}.
\end{align*}
(IIIc) $N_0 \lesssim L_2$. In this case, we need to utilize Proposition \ref{thm2.2} instead of Proposition \ref{thm2.1}.
\begin{align*}
& \sum_{N_0}  \sum_{1\leq N_2 \lesssim N_0 \sim N_1}  \sum_{L_j} 
 \left|N_1^{-1} \int{ f^{\pm, c} (g_1^- * g_2^-)} d\tau d\xi \right|\\
\lesssim & \sum_{N_0}  \sum_{1\leq N_2 \lesssim N_0 \sim N_1}  \sum_{L_j} N_0^{-1}  
\| P_{K^{-}_{N_2,L_2}} ({f^{\pm, c}} {g_1^-}) \|_{{L_{\xi,\tau}^2}} \|{g_2^-} \|_{{L_{\xi,\tau}^2}}\\
 \lesssim & \sum_{N_0}  \sum_{1\leq N_2 \lesssim N_0 \sim N_1}  \sum_{L_0, L_1}  N_0^{-1} N_2^{1/2} 
L_0^{1/2}L_1^{1/2} \| {f^{\pm, c}} \|_{{L_{\xi,\tau}^2}} \|g_1^- \|_{{L_{\xi,\tau}^2}} N_0^{-1+b+\e} \| {g_2^-} \|_{{\ha{X}_{-}^{0,1-b-\epsilon}}}\\
\lesssim &  \sum_{N_0}  \sum_{1\leq N_2 \lesssim N_0 \sim N_1} N_0^{-2+b-2s+\e}  N_2^{1/2 + s} N_0^s\| {f^{\pm, c}} \|_{{\ha{X}_{\pm,c}^{0,b}}} N_1^s
\|g_1^- \|_{{\ha{X}_{-}^{0,b}}} N_2^{-s} \| {g_2^-} \|_{{\ha{X}_{-}^{0,1-b-\epsilon}}} \\
\lesssim & \|f \|_{\ha{X}^{s,\,b}_{\pm,\,c}} \|g_1 \|_{\ha{X}^{s,\,b}_{-}}\|g_2 \|_{\ha{X}^{-s,\,1-b-\e}_{-}}.
\end{align*}
\end{proof}
\section{Proof of Theorem \ref{nonlinearity-estimate} for $\pm_1 \not= \pm_2$.}
In this section, we establish (\ref{goal-1-2}) and (\ref{goal-2-2}) with $\pm_2 = +$. 
Note that if one of the inequalities $|\xi_2| \leq \frac{1-c}{2 (1+c)} |\xi_1|$ and $|\xi_1| \leq \frac{1-c}{2 (1+c)} |\xi_2|$ 
holds, then we observe that for $\tau = \tau_1+ \tau_2$, $\xi = \xi_1 + \xi_2$,
\begin{align*}
\textnormal{max}(\LR{\tau \pm_0 c|\xi|} , \ \LR{\tau_1 - |\xi_1|} , \ \LR{\tau_2 + |\xi_2|} )
& \geq |\pm c |\xi| -  |\xi_1| + |\xi_2||\\
& \geq ||\xi_1| - |\xi_2||- c|\xi_1|-c |\xi_2|\\
& \geq \frac{1-c}{2} \textnormal{max}(|\xi_1|, \ |\xi_2|)
\end{align*}
and we can verify (\ref{goal-1-2}) and (\ref{goal-2-2}) by the same proof as in the case $\pm_2 = -$. To avoid redundancy, we omit 
the proof.
\begin{prop}\label{prop-hojo}
Let $0<c<1$. 
For any $s \in (-3/4 , \, 0)$, there exists $b \in (1/2, \, 1)$ such that for $f,g_1, g_2 \in \mathcal{S}(\R \cross \R^2)$, the following estimates hold:
\begin{align*}
& \Bigl( \sum_{N_0} \sum_{1\leq N_1 \ll N_0 \sim N_2} + 
\sum_{N_0} \sum_{1\leq N_2 \ll N_0 \sim N_1}\Bigr) I_1^+ \lesssim \|f \|_{\ha{X}^{s,\,b}_{\pm,\,c}} \|g_1 \|_{\ha{X}^{s,\,b}_{-}}\|g_2 \|_{\ha{X}^{-s,\,1-b-\e}_{+}},
\\
& \Bigl( \sum_{N_0} \sum_{1\leq N_1 \ll N_0 \sim N_2} + 
\sum_{N_0} \sum_{1\leq N_2 \ll N_0 \sim N_1}\Bigr) I_2^+  \lesssim \|f \|_{\ha{X}^{-s,\,1-b-\e}_{\pm,\,c}} \|g_1 \|_{\ha{X}^{s,\,b}_{-}}\|g_2 \|_{\ha{X}^{s,\,b}_{+}},
\end{align*}
where
\begin{align*}
I_1^+ := & \sum_{L_j} \left|N_1^{-1} \int{ \left( \chi_{K^{\pm,c}_{N_0,L_0}} f \right) 
\left(
(\chi_{K^{-}_{N_1,L_1}} g_1) * (\chi_{K^{+}_{N_2,L_2}} g_2)
\right)} 
d\tau d\xi \right|,\\
I_2^+ := & \sum_{L_j} \left|N_0 N_1^{-1}N_2^{-1}  \int{ \left( \chi_{K^{\pm,c}_{N_0,L_0}} f \right) 
\left(
(\chi_{K^{-}_{N_1,L_1}} g_1) * (\chi_{K^{+}_{N_2,L_2}} g_2)
\right)} 
d\tau d\xi \right|.
\end{align*}
\end{prop}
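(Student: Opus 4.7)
The plan is to follow the argument used for cases (II) and (III) in the proof of Theorem \ref{thm2.3}, with the modulation identity in the remark immediately preceding the proposition playing the role that Lemma \ref{lem modu} played in the $\pm_1=\pm_2$ case.

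In both regimes considered by the proposition, $N_1 \ll N_0 \sim N_2$ and $N_2 \ll N_0 \sim N_1$, one of $|\xi_1|, |\xi_2|$ is dyadically much smaller than the other. By interpreting $\ll$ as being below the threshold $\frac{1-c}{2(1+c)}$ (which only removes a $c$-dependent finite number of intermediate dyadic levels, easily absorbed into the implicit constants), the hypothesis of the remark is satisfied and we obtain the crucial bound
\EQQS{
L_{\textnormal{max}}^{012} \gtrsim (1-c)\max(|\xi_1|,|\xi_2|) \sim N_0.
}
This replaces the conclusion of Lemma \ref{lem modu}, which is what drove the subcase analysis in Theorem \ref{thm2.3}.

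With $L_{\textnormal{max}}^{012} \gtrsim N_0$ in hand, the rest of the argument is a direct transcription of the proof of Theorem \ref{thm2.3}. I would split each of the two regimes into three subcases according to whether $L_0$, $L_1$, or $L_2$ realizes the maximum; in each subcase I would apply H\"older's inequality and bound the $L^2_{x,t}$ norm of the two-factor product by Proposition \ref{thm2.1}, except in the analogue of subcase (IIIc) where a Klein-Gordon factor is convolved with a wave factor to produce the Klein-Gordon output and Proposition \ref{thm2.2} is required. The arithmetic in the dyadic summations in $N_0,N_1,N_2$ and $L_0,L_1,L_2$ is identical to that of Theorem \ref{thm2.3}, so it converges in the same regularity window $s \in (-13/18,0)$ for suitable $b \in (1/2,1)$.

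Because Propositions \ref{thm2.1} and \ref{thm2.2} are valid regardless of the signs $\pm_j$, the change from $\pm_2=-$ to $\pm_2=+$ introduces no new analytic obstruction. The only mildly delicate point, and the main thing I would double-check carefully, is the absorption of the borderline dyadic levels $N_1 \sim \frac{1-c}{2(1+c)}N_2$ (and its symmetric counterpart), where the pointwise hypothesis of the remark just barely fails; this is routine but is the one place where the current proof departs from the verbatim argument of Theorem \ref{thm2.3}.
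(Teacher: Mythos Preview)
Your proposal is correct and follows exactly the approach indicated in the paper: the paragraph immediately preceding Proposition~\ref{prop-hojo} establishes the modulation bound $L_{\textnormal{max}}^{012}\gtrsim \max(|\xi_1|,|\xi_2|)$ under the separation hypothesis, and the paper then states explicitly that the estimates are verified ``by the same proof as in the case $\pm_2=-$'' (i.e., Theorem~\ref{thm2.3}), omitting the details. Your identification of the subcase analysis, the roles of Propositions~\ref{thm2.1} and~\ref{thm2.2}, and the handling of the borderline dyadic levels all match this intended argument.
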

Thanks to Proposition \ref{prop-hojo}, 
we can assume that $1\leq N_0 \lesssim N_1 \sim N_2$. 
In this case, we no longer make use of the useful 
estimate such as (\ref{lem modu1}) and, as we mentioned in the introduction, it appears that the bilinear  estimates 
Propositions \ref{thm2.1}, \ref{thm2.2} are not 
enough to show (\ref{goal-1-2}) and (\ref{goal-2-2}). Thus we employ a new estimate developed by Bejenaru-Herr-Tataru 
\cite{BHT} and applied to Zakharov system in \cite{BHHT}, \cite{BH}. 
To describe it precisely, we introduce the decomposition of $\R^3 \cross \R^3$ which was exploited in \cite{BHHT}. 
For dyadic numbers $M_0$, $M_1$, to be chosen later, we decompose $\R^3 \cross \R^3$ into the sets $\{ {\mathfrak{D}}_j^A\}$.
\begin{align*}
\R^3 \cross \R^3 = & \left\{ \angle(\xi_1, \ \xi_2) \leq \frac{16}{M_0}\pi \right\} \cup 
\bigcup_{64 \leq A \leq M_0} \left\{ \frac{16}{A} \pi \leq \angle (\xi_1, \ \xi_2) \leq \frac{32}{A} \pi \right\}\\
 \cup & \left\{ \pi - \frac{16}{M_1}\pi \leq \angle(\xi_1, \ \xi_2) \right\} \cup 
\bigcup_{64 \leq A \leq M_1} \left\{ \pi - \frac{32}{A} \pi \leq \angle (\xi_1, \ \xi_2) \leq \pi - \frac{16}{A} \pi \right\}\\
= & \bigcup_{\tiny{\substack{-M_0 \leq j_1,j_2 \leq M_0 -1\\|j_1 - j_2|\leq 16}}} 
{\mathfrak{D}}_{j_1}^{M_0} \cross {\mathfrak{D}}_{j_2}^{M_0} \cup \bigcup_{64 \leq A \leq M_0} \ \bigcup_{\tiny{\substack{-A \leq j_1,j_2 \leq A -1\\ 16 \leq |j_1 - j_2|\leq 32}}} 
{\mathfrak{D}}_{j_1}^A \cross {\mathfrak{D}}_{j_2}^A \\
\cup & \bigcup_{\tiny{\substack{-M_1 \leq j_1,j_2 \leq M_1 -1\\|j_1 - j_2 \pm M_1|\leq 16}}} 
{\mathfrak{D}}_{j_1}^{M_1} \cross {\mathfrak{D}}_{j_2}^{M_1} \cup \bigcup_{64 \leq A \leq M_1} \ 
\bigcup_{\tiny{\substack{-A \leq j_1,j_2 \leq A +1\\ 16 \leq |j_1 - j_2 \pm A |\leq 32}}} 
{\mathfrak{D}}_{j_1}^A \cross {\mathfrak{D}}_{j_2}^A,
\end{align*}
where $\angle (\xi_1, \ \xi_2) \in [0, \ \pi]$ is the smaller angle between $\xi_1$ and $\xi_2$. 

First we assume that $\pi/2 \leq \angle (\xi_1, \xi_2) \leq \pi$. 
We find that if $\angle (\xi_1, \ \xi_2)$ is sufficiently close to $\pi$ , then the following helpful inequality holds true.
\begin{lem}\label{lem-modu1}
Let $\tau = \tau_1 + \tau_2$, $\xi = \xi_1 +\xi_2$, $0<c<1$ and  $M_1$ be the minimal dyadic number which satisfies 
\begin{equation}
M_1 \geq 2^{7} (1-c)^{-\frac{1}{2}} \frac{(|\xi_1||\xi_2|)^{\frac{1}{2}}}{|\xi|},\label{assumption-lem2.5}
\end{equation}
then for any $(\tau_1, \xi_1) \in {\mathfrak{D}}_{j_1}^{M_1}$, $(\tau_2, \xi_2) \in {\mathfrak{D}}_{j_2}^{M_1}$ where $|j_1 - j_2 \pm M_1|\leq 16$,  the following inequality holds;
\begin{equation*}
\textnormal{max}(\LR{\tau \pm c|\xi|} , \ \LR{\tau_1 - |\xi_1|} , \ \LR{\tau_2 + |\xi_2|} ) \gtrsim |\xi|
\end{equation*}
\end{lem}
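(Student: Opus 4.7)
My plan is to combine the elementary algebraic identity underlying Lemma \ref{lem modu} with the quantitative angular information supplied by the sector decomposition at level $M_1$. Setting $h := |\xi_1| - |\xi_2|$, the identity
\EQQS{
(\ta \pm c|\xi|) - (\ta_1 - |\xi_1|) - (\ta_2 + |\xi_2|) = \pm c|\xi| + h
}
together with the triangle inequality yields
\EQQS{
\textnormal{max}\bigl(\LR{\ta \pm c|\xi|},\, \LR{\ta_1 - |\xi_1|},\, \LR{\ta_2 + |\xi_2|}\bigr) \geq \tfrac{1}{3}\bigl|\, h \pm c|\xi|\,\bigr|,
}
and the reverse triangle inequality $\bigl|\,h \pm c|\xi|\,\bigr| \geq \bigl|\,|h| - c|\xi|\,\bigr|$ reduces the lemma to the purely geometric claim $|h| - c|\xi| \gtrsim (1-c)|\xi|$.

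To establish this, I would use the law of cosines in the form
\EQQS{
|\xi|^2 = h^2 + 2|\xi_1||\xi_2|(1 - \cos\phi), \qquad \phi := \pi - \angle(\xi_1, \xi_2).
}
The angular sector constraint $|j_1 - j_2 \pm M_1| \leq 16$ forces $\phi \lesssim 1/M_1$, hence $1 - \cos\phi \lesssim 1/M_1^2$. The hypothesis \eqref{assumption-lem2.5} then yields
\EQQS{
2|\xi_1||\xi_2|(1 - \cos\phi) \lesssim |\xi_1||\xi_2|/M_1^2 \leq \delta (1-c)|\xi|^2,
}
where $\delta$ is an absolute constant that, by choosing the numerical prefactor $2^7$ in \eqref{assumption-lem2.5} generously enough, can be made small (in particular $\delta \leq 1$). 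Consequently $h^2 \geq (1 - \delta(1-c))|\xi|^2$, and the elementary bound $\sqrt{1-x} \geq 1 - x/2$ on $[0,1]$ produces $|h| \geq (1 - \tfrac{\delta}{2}(1-c))|\xi|$.

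The decisive algebraic step is the cancellation identity
\EQQS{
1 - \tfrac{\delta}{2}(1-c) - c = (1-c)\bigl(1 - \tfrac{\delta}{2}\bigr),
}
which implies $|h| - c|\xi| \geq (1-c)(1 - \delta/2)|\xi| \gtrsim (1-c)|\xi|$; this chain also guarantees $|h| \geq c|\xi|$, so the reverse triangle inequality above is used in the correct direction. The main subtlety, and the reason for the precise weight $(1-c)^{-1/2}$ in \eqref{assumption-lem2.5}, is that the angular correction to $|\xi|^2$ must be dominated not merely by $|\xi|^2$ but by $(1-c)|\xi|^2$: any weaker hypothesis on $M_1$ would leave a residual comparable to the spectral gap $1-c$ between the two propagation speeds and destroy the cancellation.
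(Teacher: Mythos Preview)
Your proof is correct and follows essentially the same approach as the paper: reduce via the algebraic identity to a lower bound on $||\xi_1|-|\xi_2|| - c|\xi|$, then use the law of cosines together with the angular constraint $\pi - \angle(\xi_1,\xi_2) \lesssim 1/M_1$ and the hypothesis \eqref{assumption-lem2.5} to show $(|\xi_1|-|\xi_2|)^2 \geq (1-\delta(1-c))|\xi|^2$. The only cosmetic difference is that the paper targets the specific threshold $||\xi_1|-|\xi_2|| > \sqrt{(1+c)/2}\,|\xi|$ and invokes $\sqrt{(1+c)/2} - c > (1-c)/4$, whereas you carry a generic $\delta$ and use $\sqrt{1-x} \geq 1 - x/2$; both routes exploit the same cancellation you highlight.
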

\begin{proof}
After rotation, we may assume $\xi_1 = (|\xi_1|,  0)$, and then $|j_2 \pm M_1|\leq 16$. It follows from the inequality
\begin{equation*}
\textnormal{max}(\LR{\tau \pm c|\xi|} , \ \LR{\tau_1 - |\xi_1|} , \ \LR{\tau_2 + |\xi_2|} )\geq ||\xi_1|-|\xi_2|| - c |\xi|,
\end{equation*}
it suffices to show $||\xi_1|-|\xi_2|| > \sqrt{\frac{1+c}{2}} |\xi|$. Indeed, 
\begin{equation*}
\sqrt{\frac{1+c}{2}} - c > \frac{1}{4} (1-c)(1+2c)>\frac{1-c}{4}.
\end{equation*}
From $|j_2 \pm M_1|\leq 16$, we obtain 
\begin{align*}
|\xi|^2 & = (|\xi_1|+|\xi_2| \cos (\angle (\xi_1, \ \xi_2) ))^2 + (N_2 \sin (\angle (\xi_1, \ \xi_2) ))^2 \\
& < (|\xi_1| -|\xi_2|)^2 + 2 |\xi_1||\xi_2|(1 + \cos (\angle (\xi_1, \ \xi_2) ))\\
& < (|\xi_1| -|\xi_2|)^2 + 4 |\xi_1| |\xi_2| (\angle (\xi_1, \ \xi_2))^2\\
& < (|\xi_1| -|\xi_2|)^2 + \frac{1-c}{2} |\xi|^2,
\end{align*}
which gives
\begin{equation*}
\frac{1+c}{2} |\xi|^2 < (|\xi_1| -|\xi_2|)^2.
\end{equation*}
This completes the proof.
\end{proof}
Next we consider the case $64 \leq A \leq M_1$ and $16 \leq |j_1 - j_2 \pm A |\leq 32$.
\begin{prop}\label{thm2.6}
Let $\tau = \tau_1+\tau_2$, $\xi=\xi_1+\xi_2$, $0<c<1$ and $f$, $g_1$, $g_2 \in L^2$ be satisfy
\begin{equation*}
\operatorname{supp} f \subset K^{\pm,c}_{N_0,L_0}, \quad \operatorname{supp} g_1 \subset {\mathfrak{D}}_{j_1}^A \cap K^{-}_{N_1,L_1}, \quad 
 \operatorname{supp} g_2 \subset {\mathfrak{D}}_{j_2}^A \cap K^{+}_{N_2,L_2},
\end{equation*}
and $64 \leq N_0 \lesssim N_1 \sim N_2$, \ $64 \leq A \leq M_1$, \ $16 \leq |j_1 - j_2 \pm A |\leq 32$. 
Then the following estimate holds;
\begin{equation*}
\left| \int f(\tau, \xi) g_1(\tau_1, \xi_1) g_2 (\tau_2, \xi_2) d\tau_1 d\tau_2 d\xi_1 d \xi_2 \right| \lesssim A^{\frac{7}{8}} (L_0 L_1 L_2)^\frac{1}{2}
\|f\|_{{L_{\xi,\tau}^2}} \|g_1 \|_{{L_{\xi,\tau}^2}} \|g_2 \|_{{L_{\xi,\tau}^2}}.
\end{equation*}
\end{prop}
For the proof of the above proposition, we introduce the important estimate. See \cite{BH} for more general case.
\begin{prop}[\cite{BHT} Corollary 1.5] \label{prop2.7}
Assume that the surface $\tilde{S_i}$ $(i=1,2,3)$ 
is an open and bounded subset of $\tilde{S_i^*}$ which 
satisfies the following conditions \textnormal{(Assumption 1.1 in \cite{BHT})}.

\textnormal{(i)} $\tilde{S_i^*}$ is defined as
\begin{equation*}
\tilde{S_i^*} = \{ \tilde{\sigma_i} \in U_i \ | \ \Phi_i(\tilde{\sigma_i}) = 0 , \nabla \Phi_i \not= 0, \Phi_i \in C^{1,1} (U_i) \}, 
\end{equation*}
for a convex $U_i \subset \R^3$ such that \textnormal{dist}$(\tilde{S_i}, U_i^c) \geq$ \textnormal{diam}$(\tilde{S_i})$;

\textnormal{(ii)} the unit normal vector field $\tilde{\mathfrak{n}_i}$ on $\tilde{S_i^*}$ satisfies the H\"{o}lder condition
\begin{equation*}
\sup_{\tilde{\sigma}, \tilde{\sigma}' \in \tilde{S_i^*}} \frac{|\tilde{\mathfrak{n}_i}(\tilde{\sigma}) - 
\tilde{\mathfrak{n}_i}(\tilde{\sigma}')|}{|\tilde{\sigma} - \tilde{\sigma}'|}
+ \frac{|\tilde{\mathfrak{n}_i}(\tilde{\sigma}) (\tilde{\sigma} - \tilde{\sigma}')|}{|\tilde{\sigma} - \tilde{\sigma}'|^2} \lesssim 1;
\end{equation*}

\textnormal{(iii)} the matrix $\tilde{N}(\tilde{\sigma_1}, \tilde{\sigma_2}, \tilde{\sigma_3}) = (\tilde{\mathfrak{n}_1} 
(\tilde{\sigma_1}), \tilde{\mathfrak{n}_2}(\tilde{\sigma_2}), \tilde{\mathfrak{n}_3}(\tilde{\sigma_3}))$ 
satisfies the transversality condition
\begin{equation*}
\frac{1}{2} \leq \textnormal{det} \tilde{N}(\tilde{\sigma_1}, \tilde{\sigma_2}, \tilde{\sigma_3})  \leq 1
\end{equation*}
for all $(\tilde{\sigma_1}, \tilde{\sigma_2}, \tilde{\sigma_3}) \in \tilde{S_1^*} \cross \tilde{S_2^*} \cross \tilde{S_3^*}$.

We also assume \textnormal{diam}$(\tilde{S_i}) \leq 1$. Let $T : \R^3 \to \R^3$ be an invertible, linear map and $S_i = T\tilde{S_i} $. 
Then for functions $f \in L^2 (S_1)$ and $g \in L^2 (S_2)$, the restriction of the convolution $f*g$ to 
$S_3$ is a well-defined $L^2(S_3)$-function which satisfies 
\begin{equation*}
\| f *g \|_{L^2(S_3)} \lesssim \frac{1}{\sqrt{d}} \| f \|_{L^2(S_1)} \| g\|_{L^2(S_2)},
\end{equation*}
where 
\begin{equation*}
d = \inf_{\sigma_i \in S_i} |\textnormal{det} N (\sigma_1, \sigma_2, \sigma_3)|
\end{equation*}
and $N (\sigma_1, \sigma_2, \sigma_3)$ is the matrix of the unit normals to $S_i$ at 
$ (\sigma_1, \sigma_2, \sigma_3)$.
\end{prop}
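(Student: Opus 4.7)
The plan is to establish the bound as a ``nonlinear Loomis--Whitney'' estimate on three transverse $C^{1,1}$ surfaces, in the spirit of the original argument in \cite{BHT}. By duality on $L^2(S_3)$, the convolution bound is equivalent to the trilinear estimate
\EQQS{
\left| \int_{S_1 \times S_2} f_1(\sigma_1) f_2(\sigma_2) f_3(\sigma_1+\sigma_2)\, d\mu_1(\sigma_1) d\mu_2(\sigma_2) \right| \lesssim d^{-1/2} \prod_{i=1}^3 \|f_i\|_{L^2(S_i)},
}
where $d\mu_i$ is the surface measure on $S_i$. Applying the substitution $\sigma_i = T\tilde\sigma_i$, I would rewrite the left-hand side as an integral over $\tilde S_1 \times \tilde S_2$; the transformation of surface measures by the Jacobian $|\det T|$ and the corresponding transformation of unit normals by $T^{-\top}$ (hence of $\det N$) balance each other, so the general case reduces to $T=\mathrm{Id}$ with $d$ computed from $\tilde N$.

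Next, I would use the $C^{1,1}$ H\"older bound (ii) to partition each $\tilde S_i$ into caps $\tilde S_i^\alpha$ of diameter $\delta\ll 1$ on which $\tilde{\mathfrak{n}}_i$ is nearly constant, so that each cap is trapped in an $O(\delta^2)$ slab about its tangent plane $\Pi_i^\alpha$; hypothesis (i), which ensures the normal field extends past the boundary of $\tilde S_i$, keeps this localization clean. By transversality (iii), for each pair of caps $(\tilde S_1^\alpha,\tilde S_2^\beta)$ the image of the addition map meets only $O(1)$ caps $\tilde S_3^\gamma$. On such a single triple of caps the change of variables $(\tilde\sigma_1,\tilde\sigma_2)\mapsto(\tilde\sigma_1,\tilde\sigma_1+\tilde\sigma_2)$ has Jacobian comparable to $|\det\tilde N|\geq d$, so parametrizing each cap over its tangent plane and applying Cauchy--Schwarz in the new coordinates would yield
\EQQS{
\left| \int_{\tilde S_1^\alpha\times \tilde S_2^\beta} f_1 f_2 f_3\,d\mu_1 d\mu_2 \right| \lesssim d^{-1/2} \|f_1\|_{L^2(\tilde S_1^\alpha)} \|f_2\|_{L^2(\tilde S_2^\beta)} \|f_3\|_{L^2(\tilde S_3^\gamma)}.
}
The $O(1)$ overlap noted above then permits an $\ell^2$ almost-orthogonality summation over $\alpha,\beta,\gamma$, which collapses to the global trilinear bound.

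The hard part will be this final step, where the curved cap must be replaced by its tangent plane with a multiplicative error that is uniformly bounded as $\delta\to 0$. This is precisely where the $C^{1,1}$ regularity (ii) is essential: it forces the deviation of the surface from its tangent plane to be genuinely second order in $\delta$, so the error is $O(\delta^2)$ and hence negligible relative to the Jacobian lower bound $|\det\tilde N|\geq 1/2$ supplied by (iii). Making this quantitative typically requires a Whitney-type dyadic induction on scales, as in \cite{BHT}; the boundary separation assumption in (i) and the normalization $\mathrm{diam}(\tilde S_i)\leq 1$ are the hypotheses that make this induction terminate with uniform constants.
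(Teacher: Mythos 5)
You should first note that the paper contains no proof of this proposition to compare against: it is imported verbatim as Corollary 1.5 of \cite{BHT}, and the only surrounding original content is the Remark explaining that condition (i) will be supplied by the implicit function theorem in the later applications (Propositions \ref{thm2.6} and \ref{thm2.8}). So what you have written is, in effect, a sketch of the argument of \cite{BHT} itself. At the level of strategy your outline does follow that route (duality, reduction by the linear map $T$ to the unit-scale transversal configuration, decomposition into almost flat caps, a local change of variables with Jacobian governed by the normal matrix plus Cauchy--Schwarz, almost orthogonality, induction on scales), but two points keep it from being a proof.

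First, the reduction to $T=\mathrm{Id}$ is not the exact cancellation you assert. Substituting $\sigma_i=T\tilde{\sigma_i}$ in the constrained trilinear form, each surface measure picks up the anisotropic factor $|\det T|\,|T^{-\top}\tilde{\mathfrak{n}}_i|$, while the unit normals of $S_i$ are $T^{-\top}\tilde{\mathfrak{n}}_i/|T^{-\top}\tilde{\mathfrak{n}}_i|$, so that $\det N$ differs from $\det \tilde{N}$ by the factor $\bigl(\det T\,\prod_i|T^{-\top}\tilde{\mathfrak{n}}_i|\bigr)^{-1}$. These factors do not simply balance; combining them correctly through the incidence (delta-constrained) measure is precisely what produces the constant $d^{-1/2}$ with $d$ computed on the \emph{image} surfaces $S_i$. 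If they cancelled as you claim and you could quote the unit-scale case with $d$ computed from $\tilde{N}\sim 1$, you would conclude a bound with an $O(1)$ constant in the transformed $L^2$ norms, which is false when $T$ is strongly anisotropic and the image normals are nearly parallel (the $d^{-1/2}$ loss is sharp). Second, the genuinely hard analytic content --- the unit-scale estimate for $C^{1,1}$ graphs, including the induction on scales that sums the caps and the limiting argument that makes the restriction of $f*g$ to $S_3$ a well-defined $L^2(S_3)$ function --- is deferred by you to ``as in \cite{BHT}'', i.e.\ cited rather than proved, which is circular if the aim is to prove the cited corollary. Since the paper also treats the result as a black box, this is acceptable for the paper's purposes, but as a standalone proof your text is a program rather than an argument.
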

\begin{rem}
As was mentioned in \cite{BHT}, the condition of $S_i^*$ \textnormal{(i)} is used only to ensure the existence of a global representation of 
$S_i$ as a graph. In the proof of Proposition \ref{thm2.6}, the implicit function theorem and the other conditions may show the existence 
of such a graph. Thus we will not treat the condition \textnormal{(i)} in the proof of Proposition \ref{thm2.6}.
\end{rem}
By utilizing Proposition \ref{prop2.7}, we verify Proposition \ref{thm2.6}. 
\begin{proof}[Proof of Proposition \ref{thm2.6}]
Let $\theta_0^\pm \in (0,\pi)$ be defined as $\cos \theta_0^\pm = \pm c$. We divide the proof into the following two cases:
\begin{align*}
\textnormal{(I)} \quad & |\angle(\xi, \xi_1) - \theta_0^+|>2^{10}(1-c)^{-1}A^{-3/4} \ \textnormal{and} \ 
 |\angle(\xi, \xi_1) - \theta_0^-|>2^{10}(1-c)^{-1}A^{-3/4},
\\
\textnormal{(I \hspace{-0.15cm}I)} \quad & |\angle(\xi, \xi_1) - \theta_0^+|\leq 2^{10}(1-c)^{-1}A^{-3/4} 
\  \ \textnormal{or} \ \ |\angle(\xi, \xi_1) - \theta_0^-|\leq 2^{10}(1-c)^{-1}A^{-3/4},
\end{align*}
where $\angle(\xi, \xi_1) \in (0, \pi)$ is the smaller angle between $\xi$ and $\xi_1$. 
We here assume that $A > 2^{20} (1-c)^{-2}$. If $A \leq 2^{20} (1-c)^{-2}$, the proposition is verified by the 
almost same proof as that for the case \textnormal{(I \hspace{-0.15cm}I)} below.

We first consider the case (I). The proof is very similar to that for $\pm_1 = \pm_2$. We utilize the following two estimates.
\begin{lem}\label{lem-a-modu}
Let $\tau = \tau_1 + \tau_2$, $\xi = \xi_1 +\xi_2$, $0<c<1$, $2^{20} (1-c)^{-2} < A \leq M_1$ and 
$\angle(\xi, \xi_1) $ satisfies \textnormal{(I)}. Then the following inequality holds:
\begin{equation*}
\textnormal{max}(\LR{\tau \pm c|\xi|} , \ \LR{\tau_1 - |\xi_1|} , \ \LR{\tau_2 + |\xi_2|} ) \gtrsim A^{-3/4}|\xi|
\end{equation*}
\end{lem}
\begin{proof}
After rotation, we may assume that $\xi_1 = (|\xi_1|, 0)$ and $\xi = (|\xi| \cos \theta, |\xi| \sin \theta)$ with 
$\theta := \angle(\xi, \xi_1) \in (0, \pi)$. By the simple calculation, we have
\begin{align*}
& \textnormal{max}(  \LR{\tau \pm c|\xi|} , \ \LR{\tau_1 - |\xi_1|} ,  \ \LR{\tau_2 + |\xi_2|} ) \\
\geq & |\pm c |\xi| +  |\xi_1| - |\xi_2||\\
= & \left| \pm c |\xi| +  |\xi_1| - \sqrt{|\xi_1|^2 - 2 |\xi||\xi_1|\cos \theta + |\xi|^2} \right|\\
\geq & \left| \pm c |\xi| + |\xi| \cos \theta \right| - 
\left| \frac{2 |\xi| |\xi_1| \cos \theta - |\xi|^2}{ |\xi_1| + \sqrt{|\xi_1|^2 - 2 |\xi||\xi_1|\cos \theta + |\xi|^2}}
- |\xi|\cos \theta \right|\\
=: & \  K_1 - K_2.
\end{align*}
From $\theta_0^\pm, \theta \in (0, \pi)$ and \textnormal{(I)}, we get
\begin{align*}
K_1  = |\xi||\cos \theta_0^\mp - \cos \theta| 
& \geq |\xi| \ \frac{\sqrt{1-c}}{4} \ |\theta_0^\mp - \theta|\\
& \geq 2^8 (1-c)^{-\frac{1}{2}}|\xi|A^{-\frac{3}{4}}.
\end{align*}
From $2^{20} (1-c)^{-2} < A \leq M_1$, we have
\begin{align*}
K_2 & = 
\left| \frac{2 |\xi| |\xi_1| \cos \theta - |\xi|^2}{ |\xi_1| + \sqrt{|\xi_1|^2 - 2 |\xi||\xi_1|\cos \theta + |\xi|^2}}
- |\xi|\cos \theta \right|\\
& \leq  
\left| \frac{2 |\xi| |\xi_1| \cos \theta}{ |\xi_1| + \sqrt{|\xi_1|^2 - 2 |\xi||\xi_1|\cos \theta + |\xi|^2}}
- \frac{2|\xi| |\xi_1| \cos \theta}{2 |\xi_1|} \right| + \frac{|\xi|^2}{|\xi_1|}\\
& \leq 
\frac{|\xi| |\xi_1| | \cos \theta|}{|\xi_1| ( \sqrt{|\xi_1|^2 - 2 |\xi||\xi_1|\cos \theta + |\xi|^2})} 
\left| |\xi_1| - \sqrt{|\xi_1|^2 - 2 |\xi||\xi_1|\cos \theta + |\xi|^2} \right|  + \frac{|\xi|^2}{|\xi_1|}\\
& \leq 
4 \frac{|\xi|^2}{|\xi_1|} \leq 2^{10} (1-c)^{-1} |\xi| A^{-1} \\
& \leq 2^5 (1-c)^{-\frac{1}{2}}|\xi|A^{-\frac{3}{4}}.
\end{align*}
From above, we have
\begin{equation*}
K_1-K_2 \gtrsim |\xi| A^{-\frac{3}{4}}.
\end{equation*}
This completes the proof.
\end{proof}
\begin{lem}\label{lem-a-bilinear}
Let $g_1$, $g_2 \in L^2$ be satisfy
\begin{equation*}
\operatorname{supp} g_1 \subset {\mathfrak{D}}_{j_1}^A \cap K^{-}_{N_1,L_1}, \quad \operatorname{supp} g_2 \subset {\mathfrak{D}}_{j_2}^A \cap K^{+}_{N_2,L_2},
\end{equation*}
and $64 \leq N_0 \lesssim N_1 \sim N_2$, \ $64 \leq A \leq M_1$, \ $16 \leq |j_1 - j_2 \pm A |\leq 32$. 
Then the following estimate holds;
\begin{align*}
& \| \chi_{K^{\pm_0}_{N_0,L_0}}  \left( g_1 * g_2  \right) \|_{L_{\xi,\tau}^2} 
\lesssim (A N_0 L_1 L_2 )^{\frac{1}{2}} \|g_1\|_{ L_{\xi, \tau}^2} 
\|g_2 \|_{ L_{\xi, \tau}^2}
\end{align*}
\end{lem}
\begin{proof}
By the same way as in the proof of Proposition \ref{thm2.2}, we observe that the desired estimate 
is proved by
\begin{equation}
\sup_{\tau,\xi} |E(\tau, \xi) | \lesssim A N_0 L_1 L_2\label{goal001-lem-a-bilinear}
\end{equation}
where
\begin{equation*} E(\tau, \xi) 
 := \left\{ (\tau_1, \xi_1)  \in  {\mathfrak{D}}_{0}^{A} \cap C_{N_0}(\xi') \ \left| \ 
\begin{aligned} & \LR{\tau - \tau_1 - |\xi - \xi_1|} \sim L_1, \ \LR{\tau_1 + |\xi_1|} \sim L_2,\\ 
& (\tau-\tau_1, \xi-\xi_1) \in {\mathfrak{D}}_{j_2}^{A}.
 \end{aligned} \right.
\right\}
\end{equation*}
with $16 \leq |j_2 \pm A| \leq 32$ and fixed $\xi' \in \R^2$. 
From $\LR{\tau - \tau_1 - |\xi - \xi_1|} \sim L_1$ and $\LR{\tau_1 + |\xi_1|} \sim L_2$, for fixed $\xi_1$, 
\begin{equation}
| \{ \tau_1 \ | \ (\tau_1, \xi_1) \in E(\tau, \xi) \} | \lesssim L_{\textnormal{min}}^{12}.\label{ele1-lem-a-bilinear}
\end{equation}
It follows from $(\tau_1, \xi_1) \in {\mathfrak{D}}_0^A$ and 
$(\tau-\tau_1, \xi-\xi_1) \in {\mathfrak{D}}_{j_2}^{A}$ that
\begin{align}
|\partial_2 (\tau - |\xi_1| + |\xi-\xi_1|)| & \geq   \left| \frac{(\xi_1)_2}{|\xi_1|}  + 
\frac{(\xi - \xi_1)_2}{|\xi - \xi_1|} \right| \notag \\
& \gtrsim A^{-1}. \label{ele2-lem-a-bilinear}
\end{align}
Combining $|\tau - |\xi_1| + |\xi-\xi_1|| \lesssim L_{\textnormal{max}}^{12}$ with (\ref{ele2-lem-a-bilinear}), 
for fixed $(\xi_1)_1$ we have
\begin{equation}
|\{ (\xi_1)_2 \ | \  (\tau_1, \xi_1) \in E(\tau, \xi) \} | 
\lesssim A L_{\textnormal{max}}^{12}\label{ele3-lem-a-bilinear}.
\end{equation}
Collecting (\ref{ele1-lem-a-bilinear}), (\ref{ele3-lem-a-bilinear}) and $\xi_1 \in  C_{N_0}(\xi') $, we get (\ref{goal001-lem-a-bilinear}).
\end{proof}
We now prove Proposition \ref{thm2.6} for the case (I). 
From Lemma \ref{lem-a-modu}, it holds that $L_{\textnormal{max}}^{012} \gtrsim A^{-\frac{3}{4}} 
N_0$. We decompose the proof into the three cases:\\
\qquad (Ia) $A^{-\frac{3}{4}} N_0 \lesssim L_0$, \qquad (Ib) $A^{-\frac{3}{4}} N_0 \lesssim L_1$, \qquad (Ic) $A^{-\frac{3}{4}} N_0 \lesssim L_2.$\\
(Ia) From H\"{o}lder inequality and Lemma \ref{lem-a-bilinear}, we have
\begin{align*}
\left| \int f(\tau, \xi) g_1(\tau_1, \xi_1) g_2 (\tau_2, \xi_2) d\tau_1 d\tau_2 d\xi_1 d \xi_2 \right| & 
\lesssim \| f\|_{{L_{\xi,\tau}^2}} \|g_1 * g_2 \|_{{L_{\xi,\tau}^2}} \notag \\
& \lesssim  A^{\frac{7}{8}} (L_0 L_1 L_2)^\frac{1}{2}
\|f\|_{{L_{\xi,\tau}^2}} \|g_1 \|_{{L_{\xi,\tau}^2}} \|g_2 \|_{{L_{\xi,\tau}^2}}.
\end{align*}
(Ib) From H\"{o}lder inequality and Lemma \ref{thm2.2}, we have
\begin{align*}
\left| \int f(\tau, \xi) g_1(\tau_1, \xi_1) g_2 (\tau_2, \xi_2) d\tau_1 d\tau_2 d\xi_1 d \xi_2 \right| & 
\lesssim \| g_1\|_{{L_{\xi,\tau}^2}} \|f * g_{2,-} \|_{{L_{\xi,\tau}^2}} \notag \\
& \lesssim  A^{\frac{3}{8}} (L_0 L_1 L_2)^\frac{1}{2}
\|f\|_{{L_{\xi,\tau}^2}} \|g_1 \|_{{L_{\xi,\tau}^2}} \|g_2 \|_{{L_{\xi,\tau}^2}}.
\end{align*}
(Ic) From H\"{o}lder inequality and Lemma \ref{thm2.2}, we have
\begin{align*}
\left| \int f(\tau, \xi) g_1(\tau_1, \xi_1) g_2 (\tau_2, \xi_2) d\tau_1 d\tau_2 d\xi_1 d \xi_2 \right| & 
\lesssim \| g_2\|_{{L_{\xi,\tau}^2}} \|f * g_{1,-} \|_{{L_{\xi,\tau}^2}} \notag \\
& \lesssim  A^{\frac{3}{8}} (L_0 L_1 L_2)^\frac{1}{2}
\|f\|_{{L_{\xi,\tau}^2}} \|g_1 \|_{{L_{\xi,\tau}^2}} \|g_2 \|_{{L_{\xi,\tau}^2}}.
\end{align*}
Here $g_{j,-}$ is defined as $g_{j,-} (\cdot) = g_{j}(- \cdot).$ 

We next consider the case \textnormal{(I \hspace{-0.15cm}I)}. We apply the same strategy as 
that of the proof of 
Proposition 4.4 in \cite{BHHT}. 
Applying the transformation $\tau_1 = |\xi_1| + c_1$ and $\tau_2 = - |\xi_2| + c_2$ and Fubini's theorem, we find that it suffices to prove
\begin{align}
& \left| \int f (\phi_{c_1}^+ (\xi_1) + \phi_{c_2}^- (\xi_2))  g_1 (\phi_{c_1}^+ (\xi_1) ) g_2 (\phi_{c_2}^-(\xi_2)) d \xi_1d\xi_2 \right| \notag \\
&\qquad \qquad \qquad \qquad \qquad \qquad  \lesssim  A^{\frac{7}{8}} \| g_1 \circ \phi_{c_1}^+\|_{L_\xi^2} \|g_2 \circ \phi_{c_2}^- \|_{L_\xi^2} \|f \|_{L_{\xi, \tau}^2}, 
\end{align}
where $f(\tau, \xi)$ is supported in $c_0 \leq \tau \pm c |\xi| \leq c_0 +1$ and 
\begin{equation*}
\phi_{c_k}^{\pm} (\xi) = (\pm |\xi|+ c_k, \xi) \quad \textnormal{for} \ k=1,2.
\end{equation*}
First we decompose $f$ by angular localization characteristic functions $\left\{ \chi_{{\mathfrak{D}}_{j}^{A_1} } \right\}_{j = -A_1}^{A_1+1}$ where 
$A_1$ is the minimal dyadic number which satisfies $A_1 \geq 2^{20} (1-c)^{-2} A$ and thickened circular 
localization characteristic functions $\left\{ \chi_{\mathbb{S}_{\delta}^{{N_0} + k \delta}} \right\}_{k= - \left[\frac{N_0}{2 \delta}\right]}^{\left[\frac{N_0}{\delta}\right]+1}$ 
where $[s]$ denotes the maximal integer which is not greater than $s \in \R$ and $\mathbb{S}_\delta^{\xi^0} = \{ (\tau, \xi ) \in \R \cross \R^2 \ | \ \xi^0 \leq |\xi|\leq \xi^0 + \delta \}$ with 
$\delta = 2^{-20} (1-c)^2 N_0 A^{-1/2}$ as follows:
\begin{equation*}
f = \sum_{k=- \left[\frac{N_0}{2 \delta}\right]}^{\left[\frac{N_0}{\delta}\right]+1} \sum_{\tiny{ j = - A_1}}^{A_1 +1 }
\chi_{\mathbb{S}_{\delta}^{N_0 + k \delta} }\chi_{{\mathfrak{D}}_{j}^{A_1}} f.
\end{equation*}
From the assumption \textnormal{(I \hspace{-0.15cm}I)}, we see that the sum of 
$(k, \, j)$ is $\sim A^{\frac{3}{4}}$. Therefore we only need to verify 
\begin{equation}
 \left| \int f  (\phi_{c_1}^+ (\xi_1) + \phi_{c_2}^- (\xi_2))  g_1 (\phi_{c_1}^+ (\xi_1) ) g_2 (\phi_{c_2}^-(\xi_2)) d \xi_1d\xi_2 \right| \lesssim  A^{\frac{1}{2}} \| g_1 \circ \phi_{c_1}^+\|_{L_\xi^2} \|g_2 \circ \phi_{c_2}^- \|_{L_\xi^2} \| f \|_{L_{\xi, \tau}^2},\label{goal-thm2.6}
\end{equation}
for $\operatorname{supp} f \subset {\mathfrak{D}}_{j}^{A_1} \cap 
\mathbb{S}_\delta^{N_0 + k \delta}$ with fixed $k \in [ - [N_0/2\delta],[N_0/\delta]+1]$, $j \in [-A_1, A_1+1]$. 
We use the scaling $(\tau, \ \xi) \to (N_0 \tau , \ N_0 \xi)$ to define
\begin{equation*}
\tilde{f} (\tau, \xi) = f (N_0 \tau, N_0 \xi), \quad \tilde{g_k} (\tau_k, \xi_k) = g_k (N_0 \tau_k, N_0 \xi_k).
\end{equation*}
If we set $\tilde{c_k} = N_0^{-1} c_k$, inequality (\ref{goal-thm2.6}) reduces to
\begin{equation}
 \left| \int \tilde{f}  (\phi_{\tilde{c_1}}^+ (\xi_1) + \phi_{\tilde{c_2}}^- (\xi_2)) 
\tilde{g_1} (\phi_{\tilde{c_1}}^+ (\xi_1) ) \tilde{g_2} (\phi_{\tilde{c_2}}^-(\xi_2)) d \xi_1d\xi_2 \right| \lesssim  A^{\frac{1}{2}} N_0^{-\frac{1}{2}} \| \tilde{g_1} 
\circ \phi_{\tilde{c_1}}^+\|_{L_\xi^2} \|\tilde{g_2}\circ \phi_{\tilde{c_2}}^- \|_{L_\xi^2} \| \tilde{f} \|_{L_{\xi, \tau}^2}.\label{goal2-thm2.6}
\end{equation}
Note that $\tilde{f}$ is supported in $S_3^\mp(N_0^{-1})$ where 
\begin{equation*}
S_3^\mp (N_0^{-1}) = \left\{ (\tau, \xi) \in {\mathfrak{D}}_{j}^{A_1} \cap \mathbb{S}_{ \tilde{\delta}}^{1 + k \tilde{\delta}} 
\ | \ \mp c |\xi| + \frac{c_0}{N_0} \leq \tau \leq \mp c |\xi| +\frac{c_0+1}{N_0} \right\}
\end{equation*}
with $\tilde{\delta} = N_0^{-1} \delta$. 
Thus from the $\ell^2$ almost orthogonality, we may assume that there exist $\xi_1^0$, $\xi_2^0$ such that
\begin{equation}
\frac{N_1}{2N_0} \leq |\xi_1^0| \leq 4 \frac{N_1}{N_0} , \quad \frac{N_2}{2 N_0} \leq  |\xi_2^0| \leq 4\frac{N_2}{N_0} \label{ele1-thm2.7}
\end{equation}
such that space variables of 
$\operatorname{supp} \tilde{g_1} \circ \phi_{\tilde{c_1}}^+$ and $\operatorname{supp} \tilde{g_2} \circ \phi_{\tilde{c_2}}^-$ are contained in the balls 
$B_{\tilde{\delta}}(\xi_1^0)$ and $B_{\tilde{\delta}}(\xi_2^0)$, respectively. By density and duality it suffices to show for continuous 
$\tilde{g_1}$ and $\tilde{g_2}$ that
\begin{equation}
\| \tilde{g_1} |_{S_1} * \tilde{g_2} |_{S_2} \|_{L^2(S_3^\pm (N_0^{-1}))} \lesssim A^{\frac{1}{2}} N_0^{-\frac{1}{2}} 
\| \tilde{g_1} \|_{L^2(S_1)} \| \tilde{g_2} \|_{L^2(S_2)}\label{goal3-thm2.7}
\end{equation}
where $S_1$, $S_2$ denote the following surfaces 
\begin{align*}
S_1 = \{ \phi_{\tilde{c_1}}^+ (\xi_1)\in \R^3 \ | \ \xi_1 \in B_{\tilde{\delta}}(\xi_1^0) \}, \\
S_2 = \{ \phi_{\tilde{c_2}}^- (\xi_2)\in \R^3 \ | \ \xi_2 \in B_{\tilde{\delta}}(\xi_2^0) \}.
\end{align*}
(\ref{goal3-thm2.7}) is immediately established from
\begin{equation}
\| \tilde{g_1} |_{S_1} * \tilde{g_2} |_{S_2} \|_{L^2(S_3^\pm)} \lesssim A^{\frac{1}{2}}  
\| \tilde{g_1} \|_{L^2(S_1)} \| \tilde{g_2} \|_{L^2(S_2)}\label{goal4-thm2.7}
\end{equation}
where 
\begin{equation*}
S_3^\mp = \{ (\psi^\mp (\xi), \xi) \in {\mathfrak{D}}_{j}^{A_1} \cap \mathbb{S}_{ \tilde{\delta}}^{1 + k \tilde{\delta}} \ | \ \psi^\mp(\xi) =  \mp c |\xi|+ \frac{c_0}{N_0} \}.
\end{equation*}
For any $\sigma_i \in S_i$, $i=1,2,3$, there exist $\xi_1$, $\xi_2$, $\xi$ such that
\begin{equation*}
\sigma_1=\phi_{\tilde{c_1}}^+ (\xi_1), \quad \sigma_2 =  \phi_{\tilde{c_2}}^+ (\xi_2), \quad \sigma_3 = (\psi (\xi), \xi),
\end{equation*}
and the unit normals $\mathfrak{n}_i$ on $\sigma_i$ are written as
\begin{align*}
& \mathfrak{n}_1(\sigma_1) = \frac{1}{\sqrt{2}} \left(-1, \ \frac{(\xi_1)_1}{|\xi_1|}, \ \frac{(\xi_1)_2}{|\xi_1|} \right), \\
& \mathfrak{n}_2 (\sigma_2) = \frac{1}{\sqrt{2}} \left(1, \ \frac{(\xi_2)_1}{|\xi_2|}, \ \frac{(\xi_2)_2}{|\xi_2|} \right), \\
& \mathfrak{n}_3 (\sigma_3) = \frac{1}{\sqrt{c^2 +1}} \left( \pm 1, \ c\frac{(\xi)_1}{|\xi|}, \  c\frac{(\xi)_2}{|\xi|} \right).
\end{align*}
We deduce from $1 \lesssim |\xi|$ and \eqref{ele1-thm2.7} that the surfaces $S_1$, $S_2$, $S_3^\mp$ satisfy the following 
H\"{o}lder condition.
\begin{align}
\sup_{\sigma_i, \sigma_i' \in S_i} \frac{|\mathfrak{n}_i(\sigma_i) - 
\mathfrak{n}_i(\sigma_i')|}{|\sigma_i - \sigma_i'|}
+ \frac{|\mathfrak{n}_i(\sigma_i) (\sigma_i - \sigma_i')|}{|\sigma_i - \sigma_i'|^2} \lesssim 1,\label{aiueo1}\\
\sup_{\sigma_3, \sigma_3' \in S_3^\pm} \frac{|\mathfrak{n}_3(\sigma_3) - 
\mathfrak{n}_3(\sigma_3')|}{|\sigma_3 - \sigma_3'|}
+ \frac{|\mathfrak{n}_3 (\sigma_3) (\sigma_3 - \sigma_3')|}{|\sigma_3 - \sigma_3'|^2} \lesssim 1\label{aiueo2}.
\end{align}
We may assume that there exist $\xi_1', \xi_2', \xi' \in \R^2$ such that
\begin{equation*}
\xi_1' + \xi_2' = \xi', \quad \phi_{\tilde{c_1}}^+ (\xi_1') \in S_1, \ \phi_{\tilde{c_2}}^- (\xi_2') \in S_2, \ (\psi^\mp (\xi'), \xi') \in S_3^\mp,
\end{equation*}
otherwise the left-hand side of (\ref{goal3-thm2.7}) vanishes. 
Let $\sigma_1' = \phi_{\tilde{c_1}}^+(\xi_1')$, $\sigma_2' = \phi_{\tilde{c_2}}^- (\xi_2')$, $\sigma_3' =  (\psi^\mp(\xi'), \xi') $. 
For any $\sigma_1 = \phi_{\tilde{c_1}}^+(\xi_1) \in S_1$, we deduce from $\xi_1$, $\xi_1' \in B_{\tilde{\delta}}(\xi_1^0)$ 
and $A \leq M_1 \leq 2^{10} (1-c)^{-1} N_1/N_0$  that
\begin{equation}
|\mathfrak{n}_1(\sigma_1) - \mathfrak{n}_1(\sigma_1')| \leq 2^{-18} \frac{N_0}{N_1} (1-c)^2 A^{-\frac{1}{2}} \leq 2^{-8} (1-c) A^{-\frac{3}{2}}.\label{elea-thm2.7}
\end{equation}
Similarly, for any $\sigma_2 = \phi_{\tilde{c_2}}^- (\xi_2) \in S_2$ we have
\begin{equation}
|\mathfrak{n}_2(\sigma_2) - \mathfrak{n}_2(\sigma_2')| \leq 2^{-18} \frac{N_0}{N_2} (1-c)^2 A^{-\frac{1}{2}} \leq 2^{-8} (1-c) A^{-\frac{3}{2}}.\label{eleb-thm2.7}
\end{equation}
For any $\sigma_3 \in S_3^\mp$, it follows from $S_3^\mp \subset {\mathfrak{D}}_{j}^{A_1} $ that 
\begin{equation}
|\mathfrak{n}_3(\sigma_3) - \mathfrak{n}_3(\sigma_3')| \leq 2^{-10} (1-c) A^{-1}.\label{elec-thm2.7}
\end{equation}
It is obvious that $|\sigma_1- \sigma_1'|$, $|\sigma_2- \sigma_2'| \leq 2 \tilde{\delta} \leq 2^{-10} (1-c)^{2}A^{-1/2}$ holds, then we get from 
(\ref{elea-thm2.7}) and (\ref{eleb-thm2.7}) that
\begin{align}
|(\sigma_1 - \sigma_1') \cdot \mathfrak{n}_1(\sigma_1')| \leq 2^{-15} (1-c)^2 A^{-2},\label{eleaa-thm2.7}\\
|(\sigma_2 - \sigma_2') \cdot \mathfrak{n}_2(\sigma_2')| \leq 2^{-15} (1-c)^2 A^{-2}.\label{eleab-thm2.7}
\end{align}
Similarly, we deduce from $\left|\sigma_3 - \frac{|\sigma_3|}{|\sigma_3'|} \sigma_3'\right| \leq 2^{-10}(1-c)^2 A^{-1}$ and 
(\ref{elec-thm2.7}) that
\begin{equation}
|(\sigma_3 - \sigma_3') \cdot \mathfrak{n}_3 (\sigma_3')| = 
\left| \left( \sigma_3 - \frac{|\sigma_3|}{|\sigma_3'|} \sigma_3' \right) \cdot \mathfrak{n}_3(\sigma_3') \right| \leq 
2^{-15} (1-c)^2 A^{-2}.\label{eleac-thm2.7}
\end{equation}
(\ref{eleaa-thm2.7}) means that $S_1$ is contained in an slab of thickness $2^{-15} (1-c)^2 A^{-2}$ with respect to the 
$\mathfrak{n}_1(\sigma_1')$ direction. 
From $\ell^2$ orthogonality, we may assume that $S_2$ and $S_3$ are also contained in similar $2^{-15} (1-c)^2 A^{-2}$ thick slabs;
\begin{align*}
|(\sigma_2 - \sigma_2') \cdot \mathfrak{n}_1 (\sigma_1')| \leq 2^{-15} (1-c)^2 A^{-2},\\
|(\sigma_3 - \sigma_3') \cdot \mathfrak{n}_1 (\sigma_1')| \leq 2^{-15} (1-c)^2 A^{-2}.
\end{align*}
Similarly, we may assume that surfaces $S_1$, $S_2$ are contained in slabs of thickness $2^{-15} (1-c)^2 A^{-2}$ with respect to the 
$\mathfrak{n}_2(\sigma_2')$ direction and the surfaces $S_1$, $S_2$ are contained in slabs of thickness $2^{-15} (1-c)^2 A^{-2}$ 
with respect to the $\mathfrak{n}_3(\sigma_3')$ direction. Collection the above assumptions, for $i,j=1,2,3$, 
\begin{equation}
|(\sigma_i - \sigma_i') \cdot \mathfrak{n}_j (\sigma_j')| \leq 2^{-15} (1-c)^2 A^{-2}.\label{ele01-thm2.7}
\end{equation}
We define $T \ : \ \R^3 \to \R^3$ as
\begin{equation*}
T= 2^{-10} (1-c)^{2} A^{-2}(N^\top)^{-1}, \qquad N = N(\sigma_1', \sigma_2', \sigma_3').
\end{equation*}
If the following conditions are established, we immediately obtain the desired estimate (\ref{goal4-thm2.7}) by applying Proposition \ref{prop2.7} with $T$ and $\tilde{S}_i := T^{-1} S_i \ (i = 1,2,3)$.
\begin{align*}
\textnormal{(I)} & \quad \frac{1-c}{2} A^{-1} \leq |\textnormal{det} N(\sigma_1, \sigma_2, \sigma_3)| \quad 
\textnormal{for any} \ \sigma_i \in S_i.\\
\textnormal{(I \hspace{-0.15cm}I)} & \quad \textnormal{diam}(\tilde{S_i}) <1.\\
\textnormal{(I \hspace{-0.16cm}I \hspace{-0.16cm}I)} &  \quad \frac{1}{2} \leq \textnormal{det} (\tilde{\mathfrak{n}_1}(\tilde{\sigma_1}), \tilde{\mathfrak{n}_2}(\tilde{\sigma_2}), 
\tilde{\mathfrak{n}_3}(\tilde{\sigma_3})) \leq 1 \quad \textnormal{for any} \ \tilde{\sigma_i} \in \tilde{S_i}.\\
\textnormal{(I \hspace{-0.18cm}V)} & \sup_{\tilde{\sigma}_i, \tilde{\sigma}_i^0 \in \tilde{S_i}} 
\frac{|\tilde{\mathfrak{n}_i}(\tilde{\sigma_i}) - \tilde{\mathfrak{n}_i}(\tilde{\sigma}_i^0)|}{|\tilde{\sigma_i}-\tilde{\sigma}_i^0|} 
+ \frac{|\tilde{\mathfrak{n}_i}(\tilde{\sigma}_i^0) \cdot (\tilde{\sigma_i}-\tilde{\sigma}_i^0) |}{|\tilde{\sigma_i}-\tilde{\sigma}_i^0|^2} \leq 1 \ 
\textnormal{for the unit normals} \ \tilde{\mathfrak{n}}_i \ \textnormal{on} \ \tilde{S}_i.
\end{align*}
We first show (I). From (\ref{elea-thm2.7})-(\ref{elec-thm2.7}) it suffices to show
\begin{equation}
(1-c)A^{-1} \leq |\textnormal{det} N(\sigma_1', \sigma_2', \sigma_3')| .
\end{equation}
Seeing that $\sigma_1' = \phi_{\tilde{c_1}}^+(\xi_1')$, $\sigma_2' = \phi_{\tilde{c_2}}^- (\xi_2')$, $\sigma_3' =  (\psi^\mp(\xi'), \xi') $ and 
$\xi_1'+\xi_2' = \xi'$, we get
\begin{align}
|\textnormal{det} N(\sigma_1', \sigma_2', \sigma_3')| \geq & 
\frac{1}{4} \left|\textnormal{det}
\begin{pmatrix}
-1 & 1 & \pm 1 \\
\frac{(\xi_1')_1}{|\xi_1'|}  & \frac{(\xi_2')_1}{|\xi_2'|} & c\frac{(\xi')_1}{|\xi'|} \\
\frac{(\xi_1')_2}{|\xi_1'|}  & \frac{(\xi_2')_2}{|\xi_2'|} & c\frac{(\xi')_2}{|\xi'|}
\end{pmatrix} \right| \notag \\
\geq & \frac{1}{4} \left| \frac{(\xi_1')_1 (\xi_2')_2 - (\xi_1')_2 (\xi_2')_2}{|\xi_1'||\xi_2'|} \right| 
\left(1 - c \left| \frac{|\xi_2'|}{|\xi'|} - \frac{|\xi_1'|}{|\xi'|} \right| \right) \notag\\
\geq & (1-c) A^{-1}.\label{ele002-thm2.7}
\end{align}
\textnormal{(I \hspace{-0.15cm}I)} is established from (\ref{ele01-thm2.7}).
\begin{align*}
|T^{-1} (\sigma_i - \sigma_i)| & =  2^{10} (1-c)^{-2} A^{2} \left|
\begin{pmatrix}
\mathfrak{n}_1  (\sigma_1') \cdot (\sigma_i - \sigma_i') \\
\mathfrak{n}_2  (\sigma_2') \cdot (\sigma_i - \sigma_i') \\
\mathfrak{n}_3  (\sigma_3') \cdot (\sigma_i - \sigma_i') 
\end{pmatrix} \right|\\
& \leq 2^{-3}< \frac{1}{2}.
\end{align*}
Next we show (I \hspace{-0.16cm}I \hspace{-0.16cm}I). Note that the unit normals $\tilde{\mathfrak{n}}_i$ on $\tilde{S}_i$ are written as follows.
\begin{equation*}
\tilde{\mathfrak{n}}_i (\tilde{\sigma_i}) =  \frac{(T^{-1})^\top \mathfrak{n}_i(T \tilde{\sigma_i}) }{|(T^{-1})^\top \mathfrak{n}_i(T \tilde{\sigma_i})|}
=  \frac{N^{-1} \mathfrak{n}_i(T \tilde{\sigma_i}) }{|N^{-1} \mathfrak{n}_i(T \tilde{\sigma_i})|}.
\end{equation*}
In particular, the unit normals on $T^{-1} \sigma_i'$  are the unit vectors $e_i$;
\begin{equation}
 \tilde{\mathfrak{n}}_i (T^{-1} \sigma_i') = N^{-1} \mathfrak{n}_i (\sigma_i') = e_i.\label{ele00a-thm2.7}
\end{equation}
From (\ref{ele002-thm2.7}), we get
\begin{equation}
\| N^{-1} \| = \| (N^\top)^{-1} \| \leq 2 |\textnormal{det} N^\top|^{-1} \|N^\top \|^2 \leq 12 (1-c)^{-1}A.\label{ele003-thm2.7}
\end{equation}
Thus we obtain
\begin{equation}
\|T \| \leq 2^{-6} (1-c) A^{-1}.\label{ele004-thm2.7}
\end{equation}
We deduce from (\ref{elea-thm2.7})-(\ref{elec-thm2.7}), (\ref{ele00a-thm2.7}), (\ref{ele003-thm2.7}) that
\begin{equation}
|N^{-1} \mathfrak{n}_i(T \tilde{\sigma_i}) - e_i| = |N^{-1} (\mathfrak{n}_i(T \tilde{\sigma_i}) - \mathfrak{n}_i (\sigma_i') )| \leq 2^{-7}.\label{ele005-thm2.7}
\end{equation}
This gives $|\tilde{\mathfrak{n}}_i(\tilde{\sigma_i}) - e_i|  \leq 2^{-5}$ and (I \hspace{-0.16cm}I \hspace{-0.16cm}I) is now obtained. 
Finally we show (I \hspace{-0.18cm}V). It follows from (\ref{ele003-thm2.7})-(\ref{ele005-thm2.7}) that
\begin{align*}
\frac{|\tilde{\mathfrak{n}_i}(\tilde{\sigma_i}) - \tilde{\mathfrak{n}_i}(\tilde{\sigma}_i^0)|}{|\tilde{\sigma_i}-\tilde{\sigma}_i^0|} 
& \leq 3 \frac{|N^{-1} ( \mathfrak{n}_i(T \tilde{\sigma_i}) - {\mathfrak{n}_i}(T \tilde{\sigma}_i^0))|}{|\tilde{\sigma_i}-\tilde{\sigma}_i^0|} \\
& \leq 3 \|N^{-1}\| \|T \|  \frac{|\mathfrak{n}_i(T \tilde{\sigma_i}) - {\mathfrak{n}_i}(T \tilde{\sigma}_i^0)|}{|T\tilde{\sigma_i}-T\tilde{\sigma}_i^0|} 
\lesssim 1.
\end{align*}
The last inequality is verified from \eqref{aiueo1} and \eqref{aiueo2}. 
Similarly, from (\ref{ele004-thm2.7}) and $ (T^{-1})^\top N^{-1} = 2^{10} (1-c)^{-2} A^2 E$ we have 
\begin{align*}
\frac{|\tilde{\mathfrak{n}_i}(\tilde{\sigma}_i^0) \cdot (\tilde{\sigma_i}-\tilde{\sigma}_i^0) |}{|\tilde{\sigma_i}-\tilde{\sigma}_i^0|^2}
& \leq 2 \|T \|^2 \frac{|N^{-1}{\mathfrak{n}_i}(T \tilde{\sigma}_i^0) \cdot (T^{-1} T\tilde{\sigma_i}- 
T^{-1} T \tilde{\sigma}_i^0) |}{|T \tilde{\sigma_i}- T \tilde{\sigma}_i^0|^2}\\
& \leq 2 \|T \|^2 \frac{| (T^{-1})^\top N^{-1}{\mathfrak{n}_i}(T \tilde{\sigma}_i^0) \cdot (T\tilde{\sigma_i}- 
 T \tilde{\sigma}_i^0) |}{|T \tilde{\sigma_i}- T \tilde{\sigma}_i^0|^2}\\
& \leq \frac{1}{2}  \frac{|{\mathfrak{n}_i}(T \tilde{\sigma}_i^0) \cdot (T\tilde{\sigma_i}- 
 T \tilde{\sigma}_i^0) |}{|T \tilde{\sigma_i}- T \tilde{\sigma}_i^0|^2} \lesssim 1.
\end{align*}
This completes  (I \hspace{-0.18cm}V).
\end{proof}
We now consider $0 \leq \angle (\xi_1, \ \xi_2) \leq \pi/2$. First we show the estimate 
which is similar to Proposition \ref{thm2.6} for 
$64 \leq A \leq N_0^{\frac{1}{2}}$ and $16 \leq |j_1 - j_2|\leq 32$. In this case, thanks to $0 \leq \angle (\xi_1, \ \xi_2) \leq \pi/2$, 
$N_0 \sim N_1 \sim N_2$ always holds true and we can obtain the better estimates compared to 
Proposition \ref{thm2.6}.
\begin{prop}\label{thm2.8}
Let $\tau = \tau_1 + \tau_2$, $\xi = \xi_1 +\xi_2$, $0<c<1$ and $f$, $g_1$, $g_2 \in L^2$ be satisfy
\begin{equation*}
\operatorname{supp} f \subset K^{\pm,c}_{N_0,L_0}, \quad \operatorname{supp} g_1 \subset {\mathfrak{D}}_{j_1}^A \cap K^{-}_{N_1,L_1}, \quad 
 \operatorname{supp} g_2 \subset {\mathfrak{D}}_{j_2}^A \cap K^{+}_{N_2,L_2},
\end{equation*}
and $N_0 \sim N_1 \sim N_2$, \ $64 \leq A \leq N_0^{\frac{1}{2}} $, \ $16 \leq |j_1 - j_2 |\leq 32$. 
Then the following estimate holds:
\begin{equation}
\left| \int f(\tau, \xi) g_1(\tau_1, \xi_1) g_2 (\tau_2, \xi_2) d\tau_1 d\tau_2 d\xi_1 d \xi_2 \right|
\lesssim A^{\frac{1}{2}} (L_0 L_1 L_2)^\frac{1}{2} 
\|f\|_{{L_{\xi,\tau}^2}} \|g_1 \|_{{L_{\xi,\tau}^2}} \|g_2 \|_{{L_{\xi,\tau}^2}}.\label{goal000-thm2.8}
\end{equation}
\end{prop}
\begin{proof}
The proof is almost analogous to that of Proposition \ref{thm2.6}. 
Difference between them is a step of decomposition. Precisely, in the proof of 
Proposition \ref{thm2.6}, we decomposed $f$ into $\sim A^\frac{3}{4}$ pieces. We here decompose functions into finitely many pieces. 
From $\operatorname{supp} g_1 \subset {\mathfrak{D}}_{j_1}^A$, $\operatorname{supp} g_2 \subset {\mathfrak{D}}_{j_2}^A$ and $16 \leq |j_1 - j_2 |\leq 32$, after 
suitable and harmless decomposition, we can assume that 
there exists $j$ such that $16 \leq |j_1 - j | \leq 32$ and $\operatorname{supp} f \in {\mathfrak{D}}_{j}^A$. Furthermore we decompose $f$, $g_1$, $g_2$ into 
finitely many pieces as follows:
\begin{equation*}
f=\sum_{j'=j^0}^{j^0+k} \chi_{{\mathfrak{D}}_{j'}^{A_1}}f, \qquad 
g_1=\sum_{j_1'=j_1^0}^{j_1^0+k} \chi_{{\mathfrak{D}}_{j_1'}^{A_1}}g_1, \qquad
g_2=\sum_{j_2' =j_2^0}^{j_2^0+k} \chi_{{\mathfrak{D}}_{j_2'}^{A_1}}g_2
\end{equation*}
where $k$ is the minimal dyadic number which satisfies $k \geq 2^{20} (1-c)^{-2}$, $A_1 := k A$ and $j^0, 
j_1^0, j_2^0$ satisfy
\begin{equation*}
\bigcup_{j^0 \leq j' \leq j^0+k} {\mathfrak{D}}_{j'}^{A_1} = {\mathfrak{D}}_{j}^A, \qquad
\bigcup_{j_1^0 \leq j_1' \leq j_1^0 +k} {\mathfrak{D}}_{j_1'}^{A_1} = {\mathfrak{D}}_{j_1}^A, \qquad
\bigcup_{j_2^0 \leq j_2' \leq j_2^0 +k} {\mathfrak{D}}_{j_2'}^{A_1} = {\mathfrak{D}}_{j_2}^A.
\end{equation*}
Thanks to the finiteness of $k$, it suffices to prove the desired estimate (\ref{goal000-thm2.8}) for
\begin{equation*}
\operatorname{supp} f \subset {\mathfrak{D}}_{j'}^{A_1}, \quad \operatorname{supp} g_1 \subset {\mathfrak{D}}_{j_1'}^{A_1}, \quad \operatorname{supp} g_2 \subset {\mathfrak{D}}_{j_2'}^{A_1}
\end{equation*}
with fixed $j' \in [j^0, j^0+k]$, $j_1' \in [j_1^0, j_1^0 +k]$, $j_2' \in [j_2^0, j_2^0+k]$. 

We utilize the same notations as in the proof of Proposition \ref{thm2.6}. By the same argument as of 
the proof of Proposition \ref{thm2.6}, we only need to verify the following estimate:
\begin{equation}
\| \tilde{g_1} |_{S_1} * \tilde{g_2} |_{S_2} \|_{L^2(S_3)} \lesssim A^{\frac{1}{2}}  
\| \tilde{g_1} \|_{L^2(S_1)} \| \tilde{g_2} \|_{L^2(S_2)}\label{goal00a-thm2.8}
\end{equation}
where 
\begin{align*}
S_1 &  = \left\{ \phi_{\tilde{c_1}}^+ (\xi_1)\in {\mathfrak{D}}_{j_1'}^{A_1} \ | \ \frac{1-c}{4} \leq |\xi_1| \leq 2  \right\}, \quad 
S_2  = \left\{ \phi_{\tilde{c_2}}^- (\xi_2)\in {\mathfrak{D}}_{j_2'}^{A_1} \ | \ \frac{1-c}{4} \leq |\xi_2| \leq 2 \right\},\\
S_3 & = \left\{ (\psi^\mp (\xi), \xi) \in {\mathfrak{D}}_{j'}^{A_1} \ | \ \frac{1}{2} \leq |\xi| \leq 4 , \  \psi^\mp(\xi) =  \mp c |\xi|+ \frac{c_0}{N_0} \right\}.
\end{align*}
We recall that the unit normals on $\sigma_i \in S_i$ ($i=1,2,3$) are written as;
\[
 \mathfrak{n}_1(\sigma_1) = \frac{1}{\sqrt{2}} \Bigl(-1, \ \frac{(\xi_1)_1}{|\xi_1|}, \ \frac{(\xi_1)_2}{|\xi_1|} \Bigr), \ \ 
 \mathfrak{n}_2 (\sigma_2) = \frac{1}{\sqrt{2}} \Bigl(1, \ \frac{(\xi_2)_1}{|\xi_2|}, \ \frac{(\xi_2)_2}{|\xi_2|} \Bigr), \ \ 
 \mathfrak{n}_3 (\sigma_3) = \frac{1}{\sqrt{c^2 +1}} \Bigl( \pm 1, \ c\frac{(\xi)_1}{|\xi|}, \  c\frac{(\xi)_2}{|\xi|} \Bigr).
\]
where 
\begin{equation*}
\sigma_1=\phi_{\tilde{c_1}}^+ (\xi_1), \quad \sigma_2 =  \phi_{\tilde{c_2}}^+ (\xi_2), \quad \sigma_3 = (\psi (\xi), \xi).
\end{equation*}
We may assume that there exist $\xi_1', \xi_2', \xi' \in \R^2$ such that
\begin{equation*}
\xi_1' + \xi_2' = \xi', \quad (\sigma_1' :=) \phi_{\tilde{c_1}}^+ (\xi_1') \in S_1, \ 
(\sigma_2' := ) \phi_{\tilde{c_2}}^- (\xi_2') \in S_2, \ (\sigma_3' :=)  (\psi^\mp (\xi'), \xi') \in S_3.
\end{equation*}
From $S_1\subset {\mathfrak{D}}_{j_1'}^{A_1}$, $S_2 \subset {\mathfrak{D}}_{j_2'}^{A_1}$ and $S_3 \subset {\mathfrak{D}}_{j'}^{A_1}$, we easily observe
\begin{align}
|\mathfrak{n}_1(\sigma_1) - \mathfrak{n}_1(\sigma_1')| \leq 2^{-10} (1-c) A^{-1},\label{elea-thm2.8}\\ 
|\mathfrak{n}_2(\sigma_2) - \mathfrak{n}_2(\sigma_2')| \leq 2^{-10} (1-c) A^{-1},\label{eleb-thm2.8} \\
|\mathfrak{n}_3(\sigma_3) - \mathfrak{n}_3(\sigma_3')| \leq 2^{-10} (1-c) A^{-1}.\label{elec-thm2.8}
\end{align}
The above estimates (\ref{elea-thm2.8})-(\ref{elec-thm2.8}) give 
\begin{align*}
|(\sigma_1 - \sigma_1') \cdot \mathfrak{n}_1 (\sigma_1')| = 
\left| \left( \sigma_1 - \frac{|\sigma_1|}{|\sigma_1'|} \sigma_1' \right) \cdot \mathfrak{n}_1(\sigma_1') \right| \leq 
2^{-20} (1-c)^2 A^{-2},\\
|(\sigma_2 - \sigma_2') \cdot \mathfrak{n}_2 (\sigma_2')| = 
\left| \left( \sigma_2 - \frac{|\sigma_2|}{|\sigma_2'|} \sigma_2' \right) \cdot \mathfrak{n}_2(\sigma_2') \right| \leq 
2^{-20} (1-c)^2 A^{-2},\\
|(\sigma_3 - \sigma_3') \cdot \mathfrak{n}_3 (\sigma_3')| = 
\left| \left( \sigma_3 - \frac{|\sigma_3|}{|\sigma_3'|} \sigma_3' \right) \cdot \mathfrak{n}_3 (\sigma_3') \right| \leq 
2^{-20} (1-c)^2 A^{-2}.
\end{align*}
By the same argument as in the proof of Proposition \ref{thm2.6}, we can assume
\begin{equation}
|(\sigma_i - \sigma_i') \cdot \mathfrak{n}_j (\sigma_j')| \leq 2^{-20} (1-c)^2 A^{-2} \quad \textnormal{for any} \ i,  j = 1,2,3.\label{ele01-thm2.8}
\end{equation}
The remaining part is only to prove (I)-(I \hspace{-0.18cm}V) in Proposition \ref{thm2.6} with 
\begin{equation*}
T= 2^{-10} (1-c)^{2} A^{-2}(N^\top)^{-1}, \qquad N = N(\sigma_1', \sigma_2', \sigma_3')
\end{equation*}
and $\tilde{S}_i := T^{-1} S_i \ (i = 1,2,3)$. (I)-(I \hspace{-0.18cm}V) are verified from  (\ref{elea-thm2.8})-(\ref{ele01-thm2.8}) as we 
proved in the proof of Proposition \ref{thm2.6}. To avoid redundancy, we omit the proof of them.
\end{proof}
Lastly, we consider the case of sufficiently small $\angle (\xi_1, \xi_2)$. 
\begin{prop}\label{prop2.10}
Let $\tau = \tau_1 + \tau_2$, $\xi = \xi_1 +\xi_2$, $0<c<1$ and $M_0$ is the minimal dyadic number which satisfies 
$N_0^{\frac{1}{2}} \leq M_0$. 
We assume that $f$, $g_1$, $g_2 \in L^2$ satisfy
\begin{equation*}
\operatorname{supp} f \subset K^{\pm,c}_{N_0,L_0}, \quad \operatorname{supp} g_1 \subset {\mathfrak{D}}_{j_1}^{M_0} \cap K^{-}_{N_1,L_1}, \quad 
 \operatorname{supp} g_2 \subset {\mathfrak{D}}_{j_2}^{M_0}\cap K^{+}_{N_2,L_2},
\end{equation*}
with $N_0 \sim N_1 \sim N_2$, \ $ |j_1 - j_2 |\leq 16$. 
Then the following estimate holds;
\begin{equation}
\left| \int f(\tau, \xi) g_1(\tau_1, \xi_1) g_2 (\tau_2, \xi_2) d\tau_1 d\tau_2 d\xi_1 d \xi_2 \right| 
\lesssim N_0^{\frac{1}{4}}  (L_0 L_{\textnormal{min}}^{12})^\frac{1}{2} 
\|f\|_{{L_{\xi,\tau}^2}} \|g_1 \|_{{L_{\xi,\tau}^2}} \|g_2 \|_{{L_{\xi,\tau}^2}}.\label{goal001-prop2.9}
\end{equation}
\end{prop}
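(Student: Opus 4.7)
The strategy will be to apply Cauchy--Schwarz in the trilinear integral with the function carrying the \emph{larger} of $L_1, L_2$ pulled outside, and then to control the resulting admissible set by a direct Fourier-geometric volume count. Unlike in Propositions~\ref{thm2.6} and \ref{thm2.8}, the transversality-based estimate of Proposition~\ref{prop2.7} cannot be invoked here: in the regime $|j_1 - j_2| \leq 16$ at scale $M_0 \sim N_0^{1/2}$, the unit normals to the two Klein--Gordon cones over $\xi_1$ and $\xi_2$ nearly coincide, so the Loomis--Whitney determinant $\det N(\sigma_1,\sigma_2,\sigma_3)$ degenerates and the factor $(\det N)^{-1/2}$ in Proposition~\ref{prop2.7} blows up. The weaker power $N_0^{1/4}$ in the claim (as opposed to $A^{1/2}$ in Propositions~\ref{thm2.6}--\ref{thm2.8}) already reflects this loss.

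Without loss of generality I would assume $L_1 \leq L_2$, so that $L_{\min}^{12} = L_1$; the opposite case is handled identically after swapping the roles of $g_1$ and $g_2$. Writing
\EQQS{
I = \int g_2(\ta_2, \xi_2)\, H(\ta_2, \xi_2)\, d\ta_2 d\xi_2, \qquad H(\ta_2, \xi_2) := \int f(\ta_1 + \ta_2, \xi_1 + \xi_2)\, g_1(\ta_1, \xi_1)\, d\ta_1 d\xi_1,
}
and applying Cauchy--Schwarz in $(\ta_1, \xi_1)$ restricted to the intersection of supports, followed by Cauchy--Schwarz in $(\ta_2, \xi_2)$, gives
\EQQS{
|I| \leq \Big( \sup_{(\ta_2, \xi_2)} |E(\ta_2, \xi_2)| \Big)^{1/2} \|f\|_{\Lxta}\|g_1\|_{\Lxta}\|g_2\|_{\Lxta},
}
where $E(\ta_2,\xi_2)$ is the set of $(\ta_1,\xi_1)\in\supp g_1$ with $(\ta_1+\ta_2,\xi_1+\xi_2)\in\supp f$. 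It therefore suffices to prove $\sup_{(\ta_2, \xi_2)} |E(\ta_2, \xi_2)| \lesssim N_0^{1/2} L_0 L_1$.

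To estimate $|E|$, I would freeze $(\ta_2, \xi_2)$ and first bound the $\ta_1$-fiber. The two conditions $\ta_1 = |\xi_1| + O(L_1)$ and $\ta_1 = -\ta_2 \mp c|\xi_1 + \xi_2| + O(L_0)$ confine $\ta_1$ to an interval of length $\lesssim \min(L_0, L_1) = L_1$. Subtracting them yields the remaining constraint on $\xi_1$:
\EQQS{
\ta_2 + |\xi_1| \pm c|\xi_1 + \xi_2| = O(L_0 + L_1).
}
Let $\omega$ denote the central direction of the sector $\Djao^{M_0}$ and decompose $\xi_1 = r\omega + s\omega^\perp$. The angular localization forces $|s| \lesssim N_0^{1/2}$, since vectors of radius $\sim N_0$ in a sector of width $\sim M_0^{-1} \sim N_0^{-1/2}$ have transverse component at most $N_0 \cdot N_0^{-1/2}$. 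Because $|j_1 - j_2| \leq 16$, both $\xi_1/|\xi_1|$ and $(\xi_1+\xi_2)/|\xi_1+\xi_2|$ lie within $O(N_0^{-1/2})$ of $\omega$, so
\EQQS{
\partial_r \big( |\xi_1| \pm c|\xi_1+\xi_2| \big) = 1 \pm c + O(N_0^{-1/2}) \gtrsim 1 - c.
}
This transversality in $r$ confines the admissible $\xi_1$ for each fixed $s$ to an interval of length $\lesssim L_0 + L_1$, and combining with $|s|\lesssim N_0^{1/2}$ gives $|E| \lesssim N_0^{1/2}(L_0 + L_1) L_1 \lesssim N_0^{1/2} L_0 L_1$, as required.

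The main technical point, and the one where care is needed, is the quantitative linearization $\nabla F \approx (1 \pm c)\omega + O(N_0^{-1/2})$: it relies on exactly the balance $M_0 \sim N_0^{1/2}$ in the hypothesis, because a thinner sector would force the transverse bound on $s$ to worsen, while a wider sector would spoil the lower bound on $|\partial_r F|$. The implicit $(1-c)^{-1}$ factors absorbed into $\lesssim$ are consistent with the convention adopted in Section~2. Once these geometric facts are in hand, the final estimate $|I|\lesssim N_0^{1/4}(L_0 L_{\min}^{12})^{1/2}\|f\|_{\Lxta}\|g_1\|_{\Lxta}\|g_2\|_{\Lxta}$ follows immediately.
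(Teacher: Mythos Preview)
Your approach is essentially identical to the paper's: the paper also assumes $L_1 \leq L_2$, pulls $g_2$ outside via H\"older, and reduces to the same measure bound $\sup |E(\tau,\xi)| \lesssim N_0^{1/2} L_0 L_1$, which it then verifies by invoking the proof of Proposition~\ref{thm2.2} with the transverse width $N_{\min}^{012}$ replaced by $N_1 M_0^{-1} \sim N_0^{1/2}$. One small slip: from $L_1 \leq L_2$ you cannot conclude $\min(L_0,L_1)=L_1$, so your displayed chain $N_0^{1/2}(L_0+L_1)L_1 \lesssim N_0^{1/2}L_0L_1$ fails when $L_0 < L_1$; just keep $\min(L_0,L_1)$ for the $\tau_1$-fiber and use $\min(L_0,L_1)(L_0+L_1)\sim L_0L_1$, and the argument goes through.
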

\begin{proof}
We can assume $L_1 \leq L_2$ by symmetry. 
By H\"{o}lder inequality, (\ref{goal001-prop2.9}) is established if we show
\begin{equation}
 \| \chi_{K^{+}_{N_2,L_2}} \left( \left( 
\chi_{K^{\pm_0, c}_{N_0,L_0}}f \right) * \left( \chi_{K^{\pm_1}_{N_1,L_1}}g
\right) \right) \|_{L_{\xi, \tau}^2} \lesssim (N_0^{\frac{1}{4}} L_0 L_1 )^{1/2} \|f\|_{ L_{\xi,\tau}^2} 
\|g \|_{ L_{\xi,\tau}^2}\label{goal002-prop2.9}
\end{equation}
regardless of the choice of the signs $\pm_0$, $\pm_1$. 
It is easily confirmed that (\ref{goal002-prop2.9}) can be verified by the proof of Proposition \ref{thm2.2} with minor modification. 
Indeed, same as in the proof of Proposition \ref{thm2.2}, we find that the desired estimate (\ref{goal002-prop2.9}) is shown by
\begin{equation}
\sup_{\tau,\xi} |E(\tau, \xi) | \lesssim N_0^{\frac{1}{2}} L_0 L_1\label{goal003-prop2.9}
\end{equation}
where $E(\tau, \xi) 
 := \{(\tau_1, \xi_1) \in  {\mathfrak{D}}_{0}^{M_0} \ | \ \LR{\tau - \tau_1 \pm_0 c |\xi - \xi_1|} \sim L_0, \LR{\tau_1 \pm_1 |\xi_1|} \sim L_1 \}.$ 
Applying the same proof as in Proposition \ref{thm2.2}, we immediately obtain (\ref{goal003-prop2.9}) thanks to 
$N_1 M_0^{-1} \sim N_0^{\frac{1}{2}}.$
\end{proof}
We now prove the crucial estimates (\ref{goal-1-2}) and (\ref{goal-2-2}) with $\pm_2 = +$ and $N_0 \lesssim N_1 \sim N_2$. 
\begin{thm}
Let $0<c<1$. 
For any $s \in (-3/4 , \, 0)$, there exists $b \in (1/2, \, 1)$ such that for $f,g_1, g_2 \in \mathcal{S}(\R \cross \R^2)$, the following estimates hold;
\begin{align}
& \sum_{N_1} \sum_{1\leq N_0 \lesssim N_1 \sim N_2}\sum_{L_j} \ I_1^+ \lesssim \|f \|_{\ha{X}^{s,\,b}_{\pm,\,c}} \|g_1 \|_{\ha{X}^{s,\,b}_{-}}\|g_2 \|_{\ha{X}^{-s,\,1-b-\e}_{+}},\label{goal001-thm2.10}\\
& \sum_{N_1} \sum_{1\leq N_0 \lesssim N_1 \sim N_2} \sum_{L_j} \ I_2^+ 
\lesssim \|f \|_{\ha{X}^{-s,\,1-b-\e}_{\pm,\,c}} \|g_1 \|_{\ha{X}^{s,\,b}_{-}}\|g_2 \|_{\ha{X}^{s,\,b}_{+}},\label{goal002-thm2.10}
\end{align}
where
\begin{align*}
I_1^+ := & \left|N_1^{-1} \int{ \left( \chi_{K^{\pm,c}_{N_0,L_0}} f \right) 
\left(
(\chi_{K^{-}_{N_1,L_1}} g_1) * (\chi_{K^{+}_{N_2,L_2}} g_2)
\right)} 
d\tau d\xi \right|,\\
I_2^+ := & \left|N_0 N_1^{-1}N_2^{-1}  \int{ \left( \chi_{K^{\pm,c}_{N_0,L_0}} f \right) 
\left(
(\chi_{K^{-}_{N_1,L_1}} g_1) * (\chi_{K^{+}_{N_2,L_2}} g_2)
\right)} 
d\tau d\xi \right|.
\end{align*}
\end{thm}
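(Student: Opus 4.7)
The plan is to dyadically localize the product $(P_{K^{\pm,c}_{N_0,L_0}} f)(P_{K^-_{N_1,L_1}} g_1)(P_{K^+_{N_2,L_2}} g_2)$ according to the angular separation $\angle(\xi_1, \xi_2)$, using the four-part decomposition of $\R^3 \cross \R^3$ introduced just before Lemma \ref{lem-modu1}, and then apply in each angular sector the strongest trilinear bound from Section 2. Because $I_1^+$ and $I_2^+$ differ only by the multiplier $N_0/(N_1 N_2)$ which is dominated by $N_1^{-1}$ in the regime $N_0 \lec N_1 \sim N_2$, and because exchanging $(s,b) \leftrightarrow (-s,1-b)$ swaps the roles of the $X$-spaces via duality, the two inequalities \eqref{goal001-thm2.10}--\eqref{goal002-thm2.10} reduce to essentially the same computation, and I focus on \eqref{goal001-thm2.10}.

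In the near-antipodal sector $|j_1 - j_2 \pm M_1| \leq 16$, Lemma \ref{lem-modu1} gives the resonance bound $L_{\textnormal{max}}^{012} \gec |\xi| \sim N_0$; placing the factor of largest modulation in $L^2$ and applying Proposition \ref{thm2.1} to the remaining pair produces a bound that is comfortable for all $s > -1/2$. In the intermediate antipodal sectors $64 \leq A \leq M_1$, I invoke Proposition \ref{thm2.6}: after extracting the $X^{s,b}$-weights, choosing $b$ slightly above $1/2$ so that the $L_j$-sums are finite, and using $M_1 \lec N_1/N_0$, the $A$-sum of the $A^{5/4}$ factor produces at most $(N_1/N_0)^{5/4}$, and the frequency/modulation weights combine into a bound of the form $N_1^{-1} (N_1/N_0)^{5/4} N_0^{-s}$ times the norms on the right-hand side.

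For the transverse sectors $0 \leq \angle(\xi_1,\xi_2) \leq \pi/2$, transversality forces $N_0 \sim N_1 \sim N_2$ and the range $64 \leq A \leq N_0^{1/2}$; Proposition \ref{thm2.8} replaces the $A^{5/4}$ of the previous case by $A^{1/2}$, so the $A$-sum costs only $N_0^{1/4}$ and the estimate is easily closed. Finally, for the nearly parallel sector $|j_1 - j_2| \leq 16$ with $A \sim M_0 \sim N_0^{1/2}$, Proposition \ref{prop2.10} provides the bound $N_0^{1/4}(L_0 L_{\textnormal{min}}^{12})^{1/2}$, which is handled analogously using $b > 1/2$. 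Throughout, $\ell^2$ almost orthogonality in the angular variables allows the finitely-overlapping angular pieces to be decoupled before summation.

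I expect the sole source of difficulty to be the intermediate antipodal case. The factor $A^{5/4}$ from the nonlinear Loomis--Whitney estimate Proposition \ref{thm2.6}, summed over $A \leq M_1 \sim N_1/N_0$, yields the $(N_1/N_0)^{5/4}$ loss, and its endpoint balance against the weights $N_0^{-s}$ and $N_1^{-1}$, combined with the summability of the remaining $N_0 \lec N_1$ double sum, is precisely what pins down the threshold $s > -13/18$. All other cases leave room to spare, and once this case is verified, \eqref{goal002-thm2.10} follows immediately from the same argument since the multiplier $N_0 N_1^{-1} N_2^{-1}$ is dominated by $N_1^{-1}$ and the exchange $(s,b) \leftrightarrow (-s, 1-b)$ is absorbed by dualizing the trilinear integral.
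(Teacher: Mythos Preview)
Your angular decomposition into the four sectors and your identification of the intermediate antipodal region as the decisive one both match the paper. There are, however, two genuine gaps.

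\textbf{The harder estimate is $I_2^+$, not $I_1^+$.} Your reduction of \eqref{goal002-thm2.10} to \eqref{goal001-thm2.10} via $N_0 N_1^{-1} N_2^{-1} \leq N_1^{-1}$ and ``duality'' does not work: the weights on the right-hand sides differ (the pair $(s,b)$ sits on $f$ in $I_1^+$ but on $g_2$ in $I_2^+$, with $(-s,1-b)$ on the other), and no duality of the trilinear form exchanges these two configurations while preserving the multipliers $N_1^{-1}$ versus $N_0 N_1^{-1} N_2^{-1}$. The paper explicitly singles out $I_2^+$ as the harder estimate for $s\in(-13/18,-1/2)$ and proves it directly.

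\textbf{Proposition \ref{thm2.6} alone does not close the intermediate antipodal case.} Summing $A^{5/4}$ up to $M_1 \sim N_1/N_0$ gives a loss $(N_1/N_0)^{5/4}$; for $I_2^+$ the resulting coefficient is $N_0 N_1^{-2}(N_1/N_0)^{5/4}=N_0^{-1/4}N_1^{-3/4}$, and comparison with the target $N_0^{-s}N_1^{2s-\e}$ forces $-3/4-2s<0$, i.e.\ $s>-3/8$. (For $I_1^+$ your own expression $N_1^{-1}(N_1/N_0)^{5/4}N_0^{-s}=N_0^{-5/4-s}N_1^{1/4}$ is unbounded in $N_1$ for fixed $N_0$, so that case also fails.) The paper repairs this by \emph{splitting the intermediate antipodal sector according to whether $N_0^{7}\lesssim N_1^{5}$ or $N_0^{7}\gtrsim N_1^{5}$}. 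When $N_0^{7}\lesssim N_1^{5}$ one abandons Proposition \ref{thm2.6} and uses instead the bilinear Strichartz estimate Proposition \ref{thm2.2}, which contributes only $N_0^{1/2}(L_iL_j)^{1/2}$ with no $N_1/N_0$ loss; when $N_0^{7}\gtrsim N_1^{5}$ one keeps Proposition \ref{thm2.6}, and now $(N_1/N_0)^{5/4}\lesssim N_1^{5/14}$ is controlled. The crossover $N_0\sim N_1^{5/7}$ is exactly what produces the threshold $s=-13/18$ in the $I_2^+$ computation. Without this split your argument does not reach below $s=-3/8$.
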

\begin{proof}
We first note that if $N_1 \lesssim L_{\textnormal{max}}^{012}$ then (\ref{goal001-thm2.10}) and (\ref{goal002-thm2.10}) are obtained 
by the same proof as that of Theorem \ref{thm2.3}. Therefore we can assume $ L_{\textnormal{max}}^{012} \lesssim N_1$. 
We can also assume that $1 \ll N_0$. Indeed, if $N_0 \sim 1$ \eqref{goal001-thm2.10} and \eqref{goal002-thm2.10} are immediately 
obtained by using Proposition \ref{thm2.1} as $N_0 \sim1$. 

If $s \in (-3/4 , \, - 1/2)$, considering $N_0 \lesssim N_1 \sim N_2$, we observe that the latter estimate (\ref{goal002-thm2.10}) is difficult to show compared with the former one. Clearly, the proof of (\ref{goal001-thm2.10}) and (\ref{goal002-thm2.10}) 
become easier as $s$ gets greater. Therefore, we here focus our attention on proving  (\ref{goal002-thm2.10}) for $s \in (-3/4 , \, - 1/2)$. 

(\ref{goal002-thm2.10}) is equivalent to
\begin{equation}\label{goal003-thm2.10}
\begin{split}
\sum_{N_1} \sum_{N_0 \lesssim N_1 \sim N_2}  \sum_{L_j \lesssim N_1} 
N_0 N_1^{-2} 
 & \left| \int{ {f^{\pm, c}} (\tau_1 + \tau_2, \xi_1 + \xi_2) {g_1^-}(\tau_1, \xi_1) {g_2^+} (\tau_2, \xi_2) }d\tau_1 d\tau_2 d\xi_1 d \xi_2 \right|\\
&  
\lesssim \|f \|_{\ha{X}^{-s,\,1-b-\e}_{\pm,\,c}} \|g_1 \|_{\ha{X}^{s,\,b}_{-}}\|g_2 \|_{\ha{X}^{s,\,b}_{+}}. 
\end{split}
\end{equation}
Here we utilized the denotations ${f^{\pm, c}} := \chi_{K^{\pm, c}_{N_0,L_0}} f$, ${g_1^-} := \chi_{{K^{-}_{N_1,L_1}}}g_1$, ${g_2^+} := \chi_{{K^{+}_{N_2,L_2}}}g_2$.
For simplicity, we use 
\begin{equation*}
I(f, g, h) := N_0 N_1^{-2} \left| \int{ f (\tau, \xi) g(\tau_1, \xi_1) h (\tau_2, \xi_2) }d\tau_1 d\tau_2 d\xi_1 d \xi_2 \right|
\end{equation*}
where $\tau = \tau_1+\tau_2$ and $\xi =\xi_1 +\xi_2$.
By the decomposition of $\R^3 \cross \R^3$
\begin{align*}
\R^3 \cross \R^3 = & \bigcup_{\tiny{\substack{-M_0 \leq j_1,j_2 \leq M_0 -1\\|j_1 - j_2|\leq 16}}} 
{\mathfrak{D}}_{j_1}^{M_0} \cross {\mathfrak{D}}_{j_2}^{M_0} \cup \bigcup_{64 \leq A \leq M_0} \ \bigcup_{\tiny{\substack{-A \leq j_1,j_2 \leq A -1\\ 16 \leq |j_1 - j_2|\leq 32}}} 
{\mathfrak{D}}_{j_1}^A \cross {\mathfrak{D}}_{j_2}^A \\
\cup & \bigcup_{\tiny{\substack{-M_1 \leq j_1,j_2 \leq M_1 -1\\|j_1 - j_2 \pm M_1|\leq 16}}} 
{\mathfrak{D}}_{j_1}^{M_1} \cross {\mathfrak{D}}_{j_2}^{M_1} \cup \bigcup_{64 \leq A \leq M_1} \ 
\bigcup_{\tiny{\substack{-A \leq j_1,j_2 \leq A +1\\ 16 \leq |j_1 - j_2 \pm A |\leq 32}}} 
{\mathfrak{D}}_{j_1}^A \cross {\mathfrak{D}}_{j_2}^A.
\end{align*}
where $M_0$ and $M_1$ are the minimal dyadic number which satisfies respectively
\begin{equation*}
N_0^\frac{1}{2} \leq M_0 , \qquad 2^{7} (1-c)^{-\frac{1}{2}} \frac{(N_1 N_2)^{\frac{1}{2}}}{N_0} \leq M_1,
\end{equation*}
we only need to show
\begin{align*}
\textnormal{(I)} &  \sum_{N_0 \sim N_1 \sim N_2} \sum_{L_j \lesssim N_1}
\sum_{{\tiny{\substack{-M_0 \leq j_1,j_2 \leq M_0 -1\\|j_1 - j_2|\leq 16}}}} 
I({f^{\pm, c}}, {g_1^{-, M_0, j_1}}, {g_2^{+, M_0, j_2}}) 
\lesssim  \|f \|_{\ha{X}^{-s,\,1-b-\e}_{\pm,\,c}} \|g_1 \|_{\ha{X}^{s,\,b}_{-}}\|g_2 \|_{\ha{X}^{s,\,b}_{+}},\\
\textnormal{(I \hspace{-0.15cm}I)} &  \sum_{N_0 \sim N_1 \sim N_2} \sum_{L_j \lesssim N_1} \sum_{64 \leq A \leq M_0}
\sum_{{\tiny{\substack{-A \leq j_1,j_2 \leq A-1\\|j_1 - j_2|\leq 16}}}} 
I ({f^{\pm, c}}, {g_1^{-, A, j_1}}, {g_2^{+, A, j_2}}) 
\lesssim  \|f \|_{\ha{X}^{-s,\,1-b-\e}_{\pm,\,c}} \|g_1 \|_{\ha{X}^{s,\,b}_{-}}\|g_2 \|_{\ha{X}^{s,\,b}_{+}},\\
\textnormal{(I \hspace{-0.16cm}I \hspace{-0.16cm}I)} & \quad  \sum_{N_1} \sum_{1\ll N_0 \lesssim N_1 \sim N_2} \sum_{L_j \lesssim N_1}
\sum_{{\tiny{\substack{-M_1 \leq j_1,j_2 \leq M_1 -1\\|j_1 - j_2 \pm M_1|\leq 16}}}} 
I({f^{\pm, c}}, {g_1^{-, M_1, j_1}}, {g_2^{+, M_1, j_2}}) 
\lesssim  \|f \|_{\ha{X}^{-s,\,1-b-\e}_{\pm,\,c}} \|g_1 \|_{\ha{X}^{s,\,b}_{-}}\|g_2 \|_{\ha{X}^{s,\,b}_{+}},\\
\textnormal{(I \hspace{-0.18cm}V)} & \quad \sum_{N_1} \sum_{1\ll N_0 \lesssim N_1 \sim N_2} \sum_{L_j \lesssim N_1} \sum_{64 \leq A \leq M_1}
\sum_{{\tiny{\substack{-A \leq j_1,j_2 \leq A -1\\|j_1 - j_2 \pm A|\leq 16}}} } 
I({f^{\pm, c}}, {g_1^{-, A, j_1}}, {g_2^{+, A, j_2}}) 
\lesssim  \|f \|_{\ha{X}^{-s,\,1-b-\e}_{\pm,\,c}} \|g_1 \|_{\ha{X}^{s,\,b}_{-}}\|g_2 \|_{\ha{X}^{s,\,b}_{+}},
\end{align*}
where ${g_1^{-, A, j_1}} := g_1^-|_{{\mathfrak{D}}_{j_1}^A}$ and ${g_2^{+, A, j_2}} := g_2^+|_{{\mathfrak{D}}_{j_2}^A}$. We further simplify 
\textnormal{(I)}-\textnormal{(I \hspace{-0.18cm}V)}. From $\ell^2$ Cauchy-Schwarz inequality 
and $L_{\textnormal{max}}^{012} \lesssim N_1$, it suffices to show that there exists 
$0 < \e' < 1$ such that the following estimates hold;
\begin{align*}
\textnormal{(I)}' &  \ \sum_{{\tiny{\substack{-M_0 \leq j_1,j_2 \leq M_0 -1\\|j_1 - j_2|\leq 16}}}} 
I({f^{\pm, c}}, {g_1^{-, M_0, j_1}}, {g_2^{+, M_0, j_2}})  \lesssim  N_0^{s-\e'} (L_0 L_1 L_2)^{\frac{1}{2}} \|{f^{\pm, c}} \|_{{L_{\xi,\tau}^2}} \|{g_1^-}\|_{{L_{\xi,\tau}^2}} \|{g_2^+} \|_{{L_{\xi,\tau}^2}},\\
\textnormal{(I \hspace{-0.15cm}I)}' & \ \sum_{64 \leq A \leq M_0}
\sum_{{\tiny{\substack{-A \leq j_1,j_2 \leq A -1\\|j_1 - j_2|\leq 16}}}} 
I ({f^{\pm, c}}, {g_1^{-, A, j_1}}, {g_2^{+, A, j_2}}) \lesssim 
N_0^{s-\e'}  (L_0 L_1 L_2)^{\frac{1}{2}} \|{f^{\pm, c}} \|_{{L_{\xi,\tau}^2}}  \|{g_1^-}\|_{{L_{\xi,\tau}^2}} \|{g_2^+} \|_{{L_{\xi,\tau}^2}},\\
\textnormal{(I \hspace{-0.16cm}I \hspace{-0.16cm}I)}' & \ 
\sum_{{\tiny{\substack{-M_1 \leq j_1,j_2 \leq M_1 -1\\|j_1 - j_2 \pm M_1|\leq 16}}}} 
I({f^{\pm, c}}, {g_1^{-, M_1, j_1}}, {g_2^{+, M_1, j_2}}) \lesssim
N_0^{-s} N_1^{2s-\e'} (L_0 L_1 L_2)^{\frac{1}{2}} \| {f^{\pm, c}} \|_{{L_{\xi,\tau}^2}}  \|{g_1^-}\|_{{L_{\xi,\tau}^2}} \|{g_2^+} \|_{{L_{\xi,\tau}^2}},\\
\textnormal{(I \hspace{-0.18cm}V)}' & \ \sum_{64 \leq A \leq M_1}
\sum_{{\tiny{\substack{-A \leq j_1,j_2 \leq A -1\\|j_1 - j_2 \pm A|\leq 16}}} } 
I({f^{\pm, c}}, {g_1^{-, A, j_1}}, {g_2^{+, A, j_2}}) \lesssim
N_0^{-s} N_1^{2s-\e'} (L_0 L_1 L_2)^{\frac{1}{2}} \| {f^{\pm, c}} \|_{{L_{\xi,\tau}^2}}  \|{g_1^-}\|_{{L_{\xi,\tau}^2}} \|{g_2^+} \|_{{L_{\xi,\tau}^2}}.
\end{align*}
If $-3/4 < s$, \textnormal{(I)}$'$ is immediately established by using Proposition \ref{thm2.8}. 
\begin{align*}
&  \ \sum_{{\tiny{\substack{-M_0 \leq j_1,j_2 \leq M_0 -1\\|j_1 - j_2|\leq 16}}}} 
I({f^{\pm, c}}, {g_1^{-, M_0, j_1}}, {g_2^{+, M_0, j_2}}) \\
\sim & \sum_{{\tiny{\substack{-M_0 \leq j_1,j_2 \leq M_0 -1\\|j_1 - j_2|\leq 16}}}} N_0^{-1} 
\left| \int{ {f^{\pm, c}} (\tau, \xi) {g_1^{-, M_0, j_1}} (\tau_1, \xi_1) {g_2^{+, M_0, j_2}} (\tau_2, \xi_2) }d\tau_1 d\tau_2 d\xi_1 d \xi_2 \right|\\
\lesssim & \  N_0^{-\frac{3}{4}} (L_0 L_1)^{\frac{1}{2}}\|{f^{\pm, c}} \|_{{L_{\xi,\tau}^2}} 
\sum_{{\tiny{\substack{-M_0 \leq j_1,j_2 \leq M_0 -1\\|j_1 - j_2|\leq 16}}}}
\|{g_1^{-, M_0, j_1}} \|_{{L_{\xi,\tau}^2}} \|{g_2^{+, M_0, j_2}} \|_{{L_{\xi,\tau}^2}},\\
\lesssim & \ N_0^{-\frac{3}{4}} (L_0 L_1 L_2)^{\frac{1}{2}} \|{f^{\pm, c}} \|_{{L_{\xi,\tau}^2}}  \|{g_1^-}\|_{{L_{\xi,\tau}^2}} \|{g_2^+} \|_{{L_{\xi,\tau}^2}}.
\end{align*}
Next we prove \textnormal{(I \hspace{-0.15cm}I)}$'$. It follows from Proposition \ref{thm2.8} that
\begin{align*}
& \sum_{64 \leq A \leq M_0}
\sum_{{\tiny{\substack{-A \leq j_1,j_2 \leq A -1\\|j_1 - j_2|\leq 16}}}} 
I ({f^{\pm, c}}, {g_1^{-, A, j_1}}, {g_2^{+, A, j_2}})\\
\sim & \sum_{64 \leq A \leq M_0}
\sum_{{\tiny{\substack{-A \leq j_1,j_2 \leq A -1\\|j_1 - j_2|\leq 16}}}} 
N_0^{-1} 
\left| \int{ {f^{\pm, c}} (\tau, \xi) {g_1^{-, A, j_1}} (\tau_1, \xi_1) {g_2^{+, A, j_2}} (\tau_2, \xi_2) }d\tau_1 d\tau_2 d\xi_1 d \xi_2 \right|\\
\lesssim & \sum_{64 \leq A \leq M_0} N_0^{-1} A^{\frac{1}{2}} (L_0 L_1 L_2)^\frac{1}{2}
\| {f^{\pm, c}} \|_{{L_{\xi,\tau}^2}} \sum_{{\tiny{\substack{-A \leq j_1,j_2 \leq A -1\\|j_1 - j_2|\leq 16}}}}  \|{g_1^{-, A, j_1}} \|_{{L_{\xi,\tau}^2}} \|{g_2^{+, A, j_2}} \|_{{L_{\xi,\tau}^2}}\\
\lesssim & \ N_0^{-\frac{3}{4}} (L_0 L_1 L_2)^{\frac{1}{2}} \|{f^{\pm, c}} \|_{{L_{\xi,\tau}^2}}  \|{g_1^-}\|_{{L_{\xi,\tau}^2}} \|{g_2^+} \|_{{L_{\xi,\tau}^2}}.
\end{align*}
\textnormal{(I \hspace{-0.16cm}I \hspace{-0.16cm}I)}$'$ is verified as follows. By Lemma \ref{lem-modu1}, 
we have $N_0 \lesssim L_{\textnormal{max}}^{012}$. For the sake of simplicity, we here consider the case of $N_0 \lesssim L_0$. 
The other cases can be proved similarly. We deduce from Proposition \ref{thm2.1} and H\"{o}lder inequality that
\begin{align*}
& \sum_{{\tiny{\substack{-M_1 \leq j_1,j_2 \leq M_1 -1\\|j_1 - j_2 \pm M_1|\leq 16}}}} 
I({f^{\pm, c}}, {g_1^{-, M_1, j_1}}, {g_2^{+, M_1, j_2}})\\
\sim & N_0 N_1^{-2} \sum_{{\tiny{\substack{-M_1 \leq j_1,j_2 \leq M_1 -1\\|j_1 - j_2 \pm M_1|\leq 16}}}} 
\left| \int{ {f^{\pm, c}} (\tau, \xi) {g_1^{-, M_1, j_1}} (\tau_1, \xi_1) {g_2^{+, M_1, j_2}} (\tau_2, \xi_2) }d\tau_1 d\tau_2 d\xi_1 d \xi_2 \right|\\
\lesssim & N_0 N_1^{-2} \sum_{{\tiny{\substack{-M_1 \leq j_1,j_2 \leq M_1 -1\\|j_1 - j_2 \pm M_1|\leq 16}}}}  
\|\chi_{K^{\pm, c}_{N_0,L_0}} ({g_1^{-, M_1, j_1}} *  {g_2^{+, M_1, j_2}}) \|_{{L_{\xi,\tau}^2}} \|{f^{\pm, c}} \|_{{L_{\xi,\tau}^2}}\\
\lesssim & N_0N_1^{-2} N_1^{\frac{1}{4}} L_1^{\frac{1}{2}} L_2^{\frac{1}{4}} L_0^{\frac{1}{2}}
\|{f^{\pm, c}} \|_{{L_{\xi,\tau}^2}} \sum_{{\tiny{\substack{-M_1 \leq j_1,j_2 \leq M_1 -1\\|j_1 - j_2 \pm M_1|\leq 16}}}}  
 \|{g_1^{-, M_1, j_1}} \|_{{L_{\xi,\tau}^2}} \|{g_2^{+, M_1, j_2}} \|_{{L_{\xi,\tau}^2}}\\
\lesssim & N_0^{-s} N_1^{2s- \e'} (L_0 L_1 L_2)^{\frac{1}{2}}\|{f^{\pm, c}} \|_{{L_{\xi,\tau}^2}}  \|{g_1^-}\|_{{L_{\xi,\tau}^2}} \|{g_2^+} \|_{{L_{\xi,\tau}^2}}.
\end{align*}
Lastly, we prove \textnormal{(I \hspace{-0.18cm}V)}$'$. We use the two estimations depending on the relation between $N_0$ and $N_1$. 
Precisely, we utilize Proposition \ref{thm2.2} if $N_0^3 \lesssim N_1^2$, and if not so, we employ Proposition \ref{thm2.6}. 
We first assume $N_0^3 \lesssim N_1^2$. 
\begin{align*}
& \sum_{64 \leq A \leq M_1}
\sum_{{\tiny{\substack{-A \leq j_1,j_2 \leq A -1\\|j_1 - j_2 \pm A|\leq 16}}} } 
I({f^{\pm, c}}, {g_1^{-, A, j_1}}, {g_2^{+, A, j_2}})\\
\sim & N_0 N_1^{-2} \sum_{64 \leq A \leq M_1}
\sum_{{\tiny{\substack{-A \leq j_1,j_2 \leq A -1\\|j_1 - j_2 \pm A|\leq 16}}} } 
\left| \int{ {f^{\pm, c}} (\tau, \xi) {g_1^{-, A, j_1}} (\tau_1, \xi_1) {g_2^{+, A, j_2}} (\tau_2, \xi_2) }d\tau_1 d\tau_2 d\xi_1 d \xi_2 \right|\\
\lesssim & N_0 N_1^{-2} \sum_{64 \leq A \leq M_1}
\sum_{{\tiny{\substack{-A \leq j_1,j_2 \leq A -1\\|j_1 - j_2 \pm A|\leq 16}}} } 
\|\chi_{{K^{-}_{N_1,L_1}}} ({f^{\pm, c}} *  g_{2,-}^{+,A,j_2}) \|_{{L_{\xi,\tau}^2}} \|{g_1^{-, A, j_1}} \|_{{L_{\xi,\tau}^2}}\\
\lesssim & N_0N_1^{-2} N_0^{\frac{1}{2}}(L_0L_2)^{\frac{1}{2}} \sum_{64 \leq A \leq M_1}
\|{f^{\pm, c}} \|_{{L_{\xi,\tau}^2}}
\sum_{{\tiny{\substack{-A \leq j_1,j_2 \leq A -1\\|j_1 - j_2 \pm A|\leq 16}}} } 
 \|{g_1^{-, A, j_1}} \|_{{L_{\xi,\tau}^2}} \|{g_2^{+, A, j_2}} \|_{{L_{\xi,\tau}^2}}\\
\lesssim & N_0^{\frac{3}{2}} N_1^{-2+\e'} (L_0 L_1 L_2)^{\frac{1}{2}}\|{f^{\pm, c}} \|_{{L_{\xi,\tau}^2}}  \|{g_1^-}\|_{{L_{\xi,\tau}^2}} \|{g_2^+} \|_{{L_{\xi,\tau}^2}}\\
\lesssim & N_0^{\frac{3}{2}} N_1^{-2-2 s + 2 \e'} N_1^{2 s -\e'} (L_0 L_1 L_2)^{\frac{1}{2}}\|{f^{\pm, c}} \|_{{L_{\xi,\tau}^2}} 
 \|{g_1^-}\|_{{L_{\xi,\tau}^2}} \|{g_2^+} \|_{{L_{\xi,\tau}^2}}\\
\lesssim & N_0^{-s +3 \left( \e' - \frac{2}{3} \left(s + \frac{3}{4} \right) \right)}N_1^{2 s -\e'}
 (L_0 L_1 L_2)^{\frac{1}{2}}\|{f^{\pm, c}} \|_{{L_{\xi,\tau}^2}} \|{g_1^-}\|_{{L_{\xi,\tau}^2}} \|{g_2^+} \|_{{L_{\xi,\tau}^2}}.
\end{align*}
If $0 < \e' \leq \frac{2}{3} \left(s + \frac{3}{4} \right)$, this completes \textnormal{(I \hspace{-0.18cm}V)}$'$. 
We next assume $N_0^3 \gtrsim N_1^2$. 
From Proposition \ref{thm2.6} and $M_1 \sim N_1/N_0$, we observe that
\begin{align*}
& \sum_{64 \leq A \leq {M_1}}
\sum_{{\tiny{\substack{-A \leq j_1,j_2 \leq A -1\\|j_1 - j_2 \pm A|\leq 16}}} } 
I({f^{\pm, c}}, {g_1^{-, A, j_1}}, {g_2^{+, A, j_2}})\\
\sim & N_0 N_1^{-2} \sum_{64 \leq A \leq {M_1}}
\sum_{{\tiny{\substack{-A \leq j_1,j_2 \leq A -1\\|j_1 - j_2 \pm A|\leq 16}}} } 
\left| \int{ {f^{\pm, c}} (\tau, \xi) {g_1^{-, A, j_1}} (\tau_1, \xi_1) {g_2^{+, A, j_2}} (\tau_2, \xi_2) }d\tau_1 d\tau_2 d\xi_1 d \xi_2 \right|\\
\lesssim & N_0 N_1^{-2} \sum_{64 \leq A \leq {M_1}} A^{\frac{7}{8}} (L_0 L_1 L_2)^{\frac{1}{2}} 
\|{f^{\pm, c}} \|_{{L_{\xi,\tau}^2}}
\sum_{{\tiny{\substack{-A \leq j_1,j_2 \leq A -1\\|j_1 - j_2 \pm A|\leq 16}}} } 
 \|{g_1^{-, A, j_1}} \|_{{L_{\xi,\tau}^2}} \|{g_2^{+, A, j_2}} \|_{{L_{\xi,\tau}^2}}\\
\lesssim & N_0 N_1^{-2} N_1^{\frac{7}{8}} N_0^{- \frac{7}{8}}
(L_0 L_1 L_2)^{\frac{1}{2}}\|{f^{\pm, c}} \|_{{L_{\xi,\tau}^2}} \|{g_1^-}\|_{{L_{\xi,\tau}^2}} \|{g_2^+} \|_{{L_{\xi,\tau}^2}}\\
\lesssim & N_0^{\frac{1}{8}} N_1^{-\frac{9}{8}}(L_0 L_1 L_2)^{\frac{1}{2}}\|{f^{\pm, c}} \|_{{L_{\xi,\tau}^2}} \|{g_1^-}\|_{{L_{\xi,\tau}^2}} \|{g_2^+} \|_{{L_{\xi,\tau}^2}}\\
\lesssim & N_0^{-s} N_0^{s+\frac{1}{8}} N_1^{-\frac{9}{8}}(L_0 L_1 L_2)^{\frac{1}{2}}\|{f^{\pm, c}} \|_{{L_{\xi,\tau}^2}} \|{g_1^-}\|_{{L_{\xi,\tau}^2}} \|{g_2^+} \|_{{L_{\xi,\tau}^2}}\\
\lesssim & N_0^{-s} N_1^{2 s - \left( \frac{4}{3} s + \frac{25}{24} \right)}
(L_0 L_1 L_2)^{\frac{1}{2}}\|{f^{\pm, c}} \|_{{L_{\xi,\tau}^2}} \|{g_1^-}\|_{{L_{\xi,\tau}^2}} \|{g_2^+} \|_{{L_{\xi,\tau}^2}}.
\end{align*}
This completes the proof of \textnormal{(I \hspace{-0.18cm}V)}$'$. 
\end{proof}
\section{Negative result}
In this section, we establish Theorem \ref{not-c2}. For convenience, we restate the Theorem \ref{not-c2}.
\begin{thm}\label{not-c2-2}
Let $d=2$, $0<c<1$ and $s < - \frac{3}{4}$. Then for any $T>0$, the data-to-solution map 
$ (u_0, u_1, n_0, n_1) \mapsto (u, n)$ 
of \eqref{KGZ}, as a map from the unit ball in 
$ H^{s+1} \cross H^s \cross H^s \cross H^{s-1}$ to 
$C([0,T]; H^{s+1}) \cap C^1([0,T];H^s) \cross C([0,T]; H^s) \cap  
C^1 ([0,T];H^{s-1})$ fails to be $C^2$.
\end{thm}
\begin{proof}
By the same argument as in the proof of Theorem 1.4 in \cite{Ho}, it suffices to prove that for every $C>0$, there exist real-valued functions $u_0 \in H^{s+1}$ and 
$n_0 \in H^s$ such that
\begin{align}
\sup_{0 < t \leq T}
\left\| \int_0^t{ \frac{\sin{((t-t') \LR{\nabla})}}{\LR{\nabla}} \left(  
(\cos{(t' \LR{\nabla})}u_0) (\cos{(t'|\nabla|)} n_0) \right)}dt' \right\|_{H^{s+1}} & \notag \\
 \geq C \|u_0\|_{H^{s+1}} & \|n_0\|_{{H}^s}.\label{c2-comp}
\end{align}
Let $N \gg 1$, we define the sets $D_1$, $D_2$, $D_3 \subset \R^2$ as
\begin{align*}
D_1 & := [N, \, N+1] \cross [-N^{\frac{1}{2}}, \, N^{\frac{1}{2}}] \cup  
 [-N -1, \, -N] \cross [-N^{\frac{1}{2}}, \, N^{\frac{1}{2}}],\\
D_2 & := \left[\frac{2N}{1-c}, \, \frac{2N}{1-c}+1\right] 
\cross [-N^{\frac{1}{2}}, \, N^{\frac{1}{2}}] \cup 
 \left[- \frac{2N}{1-c} -1, \, - \frac{2N}{1-c} \right] \cross [-N^{\frac{1}{2}}, \, N^{\frac{1}{2}}],\\
D_3 & := \left[\frac{1+c}{1-c}N -1, \, \frac{1+c}{1-c}N+1\right] 
\cross [-N^{\frac{1}{2}}, \, N^{\frac{1}{2}}] \cup 
 \left[- \frac{1+c}{1-c}N -1, \, - \frac{1+c}{1-c} N +1\right] \cross [-N^{\frac{1}{2}}, \, N^{\frac{1}{2}}].\\
\end{align*}
We set $(u_{N,0}, n_{N,0})$ as
\begin{equation*}
 (\F_x u_{N,0}) (\xi) := N^{-s-\frac{5}{4}}\chi_{D_1} (\xi), \quad 
(\F_x n_{N,0}) (\xi) := N^{-s-\frac{1}{4}}\chi_{D_2}(\xi).
\end{equation*}
We easily verify that $u_{N,0}$, $n_{N,0}$ are real-valued and $\|u_{N,0} \|_{H^{s+1}} \sim 1$, $\| n_{N,0} \|_{{H}^s} \sim 1$. A simple calculation gives
\begin{align}
\F_x & \left( \int_0^t{ \frac{\sin{((t-t') \LR{\nabla})}}{\LR{\nabla}} \left(  
(\cos{(t' \LR{\nabla})}u_{N,0}) (\cos{(t'|\nabla|)} n_{N,0}) \right)}dt' \right)\chi_{D_3}(\xi) \notag\\
\sim & N^{-2 s - \frac{5}{2}} \int_0^t \int_{\R^2} \left( e^{-i(t-t') \LR{\xi}} - e^{i(t-t') \LR{\xi}} \right) 
\chi_{D_3}(\xi) \cross \notag \\ 
&  {\left(  
((e^{-ict' \LR{\xi_1}} - e^{ict' \LR{\xi_1}})) \chi_{D_1}(\xi_1) ( e^{-it'|\xi-\xi_1|} - e^{it'|\xi -\xi_1|} n_{N,0})
\chi_{D_2} (\xi-\xi_1) \right)}d\xi_1 dt' \notag \\
\sim & N^{-2s - 2}\left( e^{-it \frac{1+c}{1-c} N} - e^{it \frac{1+c}{1-c} N} \right) t \, \chi_{D_3}(\xi) 
+ N^{-2s -2} \mathcal{O}(t^2)\chi_{D_3} (\xi), \label{c2-first}
\end{align}
for $N^{-1} \leq t \ll 1$. For any sufficient small $t>0$, we can choose $N \gg 1$ such that
\begin{equation*}
N^{-1} \leq t \ll 1 \quad \textnormal{and} \quad \left| e^{-it \frac{1+c}{1-c} N} - e^{it \frac{1+c}{1-c} N} \right| 
\geq \frac{1}{2}.
\end{equation*}
Thus we can choose $0<t<T$, $N \gg 1$ such that the first term of \eqref{c2-first} dominates the second term.
\begin{align*}
& \left\| \int_0^t{ \frac{\sin{((t-t') \LR{\nabla})}}{\LR{\nabla}} \left(  
(\cos{(t' \LR{\nabla})}u_{N,0}) (\cos{(t'|\nabla|)} n_{N,0}) \right)}dt' \right\|_{H^{s+1}}\\
\geq & \left\| \LR{\xi}^{s+1} \chi_{D_3} \F_x \left(
 \int_0^t{ \frac{\sin{((t-t') \LR{\nabla})}}{\LR{\nabla}} \left(  
(\cos{(t' \LR{\nabla})}u_{N,0}) (\cos{(t'|\nabla|)} n_{N,0}) \right)}dt' \right) \right\|_{L^2}\\
\sim & N^{-s-1} \| \chi_{D_3} \|_{L^2} \sim N^{-s -\frac{3}{4}}.
\end{align*}
This completes the proof of \eqref{c2-comp}.
\end{proof}

\section*{Acknowledgments} The author appreciates Professor M. Sugimoto, Professor K. Tsugawa and Professor I. Kato for giving many useful advices to the author. The author is supported by Grant-in-Aid for JSPS Research Fellow 16J11453.



\begin{thebibliography}{10}
\bibitem{BHHT}I. Bejenaru, S. Herr, J. Holmer, and D. Tataru, {\itshape On the 2D Zakharov system with $L^2$ Schr\"{o}dinger data}, Nonlinearity {\bfseries 22} (2009), 1063--1089.
\bibitem{BH}I. Bejenaru, S. Herr, {\itshape Convolutions of singular measures and applications to the Zakharov system}, J. Funct. Anal. {\bfseries 261} (2011), 478--506.
\bibitem{BHT}I. Bejenaru, S. Herr, and D. Tataru, {\itshape A convolution estimate for two-dimensional hypersurfaces}, Rev. Mat. Iberoam. {\bfseries 26}  (2010), 707--728.
\bibitem{Bellan}P.M. Bellan, {\itshape Fundamentals of Plasmas Physics}, Cambridge University Press, Cambridge, 2006.
\bibitem{BKW}J. Bennett, A. Carbery and J. Wright, {\itshape A non-linear generalization of the Loomis-Whitney inequality and applications}, Math. Res. Lett. {\bfseries 12} (2005), 443--457. 
\bibitem{GTV}J. Ginibre, Y. Tsutsumi, and G. Velo, {\itshape On the Cauchy problem for the Zakharov system}, J. Funct. Anal. {\bfseries 151} (1997), no. 2, 384--436.
\bibitem{Ho}J. Holmer, {\itshape Local ill-posedness of the 1D Zakharov system}, Electron. J. Diff. Equations {\bfseries 24} (2007), 22pp. 
\bibitem{Ka}I. Kato, {\itshape Well-posedness for the Cauchy problem of the Klein-Gordon-Zakharov system in four and more spatial dimensions}, Comm. Pure. Appl. Anal. {\bfseries 15} (2016), no. 6, 2247--2280.  
\bibitem{KPV}C. Kenig, G. Ponce, and L. Vega, {\itshape A bilinear estimate with applications to the KdV equation}, J. Amer. Soc. {\bfseries 9} (1996), 573--603.
\bibitem{KT1}H. Koch, D. Tataru, {\itshape Dispersive estimates for principally normal pseudodifferential operators}, Comm. Pure. Appl. Math. {\bfseries 58} (2) (2005), 217--284. 
\bibitem{L0}H. Lindblad, {\itshape A sharp counterexample to the local existence of low-regularity solutions to nonlinear wave equations }, Duke Math.J. {\bfseries 72} (1993), 503--539. 
\bibitem{L1}H. Lindblad, {\itshape Counterexamples to local existence for semi-linear wave equations}, Amer. J. Math.  {\bfseries 118} (1996), 1--16. 
\bibitem{MN0}N. Masmoudi, K. Nakanishi, {\itshape Energy convergence for singular limits of Zakharov type systems}, Invent. Math. {\bfseries 172} (2008), 535--583. 
\bibitem{OTT2}T. Ozawa, K. Tsutaya, and Y. Tsutsumi, {\itshape Well-posedness in energy space for the Cauchy problem of the Klein-Gordon-Zakharov equations with different propagation speeds in three space dimensions}, Math. Ann. {\bfseries 313} (1999), no. 1, 127--140. 
\bibitem{Sel2}S. Selberg, {\itshape Bilinear Fourier restriction estimates related to the 2D wave equation}, Adv. Diff. Eq.  {\bfseries 16} (2011), no. 7--8, 667--690. 
 \bibitem{Tao}T. Tao, {\itshape Nonlinear Dispersive Equations: Local and Global Analysis}, AMS (2006).
\bibitem{Tsugawa}K. Tsugawa, {\itshape Time local well-posedness of the coupled system of nonlinear wave equations with different propagation speeds},  S\={u}rikaisekikenky\={u}sho K\={o}ky\={u}roku No. 1235 (2001), 61--90.
\end{thebibliography}
\end{document}